\newif\ifshowkeys
\newcommand{\LA}	{{\mathbb{A}}}
\newcommand{\LB}	{{\mathbb{B}}}
\newcommand{\LC}	{{\mathbb{C}}}
\newcommand{\LT}        {{\mathbb{T}}}
\newcommand{\LL}        {{\mathbb{L}}}
\newcommand{\LQ}        {{\mathbb{Q}}}
\newcommand{\CC}        {{\mathcal{C}}}
\newcommand{\CD}        {{\mathcal{D}}}
\newcommand{\CE}        {{\mathcal{E}}}
\newcommand{\CF}        {{\mathcal{F}}}
\newcommand{\CN}        {{\mathcal{N}}}
\newcommand{\CP}        {{\mathcal{P}}}
\newcommand{\CS}        {{\mathcal{S}}}
\newcommand{\CT}        {{\mathcal{T}}}
\newcommand{\fp}        {{\mathfrak{p}}}
\newcommand{\fq}        {{\mathfrak{q}}}
\newcommand{\ft}        {{\mathfrak{t}}}
\newcommand{\fm}        {{\mathfrak{m}}}
\newcommand{\mbu}         {\mathbbm{1}}
\newcommand{\bc}[1]	{\langle #1\rangle}
\newcommand{\da}       {\text{-}}
\newcommand{\holim}  {\operatornamewithlimits{\underset{\longleftarrow}{holim}}}
\newtheorem{theorem}{Theorem}[section]
\newtheorem{conjecture}[theorem]{Conjecture}
\newtheorem*{conjecture*}{Conjecture}
\newtheorem{lemma}[theorem]{Lemma}
\newtheorem{proposition}[theorem]{Proposition}
\newtheorem{corollary}[theorem]{Corollary}
\theoremstyle{definition}
\newtheorem{remark}[theorem]{Remark}
\newtheorem{definition}[theorem]{Definition}
\newtheorem{example}[theorem]{Example}
\numberwithin{equation}{theorem}
\theoremstyle{theorem}
\newtheorem*{theorem*}{Theorem}
\definecolor{darkred}{rgb}{0.5,0,0}
\definecolor{darkgreen}{rgb}{0,0.5,0}
\newverbcommand{\mcode}{\color{darkred}}{}
\newverbcommand{\fname}{\color{darkgreen}}{}
\newcommand\blfootnote[1]{%
  \begingroup
  \renewcommand\thefootnote{}\footnote{#1}%
  \addtocounter{footnote}{-1}%
  \endgroup
}
\title{A conjecture on the composition of localizations on a stratified tensor triangulated category}
\author{Nicola~Bellumat}
\begin{document}
\blfootnote{This research was supported by grant no.~DNRF156 from the Danish National Research Foundation and by CEMS.UL - Center for Mathematical Studies, University of Lisbon, FCT Portugal, UID/04561/2025.} 
\maketitle

\begin{abstract}
We study the composition of Bousfield localizations on a tensor triangulated category stratified via the Balmer-Favi support and with noetherian Balmer spectrum. Our aim is to provide reductions via purely axiomatic arguments, allowing us general applications to concrete categories examined in mathematical practice. We propose a conjecture which states that the behaviour of the composition of the localizations depends on the chains of inclusions of the Balmer primes indexing said localizations. We prove this conjecture in the case of finite or low dimensional Balmer spectra.    
\end{abstract}

\section{Introduction}
Our starting point is \cite{itloc}: in this paper we considered a collection of Bousfield classes labelled by a finite totally ordered set and showed that under reasonable assumptions all the possible compositions of localizations with respect to joins of the examined Bousfield classes are well-behaved. We provided a diagram displaying all these compositions and relating them by canonical natural transformations arising from the universal properties of the localizations. This diagram is indexed over a finite poset and we can define a binary operation on it which keeps track of the composition of the localizations. Such a construction allows us great control on the localizations and we can reduce the computations of their compositions to combinatorics of sets.

The main application was chromatic homotopy theory: our preferred category was $\CS p$, the stable homotopy category, and the Bousfield classes we had in mind were the ones individuated by the Morava $K$-theories $K(i)$ for $i\leq n$, since we had to impose an upper bound to the chromatic height. However, the final proof could be reduced to purely categorical arguments without any computation relying on the specific properties of these cohomology theories.

In the present work, we examine what happens if, instead of a finite totally ordered set, the Bousfield classes are indexed over a possibly non-linear poset. The first question is to find natural occurrences of this situation, so to have available concrete examples and guarantee interesting applications. This is why the theory of stratification is introduced: it correlates the classification of the localizing tensor ideals of a tensor triangulated category to a space encapsulating the geometric properties of this category  by means of a suitable notion of ``support". Such a classification provides both a meaningful set of Bousfield classes and an indexing set with some structure condensing their intrinsic relation to one another.  

The exact details of the theory of stratification vary depending on the authors and sources considered. One of its first appearances was in the context of modular representation theory using support varieties, thanks to the work of Quillen, Carlson, Rickard, and others. Other crucial results were developed by Benson, Iyengar, and Krause, who defined a notion of support on a compactly generated tensor triangulated category admitting an action of a graded noetherian ring (see \cite{bik08} and \cite{bik}). The approach we will follow in this work is the one recently provided by Barthel, Heard and Sanders in \cite{BHS-stratification}: they use the notion of support developed by Balmer and Favi in \cite{balmerfavi}, which extends the Balmer support previously defined (in \cite{balmer-spectrumprimes}) only for the compact objects of a compactly generated category. This version not only allows us to recover the previous chromatic example by considering $L_n \CS p$, the localization of the stable homotopy category with respect to the $n$-th Johnson-Wilson theory, it also provides an elegant formulation of the notion of stratification which does not rely on the action of a group or a graded ring on the category in question.

Therefore, our basic set-up will be the following: let $(\CT, \otimes, \mbu)$ be a rigid-compactly generated tensor triangulated category with weakly noetherian Balmer spectrum $\text{Spc}(\CT^c)$ and stratified via the Balmer-Favi support. For any Balmer prime $\fp\in \text{Spc}(\CT^c)$ we can form an idempotent $g(\fp)$ representing the support at $\fp$. For a general subset $A\subseteq \text{Spc}(\CT^c)$ we define $\LL_A$ to be the Bousfield localization with respect to the coproduct of the objects $g(\fp)$ for $\fp\in A$, i.e.\ $\LL_A$ is the localization whose kernel is the localizing subcategory $\{t\in \CT : \coprod_{\fp \in A}g(\fp)\otimes t=0\}$. Given a $k$-tuple of subsets $\LA=(A_1,\dots, A_k)$ we define $\LL_{\LA}$ to be the composition $\LL_{A_1}\LL_{A_2}\dots \LL_{A_k}$. These iterated localizations will be the principal object of our study. 

The crucial point becomes what is the relation between these localizations, their compositions and the topology of the Balmer spectrum: while  the subspace structure of $A\subseteq \text{Spc}(\CT^c)$ reveals much information about the single localization $\LL_{A}$, it is much more difficult to deduce the properties of $\LL_{\LA}$ from the tuple $\LA$. The composition of two (or more) localizations is not necessarily again a localization. Hence, we cannot expect to classify it by means of its kernel, thus relating it to a subset of the Balmer spectrum by stratification. Even if the stratification gives a complete classification of the localizing tensor ideals of the category, it does not provide a direct description of the endofunctors obtained by composing the localizations with respect to these localizing ideals. The monoid generated by such endofunctors under composition presents quite a complicated behaviour. Nevertheless, it is worth understanding it since iterated localizations appear in multiple and various contexts, representing gluing data between objects. For a famous example, consider the Chromatic Splitting Conjecture of Hopkins.

The following question arises: are the compositions of the localizations determined uniquely by the Balmer spectrum? We propose a conjecture which tries to provide a positive answer to this question, by giving a sufficient condition for two iterated localizations $\LL_{\LA}$ and $\LL_{\LB}$ being isomorphic based on the combinatorics of the tuples $\LA$ and $\LB$.

We will need the following notions. If $\LA=(A_1,\dots, A_k)$ is a $k$-tuple of subsets of $\text{Spc}(\CT^c)$, then a \textit{thread} of $\LA$ is a sequence of inclusions of Balmer primes $\fp_1\supseteq \fp_2\supseteq \dots \supseteq \fp_k$ such that $\fp_i\in A_i$. A \textit{thread set} for $\LA$ is a chain $T\subseteq \text{Spc}(\CT^c)$ such that $(T\cap A_1,T\cap A_2, \dots, T\cap A_k)$ admits a thread, i.e. there exists a sequence of inclusions $\fp_1\supseteq \dots \supseteq \fp_k$ with $\fp_i\in T\cap A_i$.

Finally we can state our conjecture
\begin{conjecture*}
Let $\CT$ be a rigid-compactly generated tensor triangulated category. Suppose it is stratified in the sense of \cite[Def.~4.4]{BHS-stratification}, its Balmer spectrum is noetherian and of finite Krull dimension.

Let $\LA$ and $\LB$ be two tuples of subsets of $\emph{Spc}(\CT^c)$. Suppose these tuples have the same thread sets. Then, there is a canonical isomorphism between the associated iterated localizations $\LL_{\LA}\cong \LL_{\LB}$.
\end{conjecture*}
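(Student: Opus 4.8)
The plan is to reduce the computation of $\LL_{\LA}$ to a homotopy limit over a poset that is manifestly manufactured from the thread sets, and to set this up by induction on the Krull dimension of $\text{Spc}(\CT^c)$ (for a finite spectrum one instead inducts on the number of points, sidestepping the convergence issue noted at the end). First I would establish a small calculus of iterated localizations. Stratification gives $\LL_A X=0$ iff $\supp(X)\cap A=\emptyset$, and the $\LL_A$-local objects are exactly those right orthogonal to the localizing tensor ideal of objects supported on the complement of $A$. Two consequences are used throughout: (i) if $B\subseteq A$ then $\LL_A\LL_B=\LL_B\LL_A=\LL_B$, so in any tuple the larger of two comparable adjacent entries may be deleted without changing $\LL_{\LA}$, and — as one checks on thread-spans — without changing the thread sets; (ii) $\LL_A$ is a smashing localization exactly when $A$ is closed under generization, and for general $A$ there is a fracture square of arithmetic type $\LL_{\overline A}X\simeq\LL_A X\times_{\LL_{A'}\LL_A X}\LL_{A'}X$, with $\overline A$ the generization-closure of $A$ and $A'=\overline A\sm A$, the handedness (which localization is applied last) being dictated by the specialization order — exactly as in a thread $\fp_1\supseteq\dots\supseteq\fp_k$ the prime attached to the outermost localization $\LL_{A_1}$ is the most generic one. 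I would also note that the family of thread sets of $\LA$ is upward closed among chains of $\text{Spc}(\CT^c)$, hence is recorded by its minimal members (the thread-spans), and that combinatorially a chain $\fq_1\supsetneq\dots\supsetneq\fq_m$ is a thread-span of $\LA$ iff $\{1,\dots,k\}$ splits into consecutive nonempty blocks $I_1\mid\dots\mid I_m$ with $\fq_j\in\bigcap_{i\in I_j}A_i$; so ``$\LA$ and $\LB$ have the same thread sets'' is a purely combinatorial hypothesis, and the goal becomes a normal form for $\LA$ modulo $\LA\sim\LB\iff\LL_{\LA}\cong\LL_{\LB}$ that depends on that datum alone.

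For the induction, the base case $d=0$ is immediate: $\text{Spc}(\CT^c)$ is finite discrete, $\CT\simeq\prod_\fp\CT_\fp$, every $\LL_A$ is the projection onto $\prod_{\fp\in A}\CT_\fp$, all localizations commute, $\LL_{\LA}$ is the projection onto $\prod_{\fp\in\bigcap_iA_i}\CT_\fp$, and since the one-point chain $\{\fp\}$ is a thread set of $\LA$ precisely when $\fp\in\bigcap_iA_i$ this is visibly a functor of the thread sets. For the inductive step I would peel off a closed point $\fm$: the Mayer--Vietoris square at $\fm$ (valid since $\{\fm\}$ is Thomason) — and, since fracture squares are preserved by exact functors, the induced square on the values $\LL_{\LA}X$ — exhibits $\LL_{\LA}X$ as a pullback built from the image of $\LL_{\LA}$ in the localization of $\CT$ away from $\fm$ (fewer points, or lower dimension once the full closed-point layer is peeled, so controlled by the inductive hypothesis), the $\fm$-complete contribution (governed, via Greenlees--May duality, by an iterated localization on the $\fm$-torsion category $\Gm_{\{\fm\}}\CT$, whose spectrum is the single point $\fm$), and the comparison between the two. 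Feeding in the calculus of the previous step — each $\LL_{A_i}$ sees the fracture at $\fm$ only through whether $\fm\in A_i$ and through the generizations of $\fm$ meeting $A_i$ — one rewrites the whole composite $\LL_{\LA}$ as a homotopy limit of shorter or lower-dimensional composites, indexed by data extracted from the chains of $\text{Spc}(\CT^c)$ through $\fm$; the point to verify is that this indexing registers each such chain only through whether it is a thread set of $\LA$. Running this for $\LA$ and $\LB$ in parallel, and using the universal properties of the Bousfield localizations to make all comparison maps canonical (as with the poset-indexed diagram of canonical natural transformations in \cite{itloc}), produces the asserted canonical isomorphism $\LL_{\LA}\cong\LL_{\LB}$.

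The hard part is twofold. The technical heart is the interaction of the non-smashing localizations $\LL_{A_i}$ — which near the closed points are completions — with the recollement of $\CT$ along a closed point: completion and torsion at the \emph{same} prime do not commute, so $\LL_{\LA}$ is genuinely order-sensitive and the fracture squares must be assembled in precisely the order forced by the specialization order on $\text{Spc}(\CT^c)$; this is exactly the step at which the combinatorics of threads, rather than just the supports $A_i$, becomes unavoidable, and where the handedness of the fracture squares is used. The decisive obstacle, however, is convergence: the identification of the homotopy limit with $\LL_{\LA}$ is immediate when $\text{Spc}(\CT^c)$ is finite (the limit is finite) or of low Krull dimension (the reassembly is a bounded fracture, as for the arithmetic fracture square), but in general the analogous reassembly can fail to recover $\LL_{\LA}$, in the same way that chromatic convergence fails for dissonant spectra — which is precisely why the present argument is confined to finite or low-dimensional Balmer spectra and the general statement remains conjectural.
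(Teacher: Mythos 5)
First, note that the statement you are proving is labelled a \emph{conjecture} in the paper: the paper proves it only for finite Balmer spectra, dimension $1$, and dimension $2$ with finitely many minimal primes, and you likewise end by conceding that your reassembly step may fail to converge. So neither side has a complete proof, and the comparison must be against the paper's partial results and reduction lemmas.

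Where the two sketches do overlap, they agree: your observations (i) and (ii) and the passage to thread-spans correspond closely to the paper's operators $\gamma,\delta$, Proposition~\ref{prop-LA=LdeltaA-sectionreduction} and Lemma~\ref{lem-fracturecube-sectionreduction}, and your dimension-$0$ base case is Proposition~\ref{prop-conjtruedim0-sectionlowdim}. The genuine gap is in the inductive scheme. You propose to peel off a \emph{closed point} $\fm$ (a minimal Balmer prime), using a torsion/completion recollement at $\fm$, and to descend in dimension ``once the full closed-point layer is peeled.'' But a noetherian Balmer spectrum of dimension $1$ can already have infinitely many closed points — $D(\mathbb{Z})$, or $\TT\da\CS p_{\QQ}$ as in Example~\ref{ex-counterexformulaloclim-sectionfinite} — so peeling closed points one at a time never terminates, and peeling the whole layer at once is an infinite gluing, which Example~\ref{ex-counterexformulaloclim-sectionfinite} shows can fail ($\LL_AX\not\cong\holim_{C\in s(A)'}\varphi_CX$ for infinite $A$). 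Hence as written your induction would not recover even the paper's dimension-$1$ theorem. The paper's extra ingredient, which your sketch is missing, is that a noetherian spectral space has only \emph{finitely many maximal primes} (Corollary~\ref{cor-finmanyirrednoethsectionlowdim}); it therefore peels off maximal rather than minimal primes, covering $\text{Spc}(\CT^c)$ by finitely many generization-closed pieces such as $U_i=[\not\subseteq\ft_i]$, reducing to the smashing localizations $L_Z\CT$ over a \emph{finite} punctured cube via Lemma~\ref{lem-conjecturereducedthomasoncomplements-sectionlowdim}. Because these auxiliary localizations are all smashing, the completion/torsion non-commutativity you rightly flag as the technical heart never enters the gluing; the irreducible dimension-$1$ case (Proposition~\ref{prop-conjtruedim1irred-sectionlowdim}) is then a finite case analysis of collapsed concatenated normal forms rather than a further fracture. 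Your strategy would need to be reorganized along these lines — finite covers by generization-closed sets, governed by the finitude of irreducible components — to reproduce the cases the paper actually settles.
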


This Conjecture is a direct generalization of the result \cite[Prop.~1.10]{itloc}, where we considered a finite linear order. In the case of \cite{itloc} the fracture cube argument allowed us to decompose a localization $\LL_A$ as homotopy limit of finitely many localizations $\LL_{A'}$ for subsets $A'\subset A$ satisfying the right conditions. In the current context, where we may want to consider infinite posets, this argument is not enough to encase all the possible iterated localizations in a diagram relating them as in \cite[Thm.~1.8]{itloc}. However, even if this strategy seems not to be easily adapted, all the results we could prove and all the examples we could compute seem to confirm the Conjecture. Indeed, we could prove the Conjecture in the following situations
\begin{theorem*}
Assume the tensor triangulated category $\CT$ admits a model. Then, the Conjecture holds in the following cases:
\begin{itemize}
\item[(a)] if $\emph{Spc}(\CT^c)$ is finite;
\item[(b)] if $\emph{Spc}(\CT^c)$ has dimension 1;
\item[(c)] if $\emph{Spc}(\CT^c)$ has dimension 2 and has finitely many minimal primes.
\end{itemize}
\end{theorem*}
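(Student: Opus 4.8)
The plan is to reduce the Conjecture, in each of the three cases, to the linearly ordered situation already settled in \cite[Prop.~1.10]{itloc}. For a chain $T\subseteq\mathrm{Spc}(\CT^c)$ write $\LA_T$ for the tuple $(A_1\cap T,\dots,A_k\cap T)$; since each $A_i\cap T$ is a subset of the finite linear order $T$, the result \cite[Prop.~1.10]{itloc} applies to the Bousfield localizations at the joins $\bigoplus_{\fp\in A_i\cap T}g(\fp)$ and shows that $\LL_{\LA_T}$ depends only on the thread sets of $\LA_T$ internal to $T$, which are exactly the thread sets of $\LA$ contained in $T$. Consequently, if $\LA$ and $\LB$ have the same thread sets, then $\LL_{\LA_T}\cong\LL_{\LB_T}$ canonically for \emph{every} chain $T$. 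It therefore suffices to establish a \emph{reconstruction formula} of the shape $\LL_{\LA}\simeq\holim_{T}\LL_{\LA_T}$, where the homotopy limit is indexed by a poset built solely from the thread sets of $\LA$ --- for instance the maximal thread sets together with their iterated intersections, organised into a \v{C}ech-type diagram. Granting such a formula, applying it to $\LA$ and to $\LB$ and comparing the two homotopy limits over the common indexing poset produces the canonical isomorphism $\LL_{\LA}\cong\LL_{\LB}$; so the entire content is the reconstruction formula, and the role of the hypotheses (a)--(c) is precisely to make the homotopy limit occurring in it tractable.

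When $\mathrm{Spc}(\CT^c)$ is finite, every $A_i$ is a finite union of Balmer primes and the reconstruction formula follows by transcribing the fracture-cube construction of \cite[Thm.~1.8]{itloc} from the finite linear order used there to the finite poset $\mathrm{Spc}(\CT^c)$: one resolves each $\LL_{A_i}$ as a finite homotopy limit of the localizations $\LL_{\{\fp\}}$ (and their joins) via the punctured-cube fracture, assembles the resulting finite diagram of iterated localizations, and uses the binary composition operation on the indexing poset to identify $\LL_{\LA}$ with $\holim_{T}\LL_{\LA_T}$; every homotopy limit in sight is finite, so this is essentially the bookkeeping of \cite{itloc} with ``chain'' replaced by ``finite poset''. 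For the low-dimensional cases one argues by induction on the Krull dimension, peeling off one stratum at a time. In dimension one, decompose each subset $A=A_{\mathrm{gen}}\sqcup A_{\mathrm{cl}}$ into its minimal-prime part and its closed-point part: then $\LL_{A_{\mathrm{gen}}}$ is a finite smashing localization whose target has dimension zero, $\LL_{A_{\mathrm{cl}}}$ is a product of completion-type localizations, and an arithmetic-fracture square exhibits $\LL_A$ as the pullback of $\LL_{A_{\mathrm{gen}}}$ and $\LL_{A_{\mathrm{cl}}}$ over $\LL_{A_{\mathrm{gen}}}\LL_{A_{\mathrm{cl}}}$. Iterating over the $k$ slots presents $\LL_{\LA}$ as a homotopy limit whose vertices either live in the zero-dimensional (discrete) situation obtained by localizing at the minimal primes, where the iterated localization collapses to a single localization at the intersection of the generic parts --- manifestly a function of the thread sets --- or are attached to a chain through a single closed point, hence of length at most two and governed by \cite[Prop.~1.10]{itloc}; this yields the reconstruction formula. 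In dimension two one first peels off the generic stratum: because there are only finitely many minimal primes, localizing at them gives a \emph{finite} product of zero-dimensional categories, and the analogous fracture square reduces $\LL_{\LA}$ to the same statement for the category localized away from the minimal primes, which has dimension one and is handled by the previous step.

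The genuine difficulty is the reconstruction formula beyond the finite case: in \cite{itloc} it comes from an honestly finite fracture cube, and it cannot hold in this form for arbitrary spectra, since otherwise the full Conjecture would follow. In dimension one the obstruction is that $A_{\mathrm{cl}}$ may contain infinitely many closed points, so the gluing of the closed and generic layers is a priori an infinite homotopy limit; the remedy is that, by stratification, $\LL_{A_{\mathrm{cl}}}$ and its comparison over the generic points split as a product indexed by the closed points of $A_{\mathrm{cl}}$, each factor interacting only with the finitely many minimal primes below it, so that the infinite homotopy limit breaks into a product of finite pullbacks. In dimension two there is one further layer --- a height-one prime can carry infinitely many closed points above it --- and the hypothesis of finitely many minimal primes is exactly what keeps the bottom of the induction a finite product, so that after the first reduction one genuinely lands back in the controlled dimension-one situation rather than in an uncontrolled infinite iteration. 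The crux, then, is to verify that these product decompositions are compatible with the binary composition operation on iterated localizations, so that the reconstruction formula really does assemble $\LL_{\LA}$ from finitely many finite pieces; pinning down precisely where finiteness is needed --- and what obstructs dimension three, or dimension two with infinitely many minimal primes --- is the heart of the matter. An alternative packaging would express the whole argument as a finite list of ``moves'' on tuples that preserve both the iterated localization and the thread sets and reduce any tuple to a normal form, but verifying the completeness of such a list runs into the same combinatorial core.
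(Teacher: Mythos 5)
Your plan hinges on a single unproven step, the \emph{reconstruction formula} $\LL_{\LA}\simeq \holim_{T}\LL_{\LA_T}$ indexed by thread sets, and the paper neither proves such a formula nor takes this route. In fact, the paper's Example~\ref{ex-counterexformulaloclim-sectionfinite} is a direct warning here: even the simpler statement $\LL_AX\cong\holim_{C\in s(A)'}\varphi_CX$ of Proposition~\ref{prop-LambdaAholimchains-sectionfinitecase} fails when $A$ is infinite, precisely because the homotopy limit decomposes into an infinite \emph{product} while $\LL_AX$ is governed by an infinite \emph{coproduct}; so your proposed remedy --- that the infinite homotopy limit breaks into a product of finite pullbacks --- does not close the gap, since the product is exactly what has the wrong size. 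Your sketches for cases (b) and (c) also diverge from what is needed: you conflate minimal and maximal Balmer primes in a few places (in a Balmer spectrum the minimal primes \emph{are} the closed points, so ``minimal-prime part versus closed-point part'' is not a decomposition), and the claim that ``localizing at the minimal primes gives a finite product of zero-dimensional categories'' is not what happens under a cofamily localization $L_{[\supseteq\fm]}$.

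The paper's actual strategy is considerably more hands-on and would be worth comparing against. For the finite case, it genuinely transcribes the full combinatorial machinery of \cite{itloc} --- the poset $\mathbb U$ of upward-closed sets of chains, the operation $\ast$, and the factorization $T(\mathbb A)=wA_1\ast\cdots\ast wA_n$ --- rather than invoking a single holim formula. For dimensions $1$ and $2$ it does \emph{not} attempt a global reconstruction: instead it (i) reduces any tuple to collapsed concatenated form via $\gamma\delta$ (Proposition~\ref{prop-LA=LdeltaA-sectionreduction}); (ii) classifies by hand what collapsed concatenated tuples look like when the spectrum has a unique maximal prime, respectively a unique maximal and a unique minimal prime, and checks that the thread sets pin down the normal form --- this is Proposition~\ref{prop-conjtruedim1irred-sectionlowdim} (three cases) and Proposition~\ref{prop-conjtruedim2irredonemin-sectionlowdim} (eleven cases); and (iii) bootstraps to the general low-dimensional case via Lemma~\ref{lem-conjecturereducedthomasoncomplements-sectionlowdim}, a \emph{finite} fracture over complements of Thomason subsets, together with the crucial finiteness input Lemma~\ref{lemma-intersectionmaxprimesfinite-sectionlowdim} (the intersection $\overline{\{\fp\}}\cap\overline{\{\fq\}}$ of two maximal-prime closures is finite, by Corollary~\ref{cor-finmanyirrednoethsectionlowdim}), and an induction on the number of maximal primes (dimension~$1$) respectively on the number of length-$2$ primes (dimension~$2$). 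None of these three ingredients appears in your proposal, and they are where the hypotheses (b)--(c) actually enter. To make your sketch rigorous you would at minimum need to replace the holim-over-thread-sets with a finite fracture of the type in Lemma~\ref{lem-conjecturereducedthomasoncomplements-sectionlowdim}, and supply the base-case classifications, which carry essentially all of the combinatorial content.
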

The category $\CT$ admitting a model is a technical condition for our arguments to go through. It ensures that we have well-behaved homotopy limits. See Remark~\ref{rmk-categorywithmodel-sectionreduction} for the exact details.

The structure of the paper will be the following: in Section~\ref{section-stratification} we first recall some of the basic notions and facts of the theory of Balmer spectra and the approach to stratification developed by Barthel, Heard and Sanders in \cite{BHS-stratification}. Finally, in Section~\ref{section-basics} we use these notions to establish our set-up, present the notation we will adopt for localizations and their composition and then spell out the above Conjecture as Conjecture~\ref{conj}. In Section~\ref{section-firstreductions} we provide reductions that allow us to trim the tuples labelling iterated localizations by removing elements from their subsets which do not fit in descending chains of inclusions of Balmer primes. In Section~\ref{section-finitecase} we adapt the argument of \cite{itloc} to obtain analogous results for the case where we limit ourselves to a finite subset of the Balmer spectrum $\text{Spc}(\CT^c)$. The proof of Conjecture~\ref{conj} when the Balmer spectrum itself is finite follows automatically (Corollary~\ref{cor-conjturefinitecase-sectionfinitecase}). We conclude with Section~\ref{section-lowdimcases} where we prove Conjecture~\ref{conj} for Balmer spectra of low dimensions (Theorem~\ref{thm-conjtruedim1-sectionlowdim} and Theorem~\ref{thm-conjtruedim2irred-sectionlowdim}).\\

\textbf{Acknowledgements:} I would like to thank Neil Strickland for giving me access to his personal notes and sharing his insight on various matters, in particular Example~\ref{ex-BPcomputationthread-sectionbasics} was indicated by him. Many thanks also to John Greenlees, who suggested trying to adapt my previous work to settings different from chromatic homotopy theory and whose computations formed the basis for Example~\ref{ex-SpcO(2)counterexamplefractureaxiom-sectionstratification}, and to Greg Stevenson, for his invaluable help with the theory of Balmer spectra.
\section{Stratification via Balmer-Favi support}
\label{section-stratification}
As mentioned in the Introduction, our main reference for the theory of tensor triangulated categories is \cite{BHS-stratification} and we will be adopting most of its notation. However, we establish our own notation regarding localization functors.

We recall that the Balmer spectrum $\text{Spc}(\CT^c)$ associated with a rigidly-compactly generated tensor triangulated category $\CT$ is a \textit{spectral space} in the sense of Hochster. For the definition and the properties of this class of topological spaces we take as reference the monograph \cite{spespa}.

\begin{definition}
Let $(P,\leq)$ be a partially ordered set. We denote by $\CP(P)$ its power set and by $\CP(P)'=\CP(P)\setminus \{\emptyset\}$ the collection of non-empty subsets. Let $S\subseteq P$ be an arbitrary subset, then we define the following collections
\begin{align*}
[\leq S]&=\{p \in P: \exists s  \in S \ p \leq s\} \\
[\geq S]&=\{p \in P: \exists s  \in S \ p \geq s\} \\
[\not \leq S]&=[\leq S]^c=\{p\in P : \forall s\in S \ p\not \leq s\}\\
[\not \geq S]&=[\geq S]^c=\{p \in P: \forall s  \in S \ p \not \geq s\}.  
\end{align*}
These, in the language of posets, are the family and cofamily generated by the set $S$ and their complements. In the case where $S=\{s\}$ is a singleton we will simply write $[\leq s], [\geq s]$ and $[\not \leq s], [\not \geq s]$.

Our main application will be for the Balmer spectrum $\text{Spc}(\mathcal{T}^c)$ with the ordering given by the inclusions of Balmer primes.

However, in some cases we will provide a homeomorphism of topological spaces $\text{Spc}(\mathcal{T}^c)\cong P$ which is an isomorphism of posets $(\text{Spc}(\mathcal{T}^c), \subseteq )\cong (P, \leq)$ for some partial ordering on $P$. In this situation we will identify the family and the cofamily generated by some $S\subseteq \text{Spc}(\mathcal{T}^c)$ with the respective family and cofamily in $P$.
\end{definition}

\begin{remark}\label{rmk-thomasondownwardclosed-sectionbalmer}
From \cite[Prop.~2.9]{balmer-spectrumprimes} it is clear that any Thomason subset $Y$ of a Balmer spectrum $\text{Spc}(\CT^c)$ is closed under specialization, or downward closed. That is, if we have two Balmer primes $\fp, \fq$ such that $\fp \in Y$ and $\fq \subseteq \fp$, then it follows $\fq \in Y$.
\end{remark}

\begin{definition}\label{def-noetherian-sectionbalmer}
A topological space is \textit{noetherian} if any descending chain of closed subsets admits a minimal element. This is equivalent to any open subset of the space being quasi-compact. 

\cite[Prop.~2.14]{balmer-spectrumprimes} establishes that an open subset of $\text{Spc}(\mathcal{T}^c)$ is quasi-compact if and only if it is the complement of the support of an object of $\mathcal{T}^c$. Therefore, a Balmer spectrum is noetherian if and only if any closed subspace is the support of an object. This condition is equivalent to any prime $\mathfrak{p}$ being \textit{visible}, i.e. there exists $X \in \mathcal{T}^c$ such that $\text{supp}(X)=\overline{\{\mathfrak{p}\}}$. See \cite[Cor.~7.14]{balmerfavi}.
\end{definition}
This is an important property because it reduces the topology of the Balmer spectrum to the closure of its points, which is determined by the poset structure given by the inclusion of prime ideals.

More explicitly, in noetherian Balmer spectra Thomason subsets can be completely characterized by the inclusion ordering.
\begin{lemma}\label{lem-thomason=downwardclosed-sectionbalmer}
Let $\CT$ be rigidly-compactly generated tensor triangulated category and suppose $\emph{Spc}(\CT^c)$ is noetherian. Then, a subset $Y\subseteq \emph{Spc}(\CT^c)$ is Thomason if and only if it is downward closed.
\end{lemma}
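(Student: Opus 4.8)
The plan is to prove the two implications separately, the forward one being essentially immediate and the backward one being where noetherianity enters, in the form already isolated in Definition~\ref{def-noetherian-sectionbalmer}.

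For the ``only if'' direction I would simply invoke Remark~\ref{rmk-thomasondownwardclosed-sectionbalmer}: a Thomason subset is by definition a union of closed subsets, each closed subset of a Balmer spectrum is closed under specialization (i.e.\ downward closed in the inclusion order), and an arbitrary union of downward-closed sets is downward closed. This uses nothing about noetherianity.

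For the ``if'' direction, suppose $Y\subseteq\text{Spc}(\CT^c)$ is downward closed. The key geometric input is that in a Balmer spectrum the closure of a point is the corresponding down-set, $\overline{\{\fp\}}=\{\fq : \fq\subseteq\fp\}=[\leq\fp]$, by \cite[Prop.~2.9]{balmer-spectrumprimes}. Since $Y$ is downward closed, $\overline{\{\fp\}}\subseteq Y$ for every $\fp\in Y$, and hence $Y=\bigcup_{\fp\in Y}\overline{\{\fp\}}$ exhibits $Y$ as a union of closed subsets. By Definition~\ref{def-noetherian-sectionbalmer}, noetherianity of $\text{Spc}(\CT^c)$ is equivalent to every open subset being quasi-compact, so the complement of each $\overline{\{\fp\}}$ is quasi-compact; equivalently, by \cite[Prop.~2.14]{balmer-spectrumprimes} and \cite[Cor.~7.14]{balmerfavi}, each $\overline{\{\fp\}}$ is the support of a compact object. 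Either way, $Y$ is a union of closed subsets with quasi-compact complements, which is exactly the definition of a Thomason subset.

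I do not expect a genuine obstacle here: the only step needing care is the identification of $\overline{\{\fp\}}$ with the down-set $[\leq\fp]$, which is standard spectral-space theory recorded in Balmer's foundational paper; the rest is a direct combination of that fact with the noetherian hypothesis as reformulated in Definition~\ref{def-noetherian-sectionbalmer}.
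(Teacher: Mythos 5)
Your proof is correct, and it essentially unfolds the argument that the paper defers to a reference: the paper states the lemma without proof, and the subsequent Example~\ref{ex-notnoetherianspectalspace-sectionbalmer} observes that the general fact (for any spectral space, noetherianity is equivalent to Thomason = specialization closed, \cite[Prop.~7.13]{balmerfavi}) gives the lemma immediately when applied to the Balmer spectrum. Your direct argument captures exactly the content of that citation: the forward implication is Remark~\ref{rmk-thomasondownwardclosed-sectionbalmer}, and the backward implication combines the identification $\overline{\{\fp\}}=[\leq\fp]$ from \cite[Prop.~2.9]{balmer-spectrumprimes} with the noetherian characterization in Definition~\ref{def-noetherian-sectionbalmer} to exhibit a downward-closed set as a union of closed sets with quasi-compact complements. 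This is the same route, just spelled out; there is no gap.
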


\begin{example}\label{ex-notnoetherianspectalspace-sectionbalmer}
For a spectral space $X$ we say a subset $Y\subseteq X$ is specialization closed if $y\in Y$ implies $\overline{\{y\}}\subseteq Y$. In this generality, a spectral space $X$ is noetherian if and only if its Thomason subsets coincide with the specialization closed subsets, see \cite[Prop.~7.13]{balmerfavi}.

For example, let us consider the space $\big\{\frac{1}{n} : n\geq 1\big\}\cup \{0\}$ as subspace of the real numbers. It can be verified directly that this is a spectral space which is not noetherian. It is T1, hence all its subsets are specialization closed. But clearly there are subsets which are not Thomason: e.g. $\{0\}$ is a closed subset whose complement is not compact.
\end{example}

%
%
\begin{definition}\label{def-kappaprime-sectionbalmer}
Let $\CT$ be a rigidly-compactly generated tensor triangulated category. Let $Y\subseteq \text{Spc}(\CT^c)$ be a Thomason subset, so we have the corresponding thick tensor ideal of compact objects $\CT^c_{Y}$ by \cite[Thm.~4.10]{balmer-spectrumprimes}. Then we can invoke \cite[Thm.~3.3.5]{hopast:ash} to obtain from $\CT^c_{Y}$ the exact triangle
\[ \Gamma_{Y}\mathbbm{1}\rightarrow \mathbbm{1} \rightarrow L_{Y^c}\mathbbm{1}\rightarrow \Sigma \Gamma_Y\mathbbm{1} \]
where $\Gamma_{Y}\mathbbm{1} \in \text{Loc}_{\otimes}\bc{\CT^c_Y}$ and $L_{Y^c}\mathbbm{1} \in \text{Loc}_{\otimes}\bc{\CT^c_Y}^{\perp}$ are the idempotents associated respectively to the acyclicization and localization with respect to $\text{Loc}_{\otimes}\bc{\CT^c_Y}$.

In the language of \cite[Rmk.~1.23]{BHS-stratification}, we have $\Gamma_Y\mbu=e_Y$ and $L_{Y^c}\mbu=f_Y$. Our choice is motivated by the fact that we want to keep in mind that these two idempotents represent a colocalization and a localization functor respectively. 
\end{definition}

\begin{remark}\label{rmk-smahsinglocinducedinclusionbalmer-sectionbalmer}
In the above set-up, the smashing localization 
\[L_{Y^c}\colon \CT \rightarrow L_{Y^c}\CT \qquad X\mapsto L_{Y^c}\mathbbm{1}\otimes X \]
is finite by construction. Therefore, if we consider its restriction to the compact objects $L_{Y^c}\colon \CT^c\rightarrow L_{Y^c}\CT^c$, \cite[Rmk.~1.23]{BHS-stratification} allows us to identify the function $\text{Spc}(L_{Y^c})$ with the subspace inclusion
\[ \text{Spc}(L_{Y^c}\CT^c)\cong Y^c\hookrightarrow \text{Spc}(\CT^c). \]
\end{remark}
We observe that a localization functor on a tensor triangulated category $\CT$ is uniquely determined by its kernel, which is a localizing tensor ideal of $\CT$. The classification of localizing tensor ideals is the motivating concept of stratification.

Given $\CT$ a rigidly-compactly generated tensor triangulated category with weakly noetherian Balmer spectrum, we will consider the notion of stratification via Balmer-Favi support introduced in  \cite[Def.~4.4]{BHS-stratification}.

Before concluding this section, we make the discussion less abstract by recalling three types of tensor triangulated categories which are known to be stratified. These will provide concrete examples which we will examine later when we study the composition of localizations.

\begin{example}\label{ex-DRnoethstratified-sectionbalmer}
Let $R$ be a commutative ring. We consider $D(R)$ the derived category of $R$-modules. Then, there exists a homeomorphism
\begin{align*}
\text{Spec}(R)&\cong \text{Spc}(D(R)^c) \\
P&\mapsto \mathfrak{p}=\{ M \in D(R)^c : M_P\simeq 0 \}
\end{align*} 
where $\text{Spec}(R)$ is the classical Zariski spectrum and $M_P$ denotes the localization at the prime ideal $P$. This is a consequence of the more general classification of prime tensor ideals of the derived category of a quasi-compact quasi-separated scheme provided by Thomason \cite[Thm.~4.1]{thomason-classification}.

We warn the reader that this homeomorphism is inclusion reversing: an inclusion of algebraic primes $P\subseteq Q$ corresponds to the inclusion of Balmer primes $\mathfrak{q}\subseteq \fp$.

If we assume the ring $R$ to be noetherian, then this Balmer spectrum is a noetherian space. In this case, it is a classical result of Neeman that the localizing tensor ideals of $D(R)$ are in bijection with the subsets of $\text{Spec}(R)$ (\cite[Thm.~2.8]{neeman-tower}). In \cite[Ex.~5.7]{BHS-stratification} the relation between Neeman's notion of support and the Balmer-Favi support is discussed and \cite[Thm.~5.8]{BHS-stratification} provides a direct proof that $D(R)$ is stratified in the sense of \cite[Def.~4.4]{BHS-stratification}.
\end{example}

\begin{example}\label{ex-LnSpstratified-sectionbalmer}
We fix $p$ a prime number and $n$ a natural number. We denote by $E(n)$ the $n$-th Johnson-Wilson theory at the prime $p$ and by $K(i)$ the $i$-th Morava K-theory for an arbitrary number $i$. We set $L_n \CS p_{(p)}$ to be the localization with respect to $E(n)$ of the $p$-local stable homotopy category. 

It was proven in \cite[Thm.~6.9]{host-loc} that $\text{Spc}(L_n \CS p_{(p)}^c)$ consists of the prime ideals
\[ \CE_0 \supset \CE_1\supset \dots \supset \CE_{n-1}\supset \CE_n=0   \]
where $\CE_i=\{X\in L_n\CS p_{(p)}^c : K(i)_*X=0\}$. We notice that this Balmer spectrum is trivially noetherian since it is finite.

Furthermore, \cite[Thm.~6.14]{host-loc} establishes a bijection between the localizing subcategories of $L_n \CS p_{(p)}$ and the subsets of $\text{Spc}(L_n \CS p_{(p)}^c)$ using a notion of support defined via the $K(i)$-homology (see \cite[Def.~6.7]{host-loc}).

In \cite[\S 10]{BHS-stratification} it is provided an identification $g(\CE_i)\cong M_{i}S^0$ (the $i$-th monochromatic sphere) and then proved that the Balmer-Favi support coincides with the above notion of support. The conclusion is that $L_n \CS p_{(p)}$ is stratified via Balmer-Favi support.
\end{example}

\begin{example}\label{ex-GSpQstratified-sectionbalmer}
Let $G$ be a compact Lie group. We set $G \da \CS p_{\LQ}$ to be the rational $G$-equivariant stable homotopy category. It is proved in \cite[Thm.~8.4]{gre-rationalbalmer} that the elements of $\text{Spc}(G \da \CS p_{\LQ}^c)$ are the tensor ideals in the form 
\[ {\fp}_H=\{ X \in G \da \CS p_{\mathbb{Q}}^c : \phi^H X \cong 0 \} \] 
where $H$ ranges over the conjugacy classes of closed subgroups $H \leq G$ and $\phi^H$ denotes the classical geometric $H$-fixed points functor. Moreover, we have an inclusion of Balmer primes $\fp_K\subseteq \fp_H$ if and only if $K$ is conjugate to a subgroup cotorally included in $H$ (we write this as $K \leq_{ct} H$), see \cite[Cor.~5.4]{gre-rationalbalmer} and \cite[Cor.~7.4]{gre-rationalbalmer}.

We denote by $(\Gamma G, zf)$ the set of conjugacy classes of closed subgroups of $G$ with the $zf$-topology (defined in \cite[\S 10.B]{gre-rationalbalmer}). Then \cite[Thm.~10.2]{gre-rationalbalmer} establishes a homeomorphism of spaces
\begin{align*}
(\Gamma G, zf)&\rightarrow \text{Spc}(G \da \CS p_{\LQ}^c)\\
[H]&\mapsto \fp_H
\end{align*}
which translates the cotoral inclusion $\leq_{ct}$ to the inclusion of Balmer primes.

We observe that in general the Balmer spectrum $\text{Spc}(G \da \CS p_{\LQ}^c)$ is not noetherian. However, if $G$ is the product of a torus by a finite group then we have noetherianity.

In \cite[\S 12]{BHS-stratification} the results of \cite{gre-rationalbalmer} are interpreted using the Balmer-Favi support to prove that $G \da \CS p_{\LQ}$ is stratified in the sense of \cite[Def.~4.4]{BHS-stratification}.
\end{example}

\section{Bousfield localization on stratified categories} \label{section-basics}
By the very definition of stratification, for stratified tensor triangulated categories the localizing tensor ideals are completely classified by the Balmer-Favi support. Therefore, we can propose the following notation covering all possible Bousfield localizations.
\begin{definition}\label{def-localizationLA-sectionbasics}
Let $\CT$ be a rigidly-compactly generated tensor triangulated category with a weakly noetherian Balmer spectrum which is stratified. Then, for any subset $A\subseteq \text{Spc}(\CT^c)$ we set $\LL_{A}$ to be the localization with respect to the localizing subcategory 
\[ \CT_{A^c}=\{ X \in \CT : \text{Supp}(X)\subseteq A^c \}=\{ X \in \CT : \forall \fp \in A \quad  g(\fp)\otimes X=0 \}. \]
That is, the kernel of the localization $\LL_A$ is given by the subcategory $\CT_{A^c}$.

We define $g(A)=\coprod_{\fp \in A} g(\fp)$. Then,  $\LL_A$ coincides with the Bousfield localization associated with this object. We consider it as an endofunctor of the category $\CT$. If $A$ consists in a singleton, say $\{\fp\}$, we simply write $\LL_{\fp}$ instead of $\LL_{\{\fp\}}$.

Let $\LA=(A_1,\dots, A_k)$ be a $k$-tuple of subsets of the Balmer spectrum $A_i\subseteq \text{Spc}(\CT^c)$, then we set $\LL_{\LA}$ to be the composition of localizations $\LL_{A_1}\LL_{A_2}\dots \LL_{A_k}$.
\end{definition}

\begin{remark}
The assumption that $\CT$ is stratified means that all localizing tensor ideals are set-generated and they are completely classified by the Balmer-Favi support. See \cite[Prop.~3.5]{BHS-stratification} and \cite[Thm.~4.1]{BHS-stratification}.  It follows that any localizing tensor ideal of $\CT$ is in the form $\CT_{A^c}$ for some subset $A\subseteq \text{Spc}(\CT^c)$. I.e., it is the Bousfield class of some object of $\CT$.

Therefore, all localizations on $\CT$ are Bousfield localizations and they are in the form $\LL_A$ as in Definition~\ref{def-localizationLA-sectionbasics}.
\end{remark}

From now on, whenever we invoke a tensor triangulated category this will be a rigidly-compactly generated tensor triangulated category with weakly noetherian Balmer spectrum which is stratified.
\begin{definition}
Given a tensor triangulated category $\CT$ and two objects $X,Y\in \CT$ we will be denoting by $[X,Y]$ the hom-group of morphisms from $X$ to $Y$, i.e. $[X,Y]=\text{Hom}_{\CT}(X,Y)$.

Moreover, we denote by 
\[F(-,-)\colon \CT^{op}\times \CT\rightarrow \CT \]
the internal hom functor. That is, $F(X,-)$ is the exact functor right adjoint to $X\otimes -$. We will write $DX=F(X,\mbu)$ for the dual of the object $X$.
\end{definition}


\begin{example}
Consider a Thomason subset $A\subseteq \text{Spc}(\CT^c)$. As we explained in Definition~\ref{def-kappaprime-sectionbalmer}, associated with this subset we have an exact triangle
\[\Gamma_A\mathbbm{1}\rightarrow \mathbbm{1}\rightarrow L_{A^c}\mathbbm{1} \]
giving rise to the finite localization 
\[L_{A^c}\colon \CT\rightarrow \CT \qquad X\mapsto X\otimes L_{A^c}\mbu. \]
In virtue of \cite[Lemma~2.13]{BHS-stratification}, the right idempotent $L_{A^c}\mbu$  and the object $g(A^c)$ have the same Bousfield class. Therefore, the finite localization $L_{A^c}$ coincides with $\LL_{A^c}$. It is immediate that for any two Thomason subsets $A, B$ we have $L_{A^c}L_{B^c}\cong L_{A^c\cap B^c}$.

If $\text{Spc}(\CT^c)$ is generically noetherian then \cite[Thm.~9.11]{BHS-stratification} implies that all the possible smashing localizations are of this form and they are actually finite. 

Moreover, applying $F(-,X)$ to the starting exact triangle we obtain another triangle
\[ F(L_{A^c}\mbu, X)\rightarrow X\rightarrow F(\Gamma_A\mbu, X). \]
The functor 
\[\Lambda_A\colon \CT\rightarrow \CT \qquad X\mapsto F(\Gamma_A\mbu, X)\]
coincides with the Bousfield localization $\LL_{A}$ (\cite[Thm.~3.3.5]{hopast:ash}). For two Thomason subsets $A, B$ the composition of localizations $\Lambda_A$ and $\Lambda_B$ can be easily computed $\Lambda_A\Lambda_B\cong \Lambda_{A\cap B}$.
\end{example}

\begin{proposition}\label{prop-LambdaALBc=convexloc}
Let $A, B\subseteq \emph{Spc}(\CT^c)$ be two Thomason subsets. Then, the functor
\[\CT\rightarrow \CT \qquad X\mapsto F(\Gamma_A\mbu, L_{B^c}X) \]
is the Bousfield localization with respect to $g(A\cap B^c)$, i.e.\ we have a natural isomorphism $\LL_{(A,B^c)}\cong\LL_{A\cap B^c}$.
\end{proposition}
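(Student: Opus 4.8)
The plan is to reduce everything to the uniqueness of Bousfield localizations, using the explicit descriptions of $\LL_A$ and $\LL_{B^c}$ recorded just before the statement. First I would unwind the composite: since $A$ and $B$ are Thomason, the preceding Example identifies $\LL_A$ with the completion functor $\Lambda_A = F(\Gamma_A\mbu,-)$ and $\LL_{B^c}$ with the finite (smashing) localization $L_{B^c} = L_{B^c}\mbu\otimes(-)$, so $\LL_{(A,B^c)}X = \Lambda_A(L_{B^c}X) = F(\Gamma_A\mbu, L_{B^c}\mbu\otimes X)$ and the canonical transformation $\mathrm{Id}\to\LL_{(A,B^c)}$ factors as $X\to L_{B^c}X\to\Lambda_A L_{B^c}X$. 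Because any two Bousfield localizations with the same class of acyclic objects are canonically naturally isomorphic, and $\CT_{(A\cap B^c)^c}$ is by definition the class of $g(A\cap B^c)$-acyclics, it suffices to prove two things: (1) $\Lambda_A L_{B^c}$ is a Bousfield localization, and (2) its acyclic objects are exactly $\CT_{(A\cap B^c)^c}$.

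For (1), I would argue that $\Lambda_A L_{B^c}$ is a composite of reflections. The finite localization $L_{B^c}$ exhibits the subcategory $\CT'$ of $\LL_{B^c}$-local objects (the essential image of $L_{B^c}\mbu\otimes(-)$, equivalently the right orthogonal of the tensor ideal $\CT_B$) as a reflective triangulated subcategory of $\CT$. The functor $\Lambda_A$ preserves $\CT'$: for $Y\in\CT'$ and $W\in\CT_B$ one has $[W,\Lambda_A Y] = [W,F(\Gamma_A\mbu,Y)] = [W\otimes\Gamma_A\mbu, Y] = 0$ since $\CT_B$ is a tensor ideal and $Y$ is right orthogonal to it. Being a Bousfield localization of $\CT$ that stabilizes the triangulated subcategory $\CT'$ together with its unit, $\Lambda_A$ restricts to a Bousfield localization of $\CT'$; composing with $L_{B^c}$ and the inclusion $\CT'\hookrightarrow\CT$ then shows $\Lambda_A L_{B^c}$ is a Bousfield localization of $\CT$, with class of acyclics $\{X : L_{B^c}X\in\ker\Lambda_A\}$. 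As a sanity check one can also verify directly that $\Lambda_A L_{B^c}X$ is $g(A\cap B^c)$-local: for $Z\in\CT_{(A\cap B^c)^c}$ we have $[Z,\Lambda_A L_{B^c}X] = [Z\otimes\Gamma_A\mbu, L_{B^c}X]$, and $\text{Supp}(Z\otimes\Gamma_A\mbu)\subseteq\text{Supp}(Z)\cap A\subseteq (A^c\cup B)\cap A\subseteq B$, so the target being $\LL_{B^c}$-local forces this group to vanish.

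For (2), I would compute as follows. Since $\ker\Lambda_A = \ker\LL_A = \CT_{A^c}$, an object $X$ is $\Lambda_A L_{B^c}$-acyclic precisely when $\text{Supp}(L_{B^c}X)\subseteq A^c$. Now $\text{Supp}(L_{B^c}X) = \text{Supp}(L_{B^c}\mbu\otimes X)$, and I claim this equals $B^c\cap\text{Supp}(X)$: the inclusion $\subseteq$ is submultiplicativity of the Balmer--Favi support together with $\text{Supp}(L_{B^c}\mbu) = \text{Supp}(g(B^c)) = B^c$, while for $\supseteq$ one notes that for $\fp\in B^c$, enlarging a Thomason pair realizing $\{\fp\}$ by $B$ shows $g(\fp)\otimes L_{B^c}\mbu\cong g(\fp)$, whence $g(\fp)\otimes L_{B^c}X\cong g(\fp)\otimes X$. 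It is in pinning down this support identity on the locus $A\cap B^c$ — equivalently, that $\Lambda_A L_{B^c}$-acyclicity of $X$ forces $g(\fq)\otimes X = 0$ for each $\fq\in A\cap B^c$ — that the hypothesis that the primes of $A\cap B^c$ are visible enters. Granting it, $X$ is acyclic iff $B^c\cap\text{Supp}(X)\subseteq A^c$, i.e.\ iff $\text{Supp}(X)\cap(A\cap B^c) = \emptyset$, i.e.\ iff $X\in\CT_{(A\cap B^c)^c}$. Combining (1) and (2) with the uniqueness of Bousfield localizations yields the desired canonical isomorphism $\LL_{(A,B^c)}\cong\LL_{A\cap B^c}$.

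The main obstacle is step (2): the support computation for $L_{B^c}X$ and, above all, the clean identification of the acyclics with $\CT_{(A\cap B^c)^c}$ on the locally closed set $A\cap B^c$, which is where one genuinely uses the geometry of the Balmer spectrum and the visibility hypothesis rather than formal localization theory. A secondary technical point is the claim in step (1) that $\Lambda_A$ preserves the essential image of $L_{B^c}$, which must be checked with some care since $\Gamma_A\mbu$ is not compact in general.
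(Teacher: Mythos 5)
Your proof is correct but takes a genuinely different route from the paper's. The paper verifies directly that the natural map $X \to \Lambda_A L_{B^c}X$ has $g(A\cap B^c)$-local target (via a hom-group vanishing computed with the support formula from \cite[Lemma~2.13]{BHS-stratification}) and $g(A\cap B^c)$-acyclic cofiber (via the octahedral axiom and the compact detectors $\sigma_{\fp}$ furnished by visibility), then concludes by the universal property of $\LL_{A\cap B^c}$. You instead argue abstractly that $\Lambda_A L_{B^c}$ is a Bousfield localization --- because $\Lambda_A$ preserves $L_{B^c}$-local objects; the tensor-ideal observation is exactly right and needs no compactness of $\Gamma_A\mbu$, so your closing worry there is unfounded --- and then identify its kernel with $\CT_{(A\cap B^c)^c}$ via the multiplicativity of the Balmer--Favi support against Thomason idempotents, $\text{Supp}(L_{B^c}\mbu\otimes X)=B^c\cap\text{Supp}(X)$, concluding by uniqueness of Bousfield localizations with a given class of acyclics. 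Both arguments are sound; yours trades the octahedral/compact-detection computation for an abstract reflector-composition argument plus the idempotent support formula, which is arguably cleaner.

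One correction to your remarks about where visibility enters: it does not enter the support identity for $L_{B^c}X$. That identity follows from the formal relations $\Gamma_Y\mbu\otimes\Gamma_{Y'}\mbu\cong\Gamma_{Y\cap Y'}\mbu$, $L_{Y^c}\mbu\otimes L_{Y'^c}\mbu\cong L_{(Y\cup Y')^c}\mbu$, and the independence of the Thomason pair defining $g(\fp)$, all of which hold on any weakly noetherian spectrum without visibility. The paper needs visibility because its acyclic-cofiber step must bound the support of the function object $F(L_{A^c}\mbu, L_{B^c}X)$; since $g(\fp)$ is not dualizable, the only way to move it inside $F(-,-)$ is to replace it by a compact $\sigma_{\fp}$ with $\text{supp}(\sigma_{\fp})=\overline{\{\fp\}}$, invoking \cite[Lemma~2.18]{BHS-stratification}, and that replacement is precisely what visibility buys. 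Your reorganization never takes the support of a function object and so does not run into this issue; in fact it appears to prove the statement without the visibility hypothesis.
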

\begin{proof}
We show directly that for any $X\in \CT$ the morphism $X\rightarrow \Lambda_AL_{B^c}X$ has $g(A\cap B^c)$-local target and its cofiber is $g(A\cap B^c)$-acyclic.

Let $Z$ be any $g(A\cap B^c)$-acyclic spectrum, then we have
\[ [Z, \Lambda_AL_{B^c}X]=[Z, F(\Gamma_A\mbu, L_{B^c}X)]\cong[\Gamma_A \mbu \otimes Z, L_{B^c}X]\cong [L_{B^c} \mbu \otimes \Gamma_{A}\mbu \otimes Z, L_{B^c}X]. \]
By \cite[Lemma~2.13]{BHS-stratification}  we have $\text{Supp}(L_{B^c} \mbu \otimes \Gamma_{A}\mbu \otimes Z)=\text{Supp}(Z)\cap A\cap B^c$ and the acyclic condition on $Z$ means this set is empty. Stratification implies that objects with empty support are trivial, thus the above hom-group is zero.

Since the proposed morphism is given by the composition 
\[ X\rightarrow L_{B^c}X\rightarrow \Lambda_AL_{B^c}X \]
we can use the octahedral axiom to produce an exact triangle of the form
\[ \Sigma\Gamma_{B}X\rightarrow C \rightarrow \Sigma F(L_{A^c}\mbu, L_{B^c}X) \]
where $C$ is the cofiber of the morphism $X\rightarrow \Lambda_AL_{B^c}X$.

If $\fp \in A\cap B^c$ we immediately have $g(\fp)\otimes \Gamma_{B}X=0$, thus we are left with showing $F(L_{A^c}\mbu, L_{B^c}X)$ is $g(A\cap B^c)$-acyclic. We start by observing that there exists a subset $Z\subseteq \text{Spc}(\CT^c)$ such that it is Thomason closed, $\fp \in Z$ and $Z\subseteq A$. It is enough to recall that the inverse topology on $\text{Spc}(\CT^c)$ has as open subsets the Thomason subsets and has an open basis consisting of closed Thomason subsets. See \cite[Def.~1.4.1]{spespa} and \cite[Thm.~1.4.3]{spespa} for a complete reference. Thus, we can form $Z$ by just taking a basis open neighbourhood of $\fp \in A$ in the inverse topology. Since $A$ is open with respect to the inverse topology, this open neighbourhood can be taken to be included in $A$. By \cite[Prop.~2.14~(b)]{balmer-spectrumprimes}, we must have $Z=\text{supp}(\sigma_{\fp})$ for some $\sigma_{\fp} \in \CT^c$. By \cite[Lemma~2.18]{BHS-stratification} it is enough to show $\sigma_{\fp} \otimes F(L_{A^c}\mbu, L_{B^c}X)=0$ to prove that $\fp$ does not belong to the support of $F(L_{A^c}\mbu, L_{B^c}X)$. By compactness we have
\[\sigma_{\fp} \otimes F(L_{A^c}\mbu, L_{B^c}X)\cong F(D\sigma_{\fp} \otimes L_{A^c}\mbu, L_{B^c}X) \]
and $\text{supp}(D\sigma_{\fp})=\text{supp}(\sigma_{\fp})$ holds by \cite[Prop.~2.7~(i)]{balmer-suppfiltr}. By construction $\text{supp}(D\sigma_{\fp})\subseteq A$, therefore $\text{Supp}(D\sigma_{\fp} \otimes L_{A^c}\mbu)=\emptyset$. Thus, the cofiber $C$ is $g(A\cap B^c)$-acyclic as we wanted.
\end{proof}

We now introduce the crucial relationship between the composition of Bousfield localizations and the topology of the Balmer spectrum.
\begin{proposition}\label{prop-fractureaxiombalmerspectrum}
Let $\CT$ be as above. Let $A\subseteq \emph{Spc}(\CT^c)$ be an arbitrary subset. Let $\fp$ be a visible Balmer prime such that $\forall \fq \in A$ we have $\fp \not \supseteq \fq$. Then, $\LL_{\fp}\LL_{A}=0$ holds.
\end{proposition}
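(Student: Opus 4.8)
The plan is to show that $\LL_{\fp}\LL_A X = 0$ for every $X \in \CT$ by identifying the target of $\LL_{\fp}\LL_A X$ as a $g(\fp)$-local object whose underlying object is already $g(\fp)$-acyclic. Recall that $\LL_{\fp}Y = 0$ for an object $Y$ precisely when $Y$ is $g(\fp)$-acyclic, i.e.\ $g(\fp) \otimes Y = 0$; equivalently, since stratification says support detects vanishing, when $\fp \notin \operatorname{Supp}(Y)$. So it suffices to prove $\fp \notin \operatorname{Supp}(\LL_A X)$ for all $X$. First I would record the hypothesis in the convenient form: the condition ``$\fp \not\supseteq \fq$ for all $\fq \in A$'' says exactly that $\fp \in [\not\geq A]$, or dually that $A \subseteq [\not\leq \fp]$.

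The key computation is to control $\operatorname{Supp}(\LL_A X)$. By the defining triangle of the Bousfield localization $\LL_A$ associated to $g(A)$, there is an acyclization–localization triangle $\Gamma_A' X \to X \to \LL_A X$ (abusively writing $\Gamma_A'$ for the $g(A)$-acyclization), so $\operatorname{Supp}(\LL_A X) \subseteq \operatorname{Supp}(\Gamma_A' X) \cup \operatorname{Supp}(X)$ is not directly what I want; instead I use that $\LL_A X$ is $g(A)$-local. The cleaner route is via the visibility of $\fp$: choose a compact $\sigma_{\fp}$ with $\operatorname{supp}(\sigma_{\fp}) = \overline{\{\fp\}}$. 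By \cite[Lemma~2.18]{BHS-stratification} it is enough to show $\sigma_{\fp} \otimes \LL_A X = 0$ to conclude $\fp \notin \operatorname{Supp}(\LL_A X)$. Now $\sigma_{\fp} \otimes \LL_A X$ is built from the tensor unit localized, and one computes $[\,\sigma_{\fp}, \LL_A X\,]$-type groups, or more directly $\sigma_{\fp} \otimes g(A)$: by \cite[Lemma~2.13]{BHS-stratification} (tensor–support formula) $\operatorname{supp}(\sigma_{\fp} \otimes g(\fq)) = \overline{\{\fp\}} \cap \{\fq\}$, which is nonempty only if $\fq \in \overline{\{\fp\}}$, i.e.\ $\fq \subseteq \fp$. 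By hypothesis no $\fq \in A$ satisfies $\fq \subseteq \fp$, hence $\sigma_{\fp} \otimes g(\fq) = 0$ for all $\fq \in A$, and therefore $\sigma_{\fp} \otimes g(A) = \coprod_{\fq \in A} \sigma_{\fp} \otimes g(\fq) = 0$. Since $\sigma_{\fp} \otimes g(A) = 0$, the compact object $\sigma_{\fp}$ is $g(A)$-local-acyclic in the appropriate sense, so $\sigma_{\fp}$ lies in the kernel of $\LL_A$ applied after tensoring, which forces $\sigma_{\fp} \otimes \LL_A X$ to vanish: indeed $\LL_A$ is smashing (it is the localization with respect to $g(A)$, whose acyclics form a localizing tensor ideal), so $\sigma_{\fp} \otimes \LL_A X = \LL_A(\sigma_{\fp} \otimes X)$ — wait, this needs $\LL_A$ smashing, which holds here since the acyclic class $\CT_{A^c}$ is a tensor ideal. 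Then $\LL_A(\sigma_{\fp}\otimes X) = \sigma_{\fp}\otimes X \otimes \LL_A\mbu$, and since $\operatorname{Supp}(\sigma_{\fp}\otimes X) \subseteq \overline{\{\fp\}}$ while $\overline{\{\fp\}} \cap A = \emptyset$, the object $\sigma_{\fp}\otimes X$ is already $g(A)$-acyclic, so its localization vanishes.

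Assembling: $\sigma_{\fp} \otimes \LL_A X = 0$, hence $\fp \notin \operatorname{Supp}(\LL_A X)$, hence $g(\fp) \otimes \LL_A X = 0$ by \cite[Lemma~2.18]{BHS-stratification} and the fact that the support of $g(\fp)$ is $\{\fp\}$, hence $\LL_{\fp}\LL_A X = 0$ for all $X$, which is the claim $\LL_{\fp}\LL_A = 0$.

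I expect the main obstacle to be the bookkeeping around whether $\LL_A$ is smashing and applying the tensor–support formulas correctly: $g(A) = \coprod_{\fq \in A} g(\fq)$ may be an infinite coproduct, so I need $\operatorname{Supp}$ to behave well with respect to it (which it does, $\operatorname{Supp}$ sends coproducts to unions), and I must be careful that ``$g(\fp)$-acyclic'' and ``$\fp \notin \operatorname{Supp}(-)$'' are genuinely interchangeable — this is exactly where stratification and \cite[Lemma~2.18]{BHS-stratification} are used, and where the hypothesis that $\fp$ is \emph{visible} is essential (so that $\sigma_{\fp}$ exists). A secondary subtlety: I should double-check that I do not actually need $A$ to be Thomason here — the argument above only uses the tensor–support formula and stratification, which apply to arbitrary $A$, consistent with the statement allowing $A$ to be an arbitrary subset.
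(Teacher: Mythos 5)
Your overall plan matches the paper's — use visibility to get a compact $\sigma_{\fp}$ with $\operatorname{supp}(\sigma_{\fp})=\overline{\{\fp\}}$, invoke \cite[Lemma~2.18]{BHS-stratification} to reduce to showing $\sigma_{\fp}\otimes\LL_A X=0$, and then exploit the hypothesis that $\overline{\{\fp\}}\cap A=\emptyset$. But there is a genuine error in the key step: you assert that $\LL_A$ is smashing ``since the acyclic class $\CT_{A^c}$ is a tensor ideal.'' This is false. The acyclics of \emph{any} Bousfield localization by an object in a tt-category form a tensor ideal, but this does not make the localization smashing — smashing means $\LL_A X\cong X\otimes\LL_A\mbu$, which in this framework happens precisely when $A$ is the complement of a Thomason subset (cf.\ the discussion of $L_{A^c}$ in the paper and \cite[Thm.~9.11]{BHS-stratification}). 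For a general $A$, say $A$ a single non-closed point with non-Thomason complement, $\LL_A$ is not smashing, so the identity $\LL_A(\sigma_{\fp}\otimes X)=\sigma_{\fp}\otimes X\otimes\LL_A\mbu$ you write down does not hold.

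Fortunately, the commutation fact you actually need, namely
$$\sigma_{\fp}\otimes \LL_A X \;\cong\; \LL_A(\sigma_{\fp}\otimes X),$$
is true, but for a different reason: $\sigma_{\fp}$ is \emph{dualizable} (compact in a rigidly-compactly generated tt-category). One checks directly that $\sigma_{\fp}\otimes\LL_A X$ is $g(A)$-local — for acyclic $Z$ we have $[Z,\sigma_{\fp}\otimes\LL_A X]\cong[D\sigma_{\fp}\otimes Z,\LL_A X]=0$ since $D\sigma_{\fp}\otimes Z$ is acyclic (tensor ideal) — and that the fiber of $\sigma_{\fp}\otimes X\to\sigma_{\fp}\otimes\LL_A X$ is acyclic (again by the tensor ideal property). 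So $\sigma_{\fp}\otimes\LL_A X$ satisfies the universal property of $\LL_A(\sigma_{\fp}\otimes X)$. With this corrected justification your proof closes exactly as you describe: $\operatorname{Supp}(\sigma_{\fp}\otimes X)\subseteq\overline{\{\fp\}}$, which is disjoint from $A$ by hypothesis, so $\sigma_{\fp}\otimes X$ is $g(A)$-acyclic and $\LL_A(\sigma_{\fp}\otimes X)=0$.

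The paper takes a slightly more economical route that sidesteps establishing this commutation isomorphism: it shows directly that the identity of $\sigma_{\fp}\otimes\LL_A X$ is zero, by noting that under the adjunction for the dualizable $\sigma_{\fp}$ it corresponds to a morphism $D\sigma_{\fp}\otimes\sigma_{\fp}\otimes\LL_A X\to\LL_A X$, whose source is $g(A)$-acyclic (support contained in $\overline{\{\fp\}}$) and whose target is $g(A)$-local, hence the morphism vanishes. Both arguments hinge on the same two ingredients — dualizability of $\sigma_{\fp}$ and the support estimate — but the paper's adjunction formulation avoids needing to verify that tensoring by a compact object commutes with the (non-smashing) Bousfield localization. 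Also note: the paragraph in your proposal computing $\sigma_{\fp}\otimes g(A)=0$ is a detour you do not end up using; the relevant fact is that $\sigma_{\fp}\otimes X$ (not $\sigma_{\fp}\otimes g(A)$) is $g(A)$-acyclic.
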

\begin{proof}
This is just an adaptation of the argument of \cite[Prop.~2.4]{itloc} to the context of Balmer spectra. However, we repeat the proof here for ease of the reader.

Visibility of $\fp$ means the existence of a compact object $\sigma_{\fp}$ whose support coincides with the closed set $\overline{\{\fp\}}$. The claim is equivalent to $g(\fp)\otimes \LL_{A}X=0$ for an arbitrary object $X\in \CT$. But \cite[Lemma~2.18]{BHS-stratification} implies that $\fp$ does not belong to the support of $\LL_{A}X$ if and only if it does not belong to the support of $\sigma_{\fp}\otimes \LL_{A}X$. Therefore, it suffices to show $\sigma_{\fp}\otimes \LL_{A}X=0$.

This is equivalent to showing the identity map of $\sigma_{\fp}\otimes \LL_{A}X$ is trivial. By adjunction this is the same as the corresponding morphism
\[D\sigma_{\fp}\otimes \sigma_{\fp}\otimes \LL_{A}X\rightarrow \LL_{A}X \]
being trivial. But the support of the source is a subset of $\text{supp}(\sigma_{\fp})=\{\fq \in \text{Spc}(\CT^c) : \fq\subseteq \fp\}$ and by assumption this is disjoint from $A$. Hence, the source is $g(A)$-acyclic while the target is $g(A)$-local. It follows the morphism must be trivial as claimed. 
\end{proof}
\begin{corollary}\label{cor-facturenoetherianbalmerspectrum-sectionstratification}
Suppose the Balmer spectrum $\emph{Spc}(\CT^c)$ is noetherian. Let $A,B\subseteq \emph{Spc}(\CT^c)$  be two subsets such that $\forall \fq\in A, \fp\in B$ we have $\fp\not \supseteq \fq$. Then, $\LL_{B}\LL_{A}=0$ holds.
\end{corollary}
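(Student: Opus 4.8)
The plan is to bootstrap from the singleton case already established in Proposition~\ref{prop-fractureaxiombalmerspectrum}. First I would recall that noetherianity of $\text{Spc}(\CT^c)$ guarantees, by Definition~\ref{def-noetherian-sectionbalmer}, that \emph{every} Balmer prime is visible; this removes the visibility hypothesis from Proposition~\ref{prop-fractureaxiombalmerspectrum} and lets us apply it to any prime we wish.

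Now fix an arbitrary object $X\in\CT$. For each $\fp\in B$ the hypothesis says $\fp\not\supseteq\fq$ for all $\fq\in A$, so Proposition~\ref{prop-fractureaxiombalmerspectrum} applies and gives $\LL_{\fp}\LL_{A}=0$. Since $\LL_{\fp}$ is the Bousfield localization with respect to $g(\fp)$, whose kernel is $\{Y\in\CT : g(\fp)\otimes Y=0\}$, this is precisely the statement that $g(\fp)\otimes\LL_{A}X=0$. As $\fp\in B$ was arbitrary, we conclude $g(\fp)\otimes\LL_{A}X=0$ for every $\fp\in B$, i.e.\ $\text{Supp}(\LL_{A}X)\subseteq B^{c}$, so $\LL_{A}X$ lies in the localizing ideal $\CT_{B^{c}}$. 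But $\CT_{B^{c}}$ is exactly the kernel of $\LL_{B}$ by definition, hence $\LL_{B}\LL_{A}X=0$. Since $X$ was arbitrary, $\LL_{B}\LL_{A}=0$.

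There is essentially no obstacle here: the only points worth isolating are the identification of $\ker\LL_{B}$ with $\CT_{B^{c}}$, which is immediate from the definition of $\LL_{B}$, and the remark that visibility is automatic under noetherianity. One could alternatively phrase the argument by decomposing $g(B)=\coprod_{\fp\in B}g(\fp)$ and checking $g(B)$-acyclicity of $\LL_{A}X$ termwise, but this amounts to the same computation.
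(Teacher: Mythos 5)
Your proof is correct and follows the same route as the paper's: noetherianity makes every prime visible (Definition~\ref{def-noetherian-sectionbalmer}), Proposition~\ref{prop-fractureaxiombalmerspectrum} then gives $\LL_{\fp}\LL_A=0$ for each $\fp\in B$, and the conclusion follows. You have simply unpacked the paper's ``this immediately implies'' by observing that $\LL_{\fp}\LL_A X=0$ for all $\fp\in B$ means $g(\fp)\otimes\LL_A X=0$ for all $\fp\in B$, i.e.\ $\LL_A X$ is $g(B)$-acyclic, which is exactly the kernel condition for $\LL_B$.
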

\begin{proof}
Proposition~\ref{prop-fractureaxiombalmerspectrum} and the noetherianity assumption imply that $\LL_{\fp}\LL_A=0$ for any prime $\fp\in B$ . This immediately implies that $\LL_B\LL_A=0$.
\end{proof}
One could wonder if the visibility of the Balmer primes is necessary to ensure $\LL_{\fp}\LL_A=0$ whenever $\fp \not \supseteq \fq$ for all primes $\fq \in A$. Thus, we propose the following example, illustrating a case where the prime $\fp$ is not visible and $\LL_{\fp} \LL_A\neq 0$.

\begin{example}\label{ex-SpcO(2)counterexamplefractureaxiom-sectionstratification}
We saw in Example~\ref{ex-GSpQstratified-sectionbalmer} that $G\da \CS p_{\LQ}$ is a stratified tensor triangulated category. We claim that for $G=O(2)$ the conclusion of Proposition~\ref{prop-fractureaxiombalmerspectrum} fails for a particular Balmer prime $\fp$ which is not visible.
 
We first show that $\text{Spc}(O(2)\da \CS p_{\LQ}^c)\cong (\Gamma O(2), zf)$ is not noetherian. It is a well-known fact but we present a proof for the benefit of the reader.

By tom Dieck's isomorphism (\cite[Ch.~V, Lemma 2.10]{LMSMcC-equivstabhomth}) we have 
\begin{equation} \label{eq-tomdieckisoO2-sectionbousloc}
\pi_0^{O(2)}(S^0)\cong C(\Phi O(2),\LQ)
\end{equation}
and this allows us to split the category of rational $O(2)$-spectra in two parts. Here $\Phi O(2)$ denotes the set of closed subgroups of $O(2)$ with finite Weyl group endowed with the $f$-topology (\cite[Ch.~V, Not. 2.3]{LMSMcC-equivstabhomth}); it consists of an isolated point $[SO(2)]$ and a sequence $\{[D_{2n}] : n\geq 1\}$ converging to $[O(2)]$. Under the isomorphism (\ref{eq-tomdieckisoO2-sectionbousloc}) these two separated subspaces correspond to orthogonal idempotent elements $e_{\CC}, e_{\CD}\in \pi^{O(2)}_0(S^0)$ which give us a decomposition of the whole tensor triangulated category
\[ O(2)\da \CS p_{\LQ}\simeq e_{\CC}(O(2)\da \CS p_{\LQ})\times e_{\CD}(O(2)\da \CS p_{\LQ}).\]
The component $e_{\CC}(O(2)\da \CS p_{\LQ})$ is called the \textit{toral part} of the category. Restricting the $O(2)$-action along $\LT=SO(2)$ provides us with a functor
\[ e_{\CC}(O(2)\da \CS p_{\LQ})\rightarrow \LT\da \CS p_{\LQ}. \]
While forgetting the higher homotopy structure we obtain an action of the residual group $W=O(2)/SO(2)$ on the spectra of the target category. 

Usually this would lead to a loss of information: the $G$-equivariant stable homotopy category is much more than the category of non-equivariant spectra enriched with a $G$-action. But in this situation it can be proved that this forgetful functor actually induces an equivalence
\[ e_{\CC}(O(2)\da \CS p_{\LQ})\simeq \LT \da \CS p_{\LQ}[W].  \]
\cite[Cor.~4.22]{barnes-rationalO2equiv} implies that this equivalence is actually symmetric monoidal.

As far as Balmer spectra and localizations are concerned, this additional action of $W$ has no effect. Hence, the component of the Balmer spectrum provided by $e_{\CC}(O(2)\da \CS p_{\LQ})$ is homeomorphic to $\text{Spc}(\LT\da \CS p_{\LQ}^c)$, which is (non-canonically) isomorphic to $\text{Spc}(D(\mathbb{Z})^c)$ (see Example~\ref{ex-counterexformulaloclim-sectionfinite} below).

Instead, for the \textit{dihedral part} $e_{\CD}(O(2)\da \CS p_{\LQ})$ we have $D_{2n}\not \leq_{ct} O(2)$, hence the corresponding primes form a Balmer spectrum homeomorphic to $\big\{  \frac{1}{n} : n\geq 1 \big\} \cup \{0\}$ which we saw to be not noetherian in Example~\ref{ex-notnoetherianspectalspace-sectionbalmer}. Thus, from now on we focus on this part of the category of rational $O(2)$-spectra.

If the claim of Proposition~\ref{prop-fractureaxiombalmerspectrum} were true, then it would hold that for any subset $A\subseteq \text{Spc}(O(2)\da \CS p_{\LQ}^c)$ consisting of conjugacy classes of finite dihedral subgroups we have $\LL_{O(2)}\LL_{A}=0$. This is not the case if $A$ is infinite. Indeed, for any collection of finite dihedral subgroups we have
\[ \Gamma_AS^0 \cong \bigvee_{[H]\in A}e_{H}S^0\]
where $e_{H}$ is the idempotent in $\pi_0^{O(2)}(S^0)$ corresponding to the isolated point $[H]$ under (\ref{eq-tomdieckisoO2-sectionbousloc}). 

The spectra $e_{H}S^0$ are compact and self-dual. Indeed, by construction we have a splitting $S^0\cong e_HS^0\vee Z$ where $\phi^H(e_HS^0)\cong S^0$, $\phi^HZ=0$, while for a subgroup $K$ not conjugate to $H$ we have $\phi^K(e_HS^0)=0$, $\phi^KZ\cong S^0$. Being summands of a compact object, both $e_HS^0$ and $Z$ are compact. Taking the dual of the splitting gives us $S^0\cong De_HS^0\vee DZ$. For any equivariant compact spectrum taking the dual does not change isotropy: apply \cite[Prop.~2.7~(i)]{balmer-suppfiltr} to the description of Balmer primes given in \cite[Thm.~8.4]{gre-rationalbalmer}. Thus, smashing with $e_HS^0$ the previous equality gives us
\[ e_HS^0\cong e_HDe_HS^0 \wedge e_HDZ\cong e_HDe_HS^0\cong De_HS^0. \]
This allows to us to present an easy description of $\LL_A$.  Observe that the chosen $A$ is a Thomason subset of $\text{Spc}(e_{\CD}(O(2)\da \CS p_{\mathbb{Q}})^c)$. Then, it follows that 
\[\LL_AX=\Lambda_AX\cong \prod_{[H]\in A}F(e_{H}S^0, X)\cong \prod_{[H]\in A}De_{H}S^0\wedge X \cong \prod_{[H]\in A}e_{H}S^0\wedge X.\]
Let $\CF$ be the family generated by all finite dihedral subgroups, so that 
\[E\CF_+\rightarrow S^0\rightarrow \widetilde{E}\CF\]
is the isotropy separation sequence of the dihedral part of the spectra. Identifying $\CF$ with the subset of $\Gamma O(2)$ given by the classes of all finite dihedral subgroups, we have
\[ E\CF_+\cong \Gamma_{\CF}S^0\cong \bigvee_{[K]\in \CF}e_KS^0.\]
Smashing this object with $\Lambda_AX$ and using the fact that the spectra $e_{K}S^0$ are compact self-dual we obtain
\begin{align*}
E\CF_+\wedge \Lambda_AX=&\bigvee_{[K]\in \CF}e_KS^0\wedge \prod_{[H]\in A}e_{H}X  \cong \bigvee_{[K]\in \CF}F\bigg(e_KS^0, \prod_{[H]\in A}e_{H}X\bigg)\cong \\ &\bigvee_{[K]\in \CF} \prod_{[H]\in A}F(e_KS^0, e_{H}X)  \cong \bigvee_{[K]\in \CF} \prod_{[H]\in A} e_KS^0\wedge e_HX\cong \\ &\bigvee_{[H]\in A} e_HX.
\end{align*}
Thus, we deduce that smashing $\Lambda_AX$ with the above isotropy separation sequence we obtain the following exact triangle
\[\bigvee_{[H]\in A}e_{H}X\rightarrow \prod_{[H]\in A}e_{H} X\rightarrow \widetilde{E}\CF\wedge\Lambda_AX.\]
But we have $\LL_{O(2)}\cong \widetilde{E}\CF\wedge -$, therefore  $\LL_{O(2)}\LL_AX$ is the obstruction to the infinite wedge and the infinite product of the objects of $e_{H}X$ coinciding. This is generally not trivial for $A$ infinite.

Indeed, we may compare the homotopy groups
\[ \bigg[S^0, \bigvee_{[H]\in A}e_HX \bigg] \cong \bigoplus_{[H]\in A}[S^0, e_HX]\rightarrow \prod_{[H] \in A}[S^0, e_HX]\cong \bigg[ S^0, \prod_{[H]\in A}e_HX \bigg]. \]
Consider the case $X=S^0$. Since $[S^0, e_HS^0]$ corresponds in $C(\Phi O(2), \mathbb{Q})$ under tom Dieck isomorphism to the span generated by the characteristic function of $[H]$, it must be isomorphic to $\mathbb{Q}$. Thus, the two terms above differ if $A$ is infinite.
\end{example}

Another valid question is if Proposition~\ref{prop-fractureaxiombalmerspectrum} could be improved. That is, it could happen that $\LL_{\fp}\LL_{\fq}=0$ even if $\fp \supset \fq$. We present the next example to show that in a tensor triangulated category with noetherian Balmer spectrum we should expect further refinements to be false in general.
\begin{example}\label{ex-BPcomputationthread-sectionbasics}
We recall Example~\ref{ex-LnSpstratified-sectionbalmer} and consider the tensor triangulated category $L_n \CS p_{(p)}$. The objects $g(\CE_i)=M_{i}S^0$ and $K(i)$ have the same Bousfield class in $L_n \CS p_{(p)}$. Hence, $\LL_i$ is the classical localization with respect to the $i$-th Morava K-theory.

We denote by $BP$ the $p$-local Brown-Peterson spectrum, so that $\pi_*(BP)\cong \mathbb{Z}_{(p)}[v_1,v_2,\dots]$ where $v_i$ are the Araki generators with $|v_i|=2(p^i-1)$. As standard, for a natural number $m$ we denote by $I_m$ the ideal generated by $v_0, v_1,\dots, v_{m-1}$, with $v_0=p$. Let $BP_*\da \text{Mod}$ be the category of graded $BP_*$-modules. For any $m$ we can define the following functor
\[\phi_m\colon BP_*\da \text{Mod}\rightarrow BP_*\da \text{Mod} \qquad M\mapsto (v_m^{-1}M)^{\wedge}_{I_m}, \]
i.e. we first invert the element $v_m$ and then complete with respect to the ideal $I_m$. We will write an arbitrary subset $A\subseteq \{0,1,\dots, n\}$ in the form $A=\{a_1<a_2<\dots<a_k\}$ and we define correspondingly $\phi_A=\phi_{a_1}\phi_{a_2}\dots \phi_{a_k}$. 

Let us consider the truncated spectrum $BP\bc{n}=BP/(v_{n+1}, v_{n+2},\dots)$ which has homotopy groups $\pi_*(BP\bc{n})=\mathbb{Z}_{(p)}[v_1,v_2,\dots, v_n]$. It was proved in \cite[Prop.~4.4.9]{thesis} that we have an isomorphism of $BP_*$-modules
\begin{equation} \label{eq-piLBP} \pi_*(\LL_{a_1}\LL_{a_2}\dots\LL_{a_k}BP\bc{n})\cong \phi_A(BP\bc{n}_*).   \end{equation}
Since this graded module is not trivial for any subset $A$, any further refinement of Proposition~\ref{prop-fractureaxiombalmerspectrum} is not possible.

The proof of \cite[Prop.~4.4.9]{thesis} relies on the computations of \cite[Lemma~2.3]{hovey-bousfield-loc-splitting-conj} and the surrounding discussion. \cite[Lemma~2.3]{hovey-bousfield-loc-splitting-conj} specifically proves $\pi_*(\LL_{m}BP)\cong \phi_m(BP_*)$ but the proof can be adapted to any $BP$-algebra $E$ such that the sequence $v_0, v_1, \dots, v_m$ is regular in $\pi_*(E)$ and $\pi_*(E)/I_m\neq 0$. This way $v_m^{-1}E$ results a $v_m$-periodic Landweber exact spectrum and starting from this the claim can be proven by induction on $|A|$. 

We consider $BP\bc{n}$ so that we can restrict to $BP\bc{n}_*$-modules, where $BP\bc{n}_*$ is noetherian. The noetherianity of the base graded ring $R$ guarantees by the Artin-Rees lemma that if a sequence $x_1,\dots, x_l$ is regular in $R$ and $I\subset R$ is a homogeneous ideal then the same sequence remains regular in the completed ring $R^{\wedge}_I$.
\end{example}

From now on, motivated by Proposition~\ref{prop-fractureaxiombalmerspectrum}, we will consider only stratified tensor triangulated categories with noetherian Balmer spectrum.

Proposition~\ref{prop-fractureaxiombalmerspectrum} states that a composition of localizations $\LL_{\mathbb{A}}$ contains non-trivial information only if there are inclusions between the primes belonging to successive indexing subsets. This intuition motivates the introduction of the following Conjecture~\ref{conj} which states that these chains of prime inclusions identify two compositions of localizations.
\begin{definition}
Let $P$ be a general poset. A \textit{chain} in $P$ consists in a subset $C\subseteq P$ such that the induced order on $C$ is total. We denote by $s(P)$ the set of chains in $P$, including the empty chain. We endow this set with the inclusion ordering, i.e. $C_1\leq  C_2$ if and only if $C_1\subseteq C_2$. Furthermore, we set $s(P)'$ to be the set of non-empty chains.

Given a chain $C$ we define its \textit{dimension} to be $\text{dim}(C)=|C|-1$. We define the \textit{(Krull) dimension} of the poset $P$ to be the supremum of $\text{dim}(C)$ for $C$ ranging among all the chains of $P$.

Given a point $p\in P$ we set its \textit{length} to be the supremum among the dimensions of those chains whose maximal object is $p$. That is, $p$ has finite length $n\in \mathbb{N}$ if there exists a chain $p_0<p_1<\dots<p_{n-1}<p$ and there is not a chain  $p_0<p_1<\dots<p_{n-1}<p_n<p$. If there are chains of arbitrary dimension terminating with $p$, then this point has infinite length.

Let $\mathbb{A}=(A_1,\dots, A_k)$ a $k$-tuple of subsets of $P$, we say that the $k$-tuple $(a_1,\dots, a_k)$ is a \textit{thread} for $\LA$ if $a_i\in A_i$ for  $1\leq i \leq k$ and $a_1\geq a_2\geq \dots \geq a_k$ holds. A chain $T\subseteq P$ is called a \textit{thread set} for $\LA$ if there exist elements $a_i\in T\cap A_i$ forming a thread $(a_1,\dots, a_k)$ for $\LA$. We denote by $T(\LA)$ the collection of thread sets for $\LA$.
\end{definition}

\begin{conjecture}\label{conj}
Let $\CT$ be a rigidly-compactly generated tensor triangulated category which is stratified via the Balmer-Favi support and such that its Balmer spectrum $\emph{Spc}(\CT^c)$ is noetherian and has finite Krull dimension. Suppose we have $\mathbb{A}=(A_1,\dots, A_k)$ and $\mathbb{B}=(B_1,\dots, B_l)$ two tuples of subsets of $\emph{Spc}(\CT^c)$. 

If $T(\LA)=T(\LB)$ then there exists a canonical natural isomorphism $\LL_{\LA}\cong \LL_{\LB}$.
\end{conjecture}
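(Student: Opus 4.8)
The plan is to pass from the functor $\LL_{\LA}$ to purely combinatorial data in two stages and then to induct on the Krull dimension $d=\dim\text{Spc}(\CT^c)$. First I would \emph{trim} the tuples: call $\fp\in A_i$ \emph{relevant} for $\LA$ if $\fp$ occurs as the $i$-th term of some thread of $\LA$, let $A_i^{\mathrm{red}}$ be the set of relevant primes of $A_i$, and set $\LA^{\mathrm{red}}=(A_1^{\mathrm{red}},\dots,A_k^{\mathrm{red}})$. Using that every prime is visible (noetherianity) together with the fracture axiom, Proposition~\ref{prop-fractureaxiombalmerspectrum} and Corollary~\ref{cor-facturenoetherianbalmerspectrum-sectionstratification}, applied strand by strand through the composite $\LL_{A_1}\cdots\LL_{A_k}$, one shows $\LL_{\LA}\cong\LL_{\LA^{\mathrm{red}}}$ by a canonical isomorphism, while obviously $T(\LA)=T(\LA^{\mathrm{red}})$; this is the business of Section~\ref{section-firstreductions}, and after it we may assume $\LA$ and $\LB$ are reduced. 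The target of the second stage is a homotopy-limit formula
\[\LL_{\LA}X\;\cong\;\holim_{T}\;\LL_{c(T)}X,\]
where $T$ runs over a finite poset built from $T(\LA)$ and $c(T)\subseteq\text{Spc}(\CT^c)$ is a subset depending only on $T$ and $T(\LA)$, with the structure maps coming from the universal properties of the localizations; this is the analogue of \cite[Thm.~1.8]{itloc}. For a \emph{finite} Balmer spectrum this formula is exactly what the paper establishes in Section~\ref{section-finitecase} via the fracture-cube argument of \cite{itloc}, and it immediately yields the conjecture in case (a).

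For the general case I would induct on $d$. When $d=0$ the spectrum is discrete, each $\LL_{A_i}$ is the smashing projection onto the clopen piece $A_i$, these commute, and $\LL_{\LA}\cong\LL_{A_1\cap\cdots\cap A_k}$; since every thread of a reduced tuple is constant, $T(\LA)$ is precisely the collection of singletons $\{\fp\}$ with $\fp\in\bigcap_i A_i$, so $T(\LA)=T(\LB)$ forces $\bigcap_i A_i=\bigcap_j B_j$ and we are done. For $d>0$ let $Z\subseteq\text{Spc}(\CT^c)$ be the Thomason set of minimal primes and $U=Z^c$ its open complement: $L_U\CT$ is stratified with $\dim\text{Spc}(L_U\CT^c)=d-1$, so the conjecture holds for $L_U\CT$ by induction, and $\Gamma_Z\CT$ is concentrated on the discrete set $Z$, the $d=0$ situation. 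For a single subset $A$ one decomposes $\LL_A$ into its ``$A\cap Z$ part'' and its ``$A\cap U$ part'' via Proposition~\ref{prop-LambdaALBc=convexloc} and a Tate/fracture square, the latter part being pulled back from $L_U\CT$. Substituting this decomposition into each of the $k$ factors of $\LL_{A_1}\cdots\LL_{A_k}$ and sorting the resulting $2^k$-fold iterated fracture square should express $\LL_{\LA}$ as a finite homotopy limit whose vertices are composites built out of (i) $Z$-localizations and (ii) localizations on $L_U\CT$ --- both already understood --- after which one checks that the indexing diagram and the labels at the vertices depend only on $T(\LA)$, using that each thread of $\LA$ is classified by the position at which it crosses from $U$ into $Z$. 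This strategy, carried out by hand, is what gives cases (b) and (c) of the Theorem: in dimension $1$ the set $U$ is discrete, and in dimension $2$ with finitely many minimal primes the $Z$-layer is finite and the $U$-layer has dimension $1$.

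The genuinely hard point is the ``sorting'' step: the fracture squares attached to the different factors $A_i$ do not commute on the nose, so assembling the iterated decomposition into one finite homotopy limit over a diagram governed exactly by $T(\LA)$ requires controlling these interactions. Finite Krull dimension bounds the depth of the nested fracture squares, but when $Z$ is infinite the inner homotopy limits over the $Z$-localizations need not simplify --- this is the phenomenon isolated in Example~\ref{ex-SpcO(2)counterexamplefractureaxiom-sectionstratification}, where an infinite family of closed points produces a nontrivial obstruction between an infinite wedge and an infinite product --- and it is the reason the full conjecture stays out of reach by this route. A complete proof would likely have to bypass the homotopy-limit bookkeeping altogether and instead identify the kernel and essential image of $\LL_{\LA}$ intrinsically, in terms of the supports cut out by the thread sets $T(\LA)$, so that $T(\LA)=T(\LB)$ gives the isomorphism directly; I expect that identification, uniform in the number of minimal primes and in the dimension, to be the main obstacle.
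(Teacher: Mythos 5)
Your first two stages do track the paper: the trimming of tuples to the primes occurring in threads is exactly the content of Section~\ref{section-firstreductions} (the operators $\tau,\beta,\gamma,\delta$ and Proposition~\ref{prop-LA=LdeltaA-sectionreduction}), and the chain-indexed homotopy-limit formula over a finite spectrum is exactly Proposition~\ref{prop-LambdaAholimchains-sectionfinitecase} and the dictionary translation of \cite{itloc}, which gives case (a). But from there your plan has a genuine gap, and it is not the route the paper takes for cases (b) and (c). You propose to induct on dimension by splitting along $Z=$ the (specialization-closed) set of minimal primes and its complement $U$, feeding a $Z/U$ fracture square into each of the $k$ factors and then ``sorting'' the resulting $2^k$-fold iterated squares into a single finite homotopy limit governed by $T(\LA)$. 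That sorting step is precisely what you leave unproved, and it is not a technicality: already in dimension $1$ the layer $Z$ is typically infinite and discrete (e.g.\ $D(\mathbb{Z})$, or the toral part of Example~\ref{ex-counterexformulaloclim-sectionfinite}), which is exactly the regime where the wedge-versus-product obstruction you yourself cite makes the inner $Z$-layer limits fail to simplify. So the claim that ``this strategy, carried out by hand, is what gives cases (b) and (c)'' is not substantiated, and case (b) is the case where your chosen decomposition is most exposed to the obstruction.

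The paper avoids this entirely by never fracturing along a closed (specialization-closed) layer. Its only gluing device is Lemma~\ref{lem-conjecturereducedthomasoncomplements-sectionlowdim}: a covering of the spectrum by \emph{finitely many complements of Thomason subsets}, for which the $L_{Z_i}$ are smashing and commute (Remark~\ref{rmk-conditionfracturecube-sectionreduction}(ii)) and each $L_Z\CT$ is again a stratified category with spectrum $Z$. The inductions are then on the number of maximal primes (finite by Corollary~\ref{cor-finmanyirrednoethsectionlowdim}), with the overlap locus kept finite by Lemma~\ref{lemma-intersectionmaxprimesfinite-sectionlowdim} so that Corollary~\ref{cor-conjturefinitecase-sectionfinitecase} applies there; and the irreducible base cases are not handled by any limit formula at all, but by explicitly classifying the $\gamma\delta$-reduced tuples and checking that the collection of thread sets pins down the reduced form (the three forms in Proposition~\ref{prop-conjtruedim1irred-sectionlowdim}, and the eleven forms in Proposition~\ref{prop-conjtruedim2irredonemin-sectionlowdim}, the latter requiring the extra isomorphisms of Lemma~\ref{lem-specialcesesconjdim2-sectionlowdim} and Corollary~\ref{cor-LtALtAcapBmLBm=LtALBm-sectionlowdim}). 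Your proposal contains no substitute for these classification arguments, so even granting your $Z/U$ squares, the dimension-2 base case with one maximal and one minimal prime would remain unproved. Finally, since the statement is a conjecture, no complete proof exists in the paper either; but as an argument for the cases the paper does prove, your write-up establishes only case (a) and the zero-dimensional case, with (b) and (c) resting on an unexecuted and, as stated, unworkable assembly step.
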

To conclude the section, we make the following observations on the Conjecture.
\begin{remark}
The requirement $T(\LA)=T(\LB)$ spelled out means that for every $T\subseteq \text{Spc}(\CT^c)$ we have the tuple $(T\cap A_1,\dots, T\cap A_k)$ admits a thread $\fp_1\supseteq \dots \supseteq \fp_k$ if and only if $(T\cap B_1,\dots, T\cap B_l)$ admits a thread $\fq_1\supseteq \dots \supseteq \fq_l$. However, the two decreasing sequences do not have to be the same and the primes $\fp_i$ and $\fq_j$ do not have to be related in any specific way. Only their existence is required.
\end{remark}
\begin{remark}\label{rmk-isoconjcanonical-sectionbasics}
The isomorphism $\LL_{\LA}\cong \LL_{\LB}$ should be canonical in the sense that it should be compatible with all the natural transformations between iterated localizations arising from their intrinsic properties. Thus, it will make commute all the reasonable diagrams presenting such transformations.

For example, if we have a tuple $\LC=(C_1,\dots, C_m)$ such that there are indexes $1\leq a_1\leq a_2\leq \dots \leq a_m\leq k$ with $A_{a_i}\subseteq C_i$, then there exists an induced natural transformation $\LL_{\LC}\Rightarrow \LL_{\LA}$.  Similarly, if there are indexes $1\leq b_1\leq \dots \leq b_m\leq l$ such that $B_{b_j}\subseteq C_j$, then we have $\LL_{\LC}\Rightarrow \LL_{\LB}$. In this case we require the above isomorphism to fit in the commutative diagram
\begin{center}
\begin{tikzcd}
& \LL_{\LC}\arrow[dl, Rightarrow] \arrow[dr, Rightarrow] & \\
 \LL_{\LA} \arrow[rr, Rightarrow, "\cong" above]& &\LL_{\LB}.
\end{tikzcd}
\end{center}
In practice, the invoked isomorphism will arise from the universal properties of the localizations or expressing the compositions of localizations as the homotopy limit of a diagram of other iterated localizations. Therefore, the commutativity of all diagrams we require will be automatic.
\end{remark}

\begin{remark}
We do not claim that the condition $T(\LA)=T(\LB)$ should be necessary to have $\LL_{\LA}\cong \LL_{\LB}$. It is not clear in full generality why two isomorphic compositions should have the same thread sets, even if this seems to be the case in all concrete examples where computations are feasible. 

In Example~\ref{ex-BPcomputationthread-sectionbasics} the formula (\ref{eq-piLBP})
implies the iterated localizations $\LL_{a_1}\dots\LL_{a_k}$ are distinct for different increasing sequences of indexes since the graded modules $\phi_A(BP\bc{n}_*)$ can be proven to be all distinct.

Also, see Remark~\ref{rmk-dim1samethreadssufficient-sectionlowdim} below for observations in the case where the Balmer spectrum has dimension 1.
\end{remark}

\section{The first reductions}\label{section-firstreductions}
We now provide starting results which allow us to reduce the subsets of a tuple $\LA$ without changing the isomorphism class of the associated iterated localization $\LL_{\LA}$. 

\begin{remark}\label{rmk-categorywithmodel-sectionreduction}
In the following proofs we will need to form homotopy limits: to do this from now on we will assume our tensor triangulated category $\CT$ admits a model. That is, it is the homotopy category $\text{Ho}(\CC)$ of $\mathcal{C}$ a stable monoidal quasi-category or a stable monoidal model category. This will guarantee a well-behaved calculus of homotopy limits. I.e., we can construct a strong stable derivator with underlying category $\CT$ so that the homotopy limits follow the properties of Kan extensions in derivator theory (functoriality, Kan formula, et cetera). See \cite[Def.~1.10]{groth-pointed-der}, \cite[Example~1.5]{groth-pointed-der} and \cite[Example~1.6]{groth-pointed-der}.

Since in mathematical practice the most important stratified tensor triangulated categories admit a model, this assumption is not restrictive. Indeed, the categories presented in Example~\ref{ex-DRnoethstratified-sectionbalmer} (\cite[Thm.~2.3.11]{hovey-model}), Example~\ref{ex-LnSpstratified-sectionbalmer} (\cite[Thm.~4.1]{ekmm} and \cite[Cor.~4.10]{ekmm}) and Example~\ref{ex-GSpQstratified-sectionbalmer} (\cite[Thm.~4.2]{equivorthspe}) are the homotopy categories of a model. 

Moreover, we will liberally use the fact that given a tensor triangulated category admitting a model, then its localization still admits a model. For references see \cite[Thm.~4.1.1]{hirch} and \cite[Prop.~5.5.4.15]{highertopos}.
\end{remark}

\begin{lemma}\label{lem-fracturecube-sectionreduction}
Let $\CT$ be a stratified tensor triangulated category with noetherian Balmer spectrum and let $A_i\subseteq\emph{Spc}(\CT^c)$ for $1\leq i \leq n$ be subsets of the Balmer spectrum. Suppose the following property holds: fixed $i$ and fixed a collection $J$ of indexes $j<i$ we have that  $\forall X \in \CT$  the equality $\LL_{A_{i}}X=0$ implies $\LL_{A_{i}}\LL_{B}X=0$ for $B=\bigcup_{j\in J}A_j$.

We set $A=\bigcup_{i=1}^nA_i$. Then, $\LL_{A}X$ can be expressed as the homotopy limit of the punctured cube
\begin{align*}
\CP(\{1,2,\dots, n\})'&\rightarrow \CT\\
S=\{i_1<\dots<i_k\}&\mapsto \LL_{A_{i_1}}\LL_{A_{i_2}}\dots \LL_{A_{i_k}}X.
\end{align*}
\end{lemma}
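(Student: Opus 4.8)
The idea is to build the punctured-cube diagram abstractly from the localization functors $\LL_{A_i}$ and identify its homotopy limit with $\LL_A$ by checking the defining universal property of a Bousfield localization: the canonical map $X \to \holim_{S} \LL_{A_{i_1}}\cdots\LL_{A_{i_k}}X$ has $g(A)$-local target and $g(A)$-acyclic cofiber. First I would set up the $n$-cube functor $\CP(\{1,\dots,n\}) \to \CT$, $S \mapsto L_S X$, where $L_\emptyset = \mathrm{id}$ and $L_S = \LL_{A_{i_1}}\cdots\LL_{A_{i_k}}$ for $S = \{i_1<\dots<i_k\}$, with structure maps the unit natural transformations $\mathrm{id} \Rightarrow \LL_{A_i}$ inserted in the appropriate slot; the hypothesis on $J$ is exactly what makes these composites well-defined and the cube (strictly, up to coherent homotopy in $\CC$) commute, since it lets one commute a localization past the others it follows in the word and guarantees $\LL_{A_i}\LL_B = \LL_{A_i}\LL_B\LL_{A_i}$-type identities needed for the faces to agree. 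Then $\holim$ over the full cube $\CP(\{1,\dots,n\})$ is $X$ itself (the cube has an initial vertex), so $\holim$ over the punctured cube $\CP(\{1,\dots,n\})'$ fits in a fiber sequence with $X$ and the total fiber of the cube; the claim $\LL_A X \simeq \holim_{\CP'} L_S X$ is equivalent to saying the total fiber of the $n$-cube is $\Sigma^{-1}$ of the $g(A)$-acyclization of $X$, i.e. the cofiber of $X \to \holim_{\CP'} L_S X$ is $g(A)$-acyclic and the target is $g(A)$-local.

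For locality of the target: each $L_S X$ is $g(A_{i_k})$-local, hence in particular $\holim_{\CP'} L_S X$ is $g(\fp)$-local for every $\fp \in \bigcup_i A_i = A$ — more carefully, a homotopy limit of $g(\fp)$-local objects is $g(\fp)$-local, and for each $\fp \in A$ there is a cofinal subcube (those $S$ containing the index $i$ with $\fp \in A_i$) consisting of $g(\fp)$-local objects, while the remaining vertices can be absorbed using the unit maps; so the homotopy limit is $g(A)$-local. For acyclicity of the cofiber one argues by induction on $n$: strip off the index $n$, writing the $n$-cube as a map of $(n-1)$-cubes from $S \mapsto L_S X$ (for $S \subseteq \{1,\dots,n-1\}$) to $S \mapsto L_{S\cup\{n\}} X = L_S(\LL_{A_n}X)$. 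By induction the total fibers of these two $(n-1)$-cubes are $\Sigma^{-1}\Gamma_{A'}X$ and $\Sigma^{-1}\Gamma_{A'}(\LL_{A_n}X)$ for $A' = \bigcup_{i<n}A_i$, and the total fiber of the $n$-cube is the fiber of the induced map $\Gamma_{A'}X \to \Gamma_{A'}\LL_{A_n}X$; combining with the acyclization triangle for $A_n$ and the support formula from \cite[Lemma~2.13]{BHS-stratification}, one checks the support of this fiber is contained in $A = A' \cup A_n$, whence by stratification it is the $g(A)$-acyclization, which is exactly what is needed. The hypothesis on $J$ re-enters here to ensure that $\LL_{A_n}$ and the $\LL_{A_j}$ for $j<n$ interact so that $L_S(\LL_{A_n}X)$ is genuinely the $S\cup\{n\}$-vertex and the induction on the sub-cubes applies with the shifted set of subsets.

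The main obstacle I anticipate is not the support bookkeeping but the coherence: in a triangulated category a punctured $n$-cube does not in general have a well-defined homotopy limit, which is precisely why the excerpt passes to $\Ho(\CC)$ for $\CC$ a stable quasi-category or model category. One must genuinely produce the cube as a functor out of the poset $\CP(\{1,\dots,n\})'$ into $\CC$ (not merely a diagram in $\CT$ commuting up to non-coherent homotopy), and the hypothesis ``$\LL_{A_i}X = 0 \Rightarrow \LL_{A_i}\LL_B X = 0$'' has to be leveraged at the $\infty$-categorical level to show the relevant composites of localization functors are canonically equivalent after rectification; once the cube is rectified, $\holim$ is well-behaved and the rest is the inductive support computation above. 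A secondary subtlety is the base case and the precise identification of total fibers with shifted acyclization functors, which should follow from the octahedral/fracture-square argument already used in the proof of Proposition~\ref{prop-LambdaALBc=convexloc} applied one index at a time.
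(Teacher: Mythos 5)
The paper does not give an independent proof here: it observes that the hypothesis is precisely the fracture axiom needed for the chromatic fracture cube and cites \cite[Prop.~7.12]{itloc} verbatim, translating $K(i) = g(A_i)$ and $\coprod_i K(i) = g(A)$. Your proposal attempts to reconstruct that cited argument, and the overall strategy --- verify the universal property of $\LL_A$ by checking the target is $g(A)$-local and the total fiber is $g(A)$-acyclic, using the inductive decomposition of the $n$-cube into two $(n-1)$-cubes --- is indeed the right one. However, there are concrete errors in the details that would need fixing.

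First, the locality step is stated with the indices in the wrong direction and then over-engineered. For $S=\{i_1<\dots<i_k\}$ the vertex $L_SX = \LL_{A_{i_1}}\cdots\LL_{A_{i_k}}X$ has $\LL_{A_{i_1}}$ applied \emph{outermost}, so it is $g(A_{i_1})$-local, not $g(A_{i_k})$-local. Moreover your claim that $g(A_i)$-locality together with $\fp\in A_i$ gives $g(\fp)$-locality runs the inclusions backwards: since $A_{i_1}\subseteq A$, every $g(A)$-acyclic object is $g(A_{i_1})$-acyclic, and therefore a $g(A_{i_1})$-local object is $g(A)$-local. The conclusion you want (each vertex $L_SX$, $S\neq\emptyset$, is $g(A)$-local, hence so is the homotopy limit) is true, but by this simpler direct argument; the detour through cofinal subcubes and $g(\fp)$-locality is both unnecessary and not correct as written.

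Second, for the acyclicity of the total fiber the support should land in $A^c$, not in $A$: being $g(A)$-acyclic means $\mathrm{Supp}\cap A=\emptyset$. More importantly, the place where the hypothesis "$\LL_{A_i}X=0\Rightarrow\LL_{A_i}\LL_BX=0$" actually enters is not in guaranteeing coherence of the cube (that is the job of the geometric-model assumption, used again in Prop.~\ref{prop-LambdaAholimchains-sectionfinitecase}); it enters in showing the total fiber is $g(A_n)$-acyclic and not only $g(A')$-acyclic. Unwinding your induction, the total fiber of the $n$-cube is $\mathrm{acy}_{A'}\mathrm{acy}_{A_n}X$ where $A'=\bigcup_{i<n}A_i$ and $\mathrm{acy}$ denotes the acyclization functor. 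The image of $\mathrm{acy}_{A'}$ is automatically $g(A')$-acyclic, but to conclude $\mathrm{acy}_{A'}\mathrm{acy}_{A_n}X$ is also $g(A_n)$-acyclic one applies $\LL_{A_n}$ to the triangle $\mathrm{acy}_{A'}Y\to Y\to\LL_{A'}Y$ with $Y=\mathrm{acy}_{A_n}X$ and uses exactly the hypothesis with $i=n$ and $J=\{1,\dots,n-1\}$ to kill the third term. Without this step the lemma is false --- the arithmetic fracture square in $D(\mathbb{Z}_{(p)})$ fails when the order is reversed so that $\{(p)\}$ is the outermost localization, even though both orders yield coherent cubes.
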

\begin{proof}
This is the usual argument for the chromatic fracture cube. It states that given the Bousfield classes $\bc{K(1)},\dots, \bc{K(n)}$ such that $\LL_{K(i)}X=0$ implies the equality $\LL_{K(i)}\LL_{E(J)}X=0$, where $J$ is an arbitrary set of indexes $j<i$ and  $\bc{E(J)}=\bigvee_{j\in J}\bc{K(j)}$,  then the localization of $X$ with respect to $\bigvee_{i=1}^n\bc{K(i)}$ can be expressed as the homotopy limit of the cube presenting at its vertices the compositions of localizations $\LL_{K(i_1)}\dots \LL_{K(i_k)}X$ for all the increasing sequences $i_1<\dots<i_k$. A complete proof is presented in \cite[Prop.~7.12]{itloc}.

In our case $K(i)=g(A_i)$ and $\coprod_{i=1}^nK(i)=g(A)$.
\end{proof}
\begin{remark}\label{rmk-conditionfracturecube-sectionreduction}
The crucial condition that $\LL_{A_i}X=0$ implies $\LL_{A_i}\LL_{B}X=0$ for $B=\bigcup_{j\in J}A_j$  where $J$ is a set of indexes $j<i$ is easily seen to be satisfied in these two situations:
\begin{itemize}
\item[(i)] when we take the subsets $A_i$ in such a way that fixed an index $i$ for any $j<i$ it holds that $\forall \fp \in A_i, \fq \in A_j$ we have $\fp\not \supseteq \fq$. Then, Corollary~\ref{cor-facturenoetherianbalmerspectrum-sectionstratification} implies the condition is satisfied.

\item[(ii)] When all the $A_i$'s are complements of Thomason subsets. In this case the localizations $\LL_{A_i}=L_{A_i}$ are smashing. Also, $B=\bigcup_{j\in J}A_j$ is the complement of a Thomason subset for any set of indexes $J$, hence $\LL_B$ is smashing as well. We conclude $\LL_{A_i}$ and $\LL_B$ commute.
\end{itemize}
\end{remark}

\begin{proposition}\label{prop-LALB=LAcapgeqBLB-sectionreduction}
Let $\CT$ be a stratified tensor triangulated category with noetherian Balmer spectrum. Let $A, B$ be two subsets of $\emph{Spc}(\CT^c)$, then we have 
\[ \LL_A\LL_B\cong \LL_{A\cap[\supseteq B]}\LL_B \]
where we recall $[\supseteq B]=\{ \fp \in  \emph{Spc}(\CT^c) : \exists \fq \in B \ \fp \supseteq \fq \}$.
\end{proposition}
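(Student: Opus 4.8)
The plan is to show that the two composite localizations $\LL_A\LL_B$ and $\LL_{A\cap[\supseteq B]}\LL_B$ have the same effect on every object $X\in\CT$, and indeed are connected by a canonical natural transformation which is an isomorphism. The key point is that applying $\LL_B$ first restricts the support of $\LL_B X$ in a way that makes the primes of $A$ outside $[\supseteq B]$ irrelevant. Concretely, write $A' = A\cap[\supseteq B]$ and $A'' = A\setminus A' = A\cap[\not\supseteq B]$, so $A = A'\sqcup A''$ and $g(A)\cong g(A')\amalg g(A'')$. Since $\LL_A$ is the Bousfield localization at $g(A)$ and $\LL_{A'}$ the one at $g(A')$, and $g(A')$ is (up to Bousfield class) a summand of $g(A)$, there is a canonical natural transformation $\LL_A \Rightarrow \LL_{A'}$; I would compose with this to get $\LL_A\LL_B \Rightarrow \LL_{A'}\LL_B$ and argue it is an isomorphism.

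First I would reduce to checking that $\LL_{A''}$ acts trivially after $\LL_B$, i.e.\ that $\LL_{A''}\LL_B = 0$, or more precisely that $g(\fp)\otimes \LL_B X = 0$ for every $\fp\in A''$. The engine for this is Proposition~\ref{prop-fractureaxiombalmerspectrum}: for a visible prime $\fp$ with $\fp\not\supseteq\fq$ for all $\fq\in B$ we have $\LL_\fp\LL_B = 0$. Since $\fp\in A'' = A\cap[\not\supseteq B]$ means exactly $\fp\not\supseteq\fq$ for all $\fq\in B$, this applies for visible $\fp$; and under our standing assumption that $\text{Spc}(\CT^c)$ is noetherian every prime is visible (Definition~\ref{def-noetherian-sectionbalmer}), so in fact $\LL_\fp\LL_B X = 0$ for all $\fp\in A''$, which gives $g(\fp)\otimes\LL_B X = 0$. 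Hence $\LL_B X$ is already $g(A'')$-acyclic, so $g(A)\otimes(-)$ and $g(A')\otimes(-)$ have the same acyclics and locals relative to $\LL_B X$: the canonical map $\LL_B X \to \LL_{A}\LL_B X$ and $\LL_B X \to \LL_{A'}\LL_B X$ have the same (up to canonical iso) target. More carefully: $\LL_{A}\LL_B X$ is the $g(A)$-localization of $\LL_B X$, and since $\LL_B X$ is $g(A'')$-acyclic, a $g(A)$-equivalence out of it is the same as a $g(A')$-equivalence, and $g(A)$-locality for objects receiving a map from $\LL_B X$ coincides with $g(A')$-locality; therefore the two localizations agree canonically.

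The cleanest way to run the last step is probably to verify directly, as in the proof of Proposition~\ref{prop-LambdaALBc=convexloc}, that the composite $\LL_B X \to \LL_{A'}\LL_B X$ is a $g(A)$-localization: its target is $g(A)$-local because $g(A')$-locality plus the fact that $g(A'')$-acyclicity is automatic for its cofiber (the cofiber is a $g(A')$-acyclic object built from $\LL_B X$ and $\LL_{A'}\LL_B X$, both of which are $g(A'')$-acyclic — $\LL_B X$ by the above, $\LL_{A'}\LL_B X$ because $\LL_{A'}$ kills support outside... wait, no: $\LL_{A'}$ does not kill $g(A'')$ in general). So instead I would argue: the cofiber $C$ of $\LL_B X \to \LL_{A'}\LL_B X$ is $g(A')$-acyclic by definition of the localization; I need it also $g(A'')$-acyclic. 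Since $\LL_B X$ is $g(A'')$-acyclic, it suffices that $\LL_{A'}\LL_B X$ is $g(A'')$-acyclic, i.e.\ $g(\fp)\otimes\LL_{A'}\LL_B X = 0$ for $\fp\in A''$. But $\LL_{A'}\LL_B X$ is the $g(A')$-localization of the $g(A'')$-acyclic object $\LL_B X$; applying Proposition~\ref{prop-fractureaxiombalmerspectrum} once more (with $A'$ in the role of the second localization's index, noting $\fp\in A''$ still satisfies $\fp\not\supseteq\fq$ for $\fq\in A'\subseteq A$... hmm, $A'\subseteq[\supseteq B]$, not $\subseteq B$) — the cleanest fix is to observe $\LL_{A'}\LL_B\cong\LL_{A'}\LL_B$ and note that every prime in the support of $\LL_{A'}\LL_B X$ lies in $[\supseteq B]$ (because the support of $\LL_{A'}(-)$ of anything lands in... actually support can only shrink or stay under localization at $g(A')$? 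No — it lands in $A'$? No, $\LL_{A'}$ makes things $g(A')$-local, whose supports are in... this requires care). Given these subtleties, the cleanest route, and the one I would actually write, is: $\LL_{A'}\LL_B X$ receives a canonical map from $\LL_B X$ whose cofiber is $g(A')$-acyclic; tensoring with $g(\fp)$ for $\fp\in A''$, and using that $g(\fp)\otimes\LL_B X = 0$, we get $g(\fp)\otimes\LL_{A'}\LL_B X \cong g(\fp)\otimes(\text{cofiber})$; but the cofiber is $g(A')$-acyclic and I separately need $g(\fp)$-acyclic — this is where the main obstacle lies, and I expect to resolve it by an inductive/support-theoretic argument showing $\supp(\LL_{A'}\LL_B X)\subseteq[\supseteq B]\cap\overline{A'}$ using $\LL_{A'}\LL_B$-locality and Remark~\ref{rmk-thomasondownwardclosed-sectionbalmer} together with \cite[Lemma~2.18]{BHS-stratification}. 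Once $C$ is shown $g(A)$-acyclic, $\LL_B X\to\LL_{A'}\LL_B X$ satisfies both defining properties of the $g(A)$-localization and we conclude $\LL_A\LL_B\cong\LL_{A'}\LL_B$ canonically by uniqueness of Bousfield localizations.

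The main obstacle, as indicated, is controlling the support of $\LL_{A'}\LL_B X$ well enough to see the cofiber $C$ is $g(A'')$-acyclic and not merely $g(A')$-acyclic; I expect this to follow from a short support computation using that $\LL_{A'}$ is the localization away from a localizing ideal whose support is $(A')^c$, combined with the fact established above that $\LL_B X$ has support inside $[\supseteq B]$, so that $\LL_{A'}\LL_B X$ has support inside $A'\cup((A')^c\cap[\supseteq B])=[\supseteq B]$ — hmm, that still meets $A''$ possibly; the genuinely robust argument is simply the one from Proposition~\ref{prop-LambdaALBc=convexloc}'s proof, tensoring with a compact witness $\sigma_\fp$ and using duality, which I would transcribe with $A,B$ here in place of the Thomason subsets there.
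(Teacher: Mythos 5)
Your proof has the right skeleton but stalls at the key combinatorial observation, which you circle around several times without ever pinning down. You correctly reduce to showing that the cofiber $C$ of $\LL_BX\to \LL_{A'}\LL_BX$ is $g(A'')$-acyclic, and you correctly reduce that to showing $\LL_{A'}\LL_BX$ is $g(A'')$-acyclic (since $\LL_BX$ already is). You even identify the right tool: apply Proposition~\ref{prop-fractureaxiombalmerspectrum} with $A'$ as the localizing index and a prime $\fp\in A''$. But then you write ``noting $\fp\in A''$ still satisfies $\fp\not\supseteq\fq$ for $\fq\in A'$\dots hmm, $A'\subseteq[\supseteq B]$, not $\subseteq B$'' and abandon the route. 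The hypothesis in fact \emph{does} hold, and checking it is elementary: if $\fp\in A''$ and $\fp\supseteq\fq$ for some $\fq\in A'$, then by definition of $A'$ there is $\mathfrak r\in B$ with $\fq\supseteq\mathfrak r$, hence $\fp\supseteq\mathfrak r$, hence $\fp\in A\cap[\supseteq B]=A'$, contradicting $\fp\in A''=A\setminus A'$. So for every $\fp\in A''$ and $\fq\in A'$ we have $\fp\not\supseteq\fq$; by Corollary~\ref{cor-facturenoetherianbalmerspectrum-sectionstratification} this gives $\LL_{A''}\LL_{A'}=0$, i.e.\ $\LL_{A'}Y$ is $g(A'')$-acyclic for \emph{every} $Y$. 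In particular $\LL_{A'}\LL_BX$ is $g(A'')$-acyclic, $C$ is $g(A)$-acyclic, and your universal-property verification closes.

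Your later mis-step (``hmm, that still meets $A''$ possibly'') is also incorrect: $A''=A\setminus(A\cap[\supseteq B])$ is disjoint from $[\supseteq B]$ by construction, so a support bound inside $[\supseteq B]$ would indeed suffice. This is all the same fact in disguise. Once the observation $\LL_{A''}\LL_{A'}=0$ is in hand, your direct ``check both defining properties of a Bousfield localization'' route is a perfectly valid alternative to the paper's argument, which instead invokes the fracture cube (Lemma~\ref{lem-fracturecube-sectionreduction}) to produce a homotopy pullback square
\[
\LL_AX \rightarrow \LL_{A'}X, \quad \LL_AX\rightarrow\LL_{A''}X, \quad \LL_{A''}X\rightarrow\LL_{A'}\LL_{A''}X,
\]
and then specializes to $X=\LL_BX'$ to kill the bottom row. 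Both arguments hinge on precisely the same two vanishings $\LL_{A''}\LL_{A'}=0$ and $\LL_{A''}\LL_B=0$; yours trades the homotopy-limit machinery for an explicit verification of acyclicity of the cofiber, which is slightly more elementary. As written, though, the proposal is incomplete because the decisive combinatorial fact is asserted as uncertain and left unproved.
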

\begin{proof}
Set $A'=A\cap[\supseteq  B]$ and $A''=A\setminus A'$. It follows that $\forall \fp \in A''$ there is not a prime $\fq \in A'$ such that $\fp \supseteq \fq$. Otherwise, by definition, there would exist $\mathfrak{r} \in B$ with $\fq \supseteq \mathfrak{r}$ and it would follow $\fp \supseteq \fq \supseteq \mathfrak{r}$, hence $\fp \in A'$.

Corollary~\ref{cor-facturenoetherianbalmerspectrum-sectionstratification} states $\LL_{A''}	\LL_{A'}=0$, hence by Lemma~\ref{lem-fracturecube-sectionreduction} there exists for any object $X$ a homotopy pullback square of the form
\begin{center}
\begin{tikzcd}
\LL_{A}X \arrow[r] \arrow[d] & \LL_{A'}X \arrow[d] \\
\LL_{A''}X \arrow[r] & \LL_{A'}\LL_{A''}X. \arrow[ul, phantom, "\ulcorner" very near end]
\end{tikzcd}
\end{center}
If we consider this square for $X=\LL_BX'$, then Corollary~\ref{cor-facturenoetherianbalmerspectrum-sectionstratification} yields that the bottom row becomes an isomorphism between two zero objects. Therefore, the upper horizontal natural transformation $\LL_A\LL_B \Rightarrow \LL_{A'}\LL_B$ is an isomorphism.
\end{proof}

\begin{proposition}\label{prop-LALB=LALBcupnotleqA-sectionreduction}
Let $\CT$ be a stratified tensor triangulated category with noetherian Balmer spectrum. Let $A,B$ be any two subsets of $\emph{Spc}(\CT^c)$, then it holds
\[ \LL_A\LL_B\cong \LL_A\LL_{B \cup [\not \subseteq A]} \]
where $[\not \subseteq A]=\{ \fp \in \emph{Spc}(\CT^c) : \forall \fq \in A \ \fp \not \subseteq \fq  \}$.
\end{proposition}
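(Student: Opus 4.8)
The plan is to route both $\LL_A\LL_B$ and $\LL_A\LL_{B\cup C}$, where we abbreviate $C=[\not\subseteq A]$, through a common ``trimmed'' composition rather than comparing them directly. Concretely, I would first prove the following. \emph{Claim:} for every subset $D\subseteq\text{Spc}(\CT^c)$ there is a natural isomorphism $\LL_A\LL_D\cong\LL_A\LL_{D\cap[\subseteq A]}$, where $[\subseteq A]=[\not\subseteq A]^c=\{\fp:\exists\fq\in A,\ \fp\subseteq\fq\}$. Granting the Claim the Proposition is immediate: since $[\subseteq A]\cap[\not\subseteq A]=\emptyset$ we have $(B\cup C)\cap[\subseteq A]=(B\cap[\subseteq A])\cup\emptyset=B\cap[\subseteq A]$, so both $\LL_A\LL_B$ and $\LL_A\LL_{B\cup C}$ are naturally isomorphic to $\LL_A\LL_{B\cap[\subseteq A]}$.

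To prove the Claim I would apply the fracture cube of Lemma~\ref{lem-fracturecube-sectionreduction} to the disjoint partition $D=D_1\sqcup D_2$ with $D_1=D\cap[\not\subseteq A]$ and $D_2=D\cap[\subseteq A]$. First one checks that no prime of $D_2$ contains a prime of $D_1$: if $\fp\in D_2$ then $\fp\subseteq\mathfrak{r}$ for some $\mathfrak{r}\in A$, while $\fq\in D_1$ forces $\fq\not\subseteq\mathfrak{s}$ for every $\mathfrak{s}\in A$, so $\fp\supseteq\fq$ would give $\fq\subseteq\mathfrak{r}\in A$, a contradiction. Hence, by Remark~\ref{rmk-conditionfracturecube-sectionreduction}(i) (which invokes Corollary~\ref{cor-facturenoetherianbalmerspectrum-sectionstratification}), the hypothesis of Lemma~\ref{lem-fracturecube-sectionreduction} holds for the pair $(D_1,D_2)$, and for every $X$ we obtain a natural homotopy pullback square expressing $\LL_D X$ as $\LL_{D_1}X\times_{\LL_{D_1}\LL_{D_2}X}\LL_{D_2}X$, equivalently a natural Mayer--Vietoris triangle $\LL_D X\to\LL_{D_1}X\oplus\LL_{D_2}X\to\LL_{D_1}\LL_{D_2}X\to\Sigma\LL_D X$. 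Now apply the exact endofunctor $\LL_A$, which carries this to a distinguished triangle. Since $D_1\subseteq[\not\subseteq A]$, no prime of $A$ contains a prime of $D_1$, so Corollary~\ref{cor-facturenoetherianbalmerspectrum-sectionstratification} gives $\LL_A\LL_{D_1}=0$; thus $\LL_A\LL_{D_1}X$ and $\LL_A\LL_{D_1}\LL_{D_2}X$ both vanish and the triangle collapses to a natural isomorphism $\LL_A\LL_D X\xrightarrow{\ \sim\ }\LL_A\LL_{D_2}X=\LL_A\LL_{D\cap[\subseteq A]}X$. This proves the Claim.

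It then remains to verify that the isomorphism $\LL_A\LL_B\cong\LL_A\LL_{B\cup C}$ obtained this way is the canonical one of Remark~\ref{rmk-isoconjcanonical-sectionbasics}, i.e.\ that it coincides with $\LL_A$ whiskered onto the natural transformation $\LL_{B\cup C}\Rightarrow\LL_B$ induced by $B\subseteq B\cup C$. This is a diagram chase: the projection of each fracture square onto the vertex $\LL_{D_2}(-)$ is the natural transformation induced by the inclusion $B\cap[\subseteq A]\subseteq D$, these transformations factor through $\LL_{B\cup C}\Rightarrow\LL_B$, and $\LL_A$ turns both of them into isomorphisms. I expect the only genuinely non-formal step to be the very first one: recognizing that one should \emph{not} attempt a direct support comparison of $\LL_{B\cup C}$ and $\LL_B$ — because $\text{supp}(\LL_D X)$ can be strictly larger than $\text{supp}(X)$, so such a comparison fails — but should instead pass through the common reduction $\LL_A\LL_{B\cap[\subseteq A]}$, which is exactly what the vanishing in Corollary~\ref{cor-facturenoetherianbalmerspectrum-sectionstratification} makes available; once that observation is in place the rest is a routine application of the fracture cube lemma.
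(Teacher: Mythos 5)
Your proof is correct, but it takes a genuinely different route from the paper's. The paper applies Lemma~\ref{lem-fracturecube-sectionreduction} directly to the (possibly overlapping) pair $(C,B)$ with $C=[\not\subseteq A]$, verifying the fracture hypothesis via Remark~\ref{rmk-conditionfracturecube-sectionreduction}(ii) — i.e.\ it exploits that $C$ is the complement of a Thomason subset, so $\LL_C$ is smashing and $\LL_B X=0\Rightarrow\LL_B(X\otimes\LL_C\mbu)=0$ — and then collapses the square with $\LL_A\LL_C=0$. You instead prove Corollary~\ref{cor-LambdaALambdaB=LambdaALambdaBcapleqA-rationaltorus} first, reversing the paper's logical order: you fracture an arbitrary $D$ into the \emph{disjoint} pieces $D_1=D\cap[\not\subseteq A]$ and $D_2=D\cap[\subseteq A]$, verify the fracture hypothesis via Remark~\ref{rmk-conditionfracturecube-sectionreduction}(i)/Corollary~\ref{cor-facturenoetherianbalmerspectrum-sectionstratification} (no prime of $D_2$ contains a prime of $D_1$), collapse with $\LL_A\LL_{D_1}=0$, and finally deduce the Proposition by running the resulting Claim through $D=B$ and $D=B\cup C$. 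Both arguments go through; yours is more uniform in that it invokes only Corollary~\ref{cor-facturenoetherianbalmerspectrum-sectionstratification} for both the fracture condition and the vanishing and never needs the smashing/Thomason structure of $C$, at the cost of a slightly longer chain. One cosmetic point worth fixing if you write this up: the closing paragraph on canonicity is more impressionistic than a proof (``the only genuinely non-formal step'' etc.); the naturality in $X$ and compatibility with the comparison $\LL_{B\cup C}\Rightarrow\LL_B$ do hold, but they deserve one concrete sentence identifying the map being inverted as the component at $X$ of $\LL_A$ whiskered on the canonical $\LL_D\Rightarrow\LL_{D_2}$, rather than a meta-remark about which approaches fail.
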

\begin{proof}
We set $C=[\not\subseteq A]$ and $B'=B\cup C$.

We claim that for any object $X \in \CT$ there exists a homotopy pullback square
\begin{center}
\begin{tikzcd}
\LL_{B'}X\arrow[d] \arrow[r] & \LL_{C}X \arrow[d]\\
\LL_{B}X \arrow[r] & \LL_{C}\LL_{B}X. \arrow[ul, phantom, "\ulcorner" very near end]
\end{tikzcd}
\end{center}
We use Lemma~\ref{lem-fracturecube-sectionreduction}. Suppose $\LL_{B}X=0$ and let us show this implies $\LL_{B}\LL_{C}X=0$. By construction $C$ is the complement of a Thomason subset, hence the localization $\LL_{C}=L_C$ is smashing. Therefore, we have to prove  $\LL_{B}\LL_{C}X=\LL_{B}(X\otimes \LL_{C}\mbu)$ is trivial. But by assumption $X$ is $g(B)$-acyclic, thus $X\otimes \LL_{C}\mbu$ is $g(B)$-acyclic as well.
%
%

Since we established the above square is homotopy pullback, we can apply now $\LL_A$ and the diagram still remains homotopy cartesian. But by Corollary~\ref{cor-facturenoetherianbalmerspectrum-sectionstratification} we have $\LL_A\LL_C=0$, therefore the left vertical morphism provides us with the natural equivalence $\LL_A\LL_{B'}\cong \LL_A\LL_B$ as we wanted.
\end{proof}
\begin{corollary}\label{cor-LambdaALambdaB=LambdaALambdaBcapleqA-rationaltorus}
Let $\CT$ be a stratified tensor triangulated category with noetherian Balmer spectrum. Let $A,B$ be two subsets of $\emph{Spc}(\CT^c)$, then we have
\[ \LL_A\LL_B\cong \LL_A\LL_{B\cap [\subseteq A]}. \]
\end{corollary}
\begin{proof}
Observe that $B\setminus [ \not \subseteq A ]= B\cap [\subseteq A]$ and $(B \setminus [ \not \subseteq A ]) \cup [ \not \subseteq A ]=B \cup [ \not \subseteq A ]$. Then, apply Proposition~\ref{prop-LALB=LALBcupnotleqA-sectionreduction} to obtain
\[ \LL_{A}\LL_B\cong \LL_{A}\LL_{B\cup [ \not \subseteq A ]}=\LL_{A}\LL_{(B \setminus [ \not \subseteq A ]) \cup [ \not \subseteq A ]}\cong \LL_A\LL_{B\cap [\subseteq A]}. \]
\end{proof}

\begin{definition}
Let $(P,\leq)$ be a poset. Let $\mathcal{N}(P)$ be the set of all $k$-tuples of subsets of $P$ for all $k\geq 1$. More precisely,
\[ \mathcal{N}(P)=\bigcup_{k\geq 1}\mathcal{P}(P)^k.\]

We say that an arbitrary tuple $\LA=(A_1,\dots, A_k)$ is
\begin{itemize}
\item[(i)] \textit{upward concatenated} if $A_{i+1}\subseteq [\leq A_{i}]$ for all $1\leq i \leq k-1$;
\item[(ii)] \textit{downward concatenated} if $A_i\subseteq [\geq A_{i+1}]$ for all $1\leq i \leq k-1$;
\item[(iii)] \textit{concatenated} if it is both upwards and downward concatenated;
\item[(iv)] \textit{collapsed} if it holds $A_i \not \subseteq A_{i+1}$ and $A_{i}\not \supseteq A_{i+1}$ for any $1\leq i\leq k-1$.
\end{itemize}
\end{definition}

\begin{definition}\label{def-taubeta-sectionreduction}
We define the following two functions on $\CP(P)^k$, for $(P,\leq)$ a poset.
\begin{align*}
\tau \colon \CP(P)^k&\rightarrow \CP(P)^k \\
(A_1,\dots, A_k)&\mapsto (A_1',A_2',\dots, A_k')
\end{align*}
where we set recursively $A_1'=A_1$ and $A_{i+1}'=A_{i+1}\cap[\leq A_{i}']$ for $1\leq i \leq k-1$.
\begin{align*}
\beta \colon \CP(P)^k&\rightarrow \CP(P)^k \\
(B_1,\dots, B_k)&\mapsto (B_1',B_2',\dots, B_k')
\end{align*}
where we set recursively $B_k'=B_k$ and $B_{k-i}'=B_{k-i}\cap[\geq B_{k-i+1}']$ for $1\leq i \leq k-1$.

Putting together all these functions for the various $k\geq 1$ we obtain two functions $\tau$ and $\beta$ on $\CN(P)$.
\end{definition}
\begin{lemma}\label{lem-betataucommute-sectionreduction}
The function $\tau$ defines a retraction of $\CN(P)$ on its subset of upward concatenated tuples. The function $\beta$ defines a retraction of $\CN(P)$ on its subset of downward concatenated tuples.

Moreover, $\beta$ and $\tau$ commute.
\end{lemma}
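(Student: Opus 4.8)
The plan is to verify each of the three assertions in turn, all of them by unwinding Definition~\ref{def-taubeta-sectionreduction} and using only elementary properties of the operators $[\leq S]$ and $[\geq S]$.

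First I would show that $\tau(\LA)$ is always upward concatenated and that $\tau$ fixes upward concatenated tuples. For the first claim, write $\tau(\LA)=(A_1',\dots,A_k')$; by construction $A_{i+1}'=A_{i+1}\cap[\leq A_i']\subseteq[\leq A_i']$, which is exactly the upward concatenation condition. For the second claim, suppose $\LA$ is upward concatenated, so $A_{i+1}\subseteq[\leq A_i]$ for all $i$. I would prove by induction on $i$ that $A_i'=A_i$: the base case is immediate, and if $A_i'=A_i$ then $A_{i+1}'=A_{i+1}\cap[\leq A_i']=A_{i+1}\cap[\leq A_i]=A_{i+1}$ since $A_{i+1}\subseteq[\leq A_i]$ by hypothesis. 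Hence $\tau$ is a retraction onto the upward concatenated tuples. The argument for $\beta$ is entirely symmetric, reading indices from the top downward and replacing $[\leq -]$ by $[\geq -]$ and "upward" by "downward".

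The substantive part is the commutation $\beta\tau=\tau\beta$, and this is where I expect the bookkeeping to be delicate. I would fix a tuple $\LA=(A_1,\dots,A_k)$ and try to give a closed-form description of the $i$-th entry of $\beta\tau(\LA)$ and of $\tau\beta(\LA)$, showing both equal
\[
A_i \cap [\leq A_{i-1}]\cap[\leq A_{i-2}]\cap\dots\cap[\leq A_1]\cap[\geq A_{i+1}]\cap\dots\cap[\geq A_k],
\]
or rather the appropriate iterated version of this where the family/cofamily operators are applied to the already-trimmed sets. The key lemma I would isolate is a monotonicity-and-idempotence statement: that $[\leq -]$ is monotone, that $[\leq [\leq S]]=[\leq S]$, and crucially that the operations of intersecting with a family $[\leq S]$ and intersecting with a cofamily $[\geq S']$ commute in a suitable sense when propagated through the recursion. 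Concretely, I would argue that applying $\tau$ only ever intersects each $A_i$ with sets of the form $[\leq(\text{earlier trimmed entries})]$, applying $\beta$ only ever intersects with sets of the form $[\geq(\text{later trimmed entries})]$, and since the recursion for $\tau$ runs left-to-right while that for $\beta$ runs right-to-left, a double induction (outer on $k$, inner on the entry index) shows the two composites produce the same nested intersection. I would likely phrase the inductive claim so that it already incorporates the trimmed sets, e.g.\ $(\beta\tau\LA)_i=(\tau\beta\LA)_i$ follows once one checks that trimming the tail of entry $i$ via $[\geq-]$ does not disturb the condition that entry $i$ lies in $[\leq(\text{trimmed head})]$, and vice versa—this uses that $[\leq S]$ and $[\geq S']$ are both just "downward/upward saturations", so intersecting with one and then the other is order-independent on the level of the sets involved.

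The main obstacle, then, is purely combinatorial: making the double induction go through cleanly without an explosion of cases, since a priori trimming entry $A_i$ from the left changes $A_i'$, which changes what $[\leq A_i']$ is, which feeds into $A_{i+1}'$, and simultaneously the $\beta$-trimming from the right is altering the same entries. I would manage this by proving the stronger statement that for every $i$, the set $(\tau\LA)_i$ is obtained from $A_i$ by intersecting with $[\leq(\tau\LA)_{i-1}]$ \emph{and} that this equals $A_i\cap[\leq A_{i-1}\cap[\leq A_{i-2}\cap\cdots]]$, i.e.\ giving an explicit un-recursed formula for $\tau$ (and dually for $\beta$), after which commutation becomes a symmetric statement about a single big intersection and the induction collapses. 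If the explicit formula turns out to be too unwieldy, the fallback is a direct element-chase: $p\in(\beta\tau\LA)_i$ iff $p\in A_i$ and there are witnesses $p\leq p_{i-1}\leq\cdots\leq p_1$ with $p_j\in(\tau\LA)_j$ and $p\geq q_{i+1}\geq\cdots\geq q_k$ with $q_j\in(\beta\tau\LA)_j$, and one unwinds the analogous chain of witnesses for $\tau\beta$, matching them up by induction.
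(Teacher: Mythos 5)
Your first and second paragraphs are fine and match the paper: both claims about $\tau$ and $\beta$ being retractions are dismissed as essentially trivial.

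For the commutation, your "fallback" (the element-chase that unwinds the recursion into the existence of a chain of witnesses) is exactly the paper's proof. The paper gives the closed form directly: writing $\CC=\tau(\LA)$, one has $C_i=\{p_i\in A_i:\exists\,p_j\in A_j\ (1\leq j\leq i-1)\text{ with }p_1\geq\cdots\geq p_{i-1}\geq p_i\}$, and dually for $\beta$; then both $\tau\beta(\LA)_i$ and $\beta\tau(\LA)_i$ visibly equal $\{p_i\in A_i:\exists\,p_j\in A_j\ (j\neq i)\text{ with }p_1\geq\cdots\geq p_i\geq\cdots\geq p_k\}$, once one observes that the two half-chains glue along $p_i$. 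There is no double induction: the single explicit formula does all the work.

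Be warned, though, that the "flat" closed form you propose first,
\[
A_i\cap[\leq A_{i-1}]\cap\cdots\cap[\leq A_1]\cap[\geq A_{i+1}]\cap\cdots\cap[\geq A_k],
\]
is genuinely wrong, not merely informal: take $P=\{a,b,c\}$ with $a<b$, $a<c$, and $b,c$ incomparable, and $\LA=(\{b\},\{c\},\{a\})$. Then $\tau(\LA)_2=\{c\}\cap[\leq\{b\}]=\emptyset$, hence $\tau(\LA)_3=\emptyset$, whereas the flat formula would give $\{a\}\cap[\leq\{c\}]\cap[\leq\{b\}]=\{a\}$. The trimming is essentially recursive precisely because the witnesses $p_1\geq\cdots\geq p_{i-1}\geq p_i$ must form a single chain rather than separately dominating $p_i$ — the nested form $A_i\cap[\leq A_{i-1}\cap[\leq A_{i-2}\cap\cdots]]$ (equivalently, the chain-existence formula) is what you need, and once you have that the commutation is immediate by symmetry of the two-sided chain condition, with no separate "commutation of $[\leq-]$ and $[\geq-]$" lemma required.
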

\begin{proof}
The first two claims are trivial. It remains to show that $\beta$ and $\tau$ commute.

Observe that if $\mathbb{C}=\tau(\mathbb{A})$, then we have 
\[ C_i=\{ p_i \in A_i : \exists p_j \in A_j \ 1\leq j \leq i-1 \quad  p_1\geq p_2\geq \dots \geq p_{i-1}\geq p_i \}. \]
Similarly, $\mathbb{B}=\beta(\mathbb{A})$ can be alternatively defined by the sets
\[ B_i=\{ p_i \in A_i : \exists p_j \in A_j \ i\leq j \leq k \quad p_i\geq p_{i+1}\geq \dots \geq p_k  \}. \]
Using these descriptions, it is immediate that both $\tau \beta(\mathbb{A})$ and $\beta \tau (\mathbb{A})$ coincide with the tuple $\mathbb{D}$ where 
\[ D_i=\{ p_i \in A_i : \exists p_j \in A_j \ 1 \leq j \leq k, j \neq i \quad p_1 \geq p_2 \geq  \dots \geq p_i\geq \dots \geq p_k  \}. \]
\end{proof}
\begin{definition}
We define the function $\delta\colon \CN(P)\rightarrow \CN(P)$ by $\delta=\beta\tau=\tau\beta$. By Lemma~\ref{lem-betataucommute-sectionreduction} this is clearly a retraction of $\CN(P)$ on its subset of concatenated tuples.

Furthermore, it admits an explicit description as $\delta(A_1,\dots, A_k)=(A_1',\dots, A_k')$ with
\[A_i'=\{a_i\in A_i : \exists a_j \in A_j \ 1 \leq j \leq k, j \neq i \quad a_1 \geq a_2 \geq  \dots \geq a_i\geq \dots \geq a_k \}\]
for any index $ 1\leq i \leq k$.
\end{definition}

\begin{lemma}\label{lem-defgamma-sectionreduction}
There exists a unique function $\gamma\colon \CN(P)\rightarrow \CN(P)$ which is a retraction on the subset of collapses tuples and such that for any $k$-tuple $(A_1,\dots, A_k)$ we have
\[ \gamma(A_1,\dots, A_k)=\gamma(A_1,\dots, A_{i-2},A_i, A_{i+1},\dots, A_k) \quad \text{if} \ A_i\subseteq A_{i-1} \]
and
\[ \gamma(A_1,\dots, A_k)=\gamma(A_1,\dots, A_{i-1},A_i, A_{i+2},\dots, A_k) \quad \text{if} \ A_i\subseteq A_{i+1}. \]
\end{lemma}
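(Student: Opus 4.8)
The plan is to read the two displayed identities as the invariance conditions of a terminating, confluent rewriting relation on $\CN(P)$, and to produce $\gamma$ as the associated normal‑form map. Declare that a tuple $(A_1,\dots,A_k)$ \emph{reduces in one step} to $(A_1,\dots,A_{i-2},A_i,A_{i+1},\dots,A_k)$ whenever $A_i\subseteq A_{i-1}$, and to $(A_1,\dots,A_{i-1},A_i,A_{i+2},\dots,A_k)$ whenever $A_i\subseteq A_{i+1}$; in both cases one deletes the larger member of a comparable adjacent pair (when the two members are equal the two prescriptions yield the same tuple, so each comparable adjacent pair determines a single one‑step reduct). By construction, a tuple admits no one‑step reduction precisely when it is collapsed.

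Termination is immediate: every reduction step strictly decreases the number $k$ of components, and $k\geq 1$, so there is no infinite reduction sequence and each tuple reduces in finitely many steps to a collapsed one. To invoke Newman's Lemma it then suffices to verify local confluence: if $\LA$ reduces in one step to $\LB_1$ and to $\LB_2$ via two distinct choices of comparable adjacent pair, then $\LB_1$ and $\LB_2$ admit a common reduct. If the two pairs coincide there is nothing to prove (one reduct each). If they share no component the two deletions occur at non‑interacting positions, commute, and already reconverge. The only substantive case is two overlapping pairs, i.e.\ consecutive components $A_j,A_{j+1},A_{j+2}$ with $A_j,A_{j+1}$ comparable and $A_{j+1},A_{j+2}$ comparable. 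Here one runs through the four cases given by the two directions of comparability: in each, after performing one of the deletions the surviving pair among $A_j,A_{j+1},A_{j+2}$ is still comparable — this is exactly where transitivity of $\subseteq$ is used, e.g.\ $A_j\subseteq A_{j+1}\subseteq A_{j+2}$ forces $A_j\subseteq A_{j+2}$ — so one further single reduction of each branch lands on the same tuple $\LC$. This is the only place any real work is needed, and it is a short finite check.

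By Newman's Lemma the reduction relation is confluent, hence every $\LA\in\CN(P)$ has a unique normal form, which is collapsed; define $\gamma(\LA)$ to be this normal form. Then $\gamma$ fixes every collapsed tuple and has the collapsed tuples as image, so it is a retraction onto that subset. Moreover, if $\LA$ reduces in one step to $\LB$ then $\LA$ and $\LB$ have the same normal form, so $\gamma(\LA)=\gamma(\LB)$; specializing to the two kinds of one‑step reduction gives exactly the two displayed equations, so $\gamma$ satisfies the hypotheses.

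For uniqueness, let $\gamma'$ be any function meeting the hypotheses. The two equations say $\gamma'(\LA)=\gamma'(\LB)$ whenever $\LA$ reduces to $\LB$ in one step; by induction on the length of a reduction of $\LA$ to its normal form we get $\gamma'(\LA)=\gamma'(\gamma(\LA))$, and since $\gamma(\LA)$ is collapsed and $\gamma'$ is a retraction onto the collapsed tuples, $\gamma'(\gamma(\LA))=\gamma(\LA)$. Hence $\gamma'=\gamma$. (One could alternatively take the explicit description of $\gamma$ asserted in the next definition as a guess and verify the properties by hand, but the rewriting argument is cleaner and, as with $\delta$, makes the canonicity of the construction transparent.)
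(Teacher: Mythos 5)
Your proof is correct and takes essentially the same underlying route as the paper's — produce $\gamma$ as the normal form of the terminating rewriting system that deletes the larger member of a comparable adjacent pair — but you are noticeably more careful about well-definedness. The paper's proof verifies only a single overlap configuration, namely $A_{i}\subseteq A_{i-1}$ and $A_{i}\subseteq A_{i+1}$, where the competing deletions occur at the non-adjacent positions $i-1$ and $i+1$ and literally commute, and then asserts that ``the order is irrelevant''. Your local-confluence check via Newman's Lemma instead runs through all overlap configurations, correctly identifying the ones where the two candidate deletions sit at \emph{adjacent} positions (e.g.\ $A_{i-1}\supseteq A_i\supseteq A_{i+1}$, where one branch deletes $A_{i-1}$ and the other deletes $A_i$) and where the reconvergence genuinely requires transitivity of $\subseteq$ to justify the extra step; these are not covered by a bare ``the operations commute'' observation. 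So your argument closes a small gap in the published proof, and it also isolates the exact algebraic input (termination by strictly decreasing length, local confluence, Newman) that makes the construction canonical, which fits well with the uniqueness clause.

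Two cosmetic points. First, your phrase ``in each, after performing one of the deletions the surviving pair among $A_j,A_{j+1},A_{j+2}$ is still comparable'' is slightly too strong: in the configuration $A_j\subseteq A_{j+1}\supseteq A_{j+2}$ both candidate deletions already remove $A_{j+1}$ and the branches coincide at once, with no claim needed about $A_j,A_{j+2}$ being comparable. Second, the uniqueness clause implicitly uses that $\gamma'$ being ``a retraction onto the collapsed tuples'' entails $\gamma'(\LC)=\LC$ for $\LC$ collapsed; you do use it, but it deserves a single explicit word, since otherwise the final identity $\gamma'(\gamma(\LA))=\gamma(\LA)$ is unjustified. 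Neither affects correctness.
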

\begin{proof}
It is easy to see that in finitely many steps we can reduce a $k$-tuple to a collapsed $l$-tuple with $l\leq k$  using the operations described above of removing a subset if it is adjacent to another subset which is contained in the former.

If $A_i$ is contained in both $A_{i+1}$ and $A_{i-1}$, then it is trivial to observe that the two operations of removing $A_{i+1}$ and $A_{i-1}$ commute. Hence, the order is irrelevant and the resulting $\gamma$ is well-defined.
\end{proof}

\begin{lemma}
The function $\gamma$ preserves upward and downward concatenated tuples. Therefore, we have the equality $\gamma\delta=\delta\gamma\delta$ and this  defines a retraction on $\CN(P)$ of its subset of collapsed concatenated tuples.
\end{lemma}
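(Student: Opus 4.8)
The plan is to prove the three assertions in order: first that $\gamma$ preserves upward concatenated tuples, then (by a symmetric argument) that it preserves downward concatenated tuples, and finally to deduce the identity $\gamma\delta=\delta\gamma\delta$ and the retraction statement from these two facts together with the retraction properties of $\gamma$ and $\delta$ already established.

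For the first claim, recall that $\gamma$ is built by iterating two elementary moves: if $A_i\subseteq A_{i-1}$ delete $A_{i-1}$, and if $A_i\subseteq A_{i+1}$ delete $A_{i+1}$. It suffices to check that each elementary move sends an upward concatenated tuple to an upward concatenated tuple, since $\gamma$ is a finite composite of such moves and the property is preserved under composition. So suppose $(A_1,\dots,A_k)$ is upward concatenated, i.e.\ $A_{j+1}\subseteq[\leq A_j]$ for all $j$. Consider the move deleting $A_{i-1}$ (the case $A_i\subseteq A_{i-1}$). The only adjacency in the new tuple not already present in the old one is that $A_{i-2}$ is now followed by $A_i$; I must verify $A_i\subseteq[\leq A_{i-2}]$. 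But $A_i\subseteq A_{i-1}\subseteq[\leq A_{i-2}]$ by hypothesis and upward concatenation, so this holds; all other adjacencies are unchanged. For the move deleting $A_{i+1}$ (the case $A_i\subseteq A_{i+1}$), the new adjacency is $A_i$ followed by $A_{i+2}$, and I need $A_{i+2}\subseteq[\leq A_i]$. Here $A_{i+2}\subseteq[\leq A_{i+1}]$ by upward concatenation; since $A_i\subseteq A_{i+1}$ gives $[\leq A_i]\subseteq[\leq A_{i+1}]$, this inclusion is in the wrong direction, so I need a slightly closer look. In fact the relevant point is that $[\leq A_{i+1}]\subseteq[\leq A_i]$ fails in general, so instead I should argue as follows: since $A_i\subseteq A_{i+1}$, for the deletion to be performed $\gamma$'s recipe is valid, and after deletion we need $A_{i+2}\subseteq[\leq A_i]$; but actually we should observe that upward concatenation of the \emph{original} tuple gives $A_{i+1}\subseteq[\le A_i]$, hence $[\le A_{i+1}]\subseteq[\le[\le A_i]]=[\le A_i]$, and therefore $A_{i+2}\subseteq[\le A_{i+1}]\subseteq[\le A_i]$. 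The downward case is entirely symmetric, using $[\ge-]$ in place of $[\le-]$ and reversing the roles of the two moves.

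For the equality $\gamma\delta=\delta\gamma\delta$: by definition $\delta$ is a retraction onto concatenated tuples, so $\delta(\LA)$ is concatenated, in particular upward and downward concatenated. By the first part, $\gamma\delta(\LA)$ is then again upward and downward concatenated, i.e.\ concatenated; and since $\delta$ is a retraction onto concatenated tuples it fixes every concatenated tuple, so $\delta\gamma\delta(\LA)=\gamma\delta(\LA)$. This gives $\gamma\delta=\delta\gamma\delta$. Finally, $\gamma\delta$ lands in collapsed tuples because $\gamma$ is a retraction onto collapsed tuples; combined with the previous sentence, $\gamma\delta(\LA)$ is both collapsed and concatenated, and on such a tuple both $\gamma$ and $\delta$ act as the identity, so $(\gamma\delta)(\gamma\delta)=\gamma(\delta\gamma\delta)=\gamma\gamma\delta=\gamma\delta$; hence $\gamma\delta$ is idempotent, and it fixes collapsed concatenated tuples, so it is a retraction onto that subset.

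The only delicate point is the verification in the second move of the first part, where one must be careful that $[\le-]$ is monotone but that the inclusion needed goes through the identity $[\le[\le S]]=[\le S]$ rather than through $A_i\subseteq A_{i+1}$ directly; this is the step I would write out most carefully. Everything else is formal bookkeeping about which adjacencies change under an elementary deletion.
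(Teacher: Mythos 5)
Your proof is correct and takes essentially the same approach as the paper: you check that each elementary deletion move in the construction of $\gamma$ preserves the upward (resp.\ downward) concatenation condition, and then deduce the identity $\gamma\delta=\delta\gamma\delta$ and the retraction property formally. The only cosmetic difference is that you work directly with the defining condition $A_{i+1}\subseteq[\le A_i]$ and the idempotency $[\le[\le S]]=[\le S]$, whereas the paper recasts upward concatenation as ``every element is the terminus of a descending chain through the earlier subsets'' and manipulates those chains; both are correct and your writeup of the second half is actually a bit more explicit than the paper's.
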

\begin{proof}
We show that $\gamma$ preserves upward concatenated tuples, the proof that it preserves downward concatenated tuples is similar.

Using the description of $\tau$ in Lemma~\ref{lem-betataucommute-sectionreduction}, it is easy to see that $\LA=(A_1,\dots, A_k)$ being upward concatenated means that $\forall 1\leq i \leq k$ and $\forall a\in A_i$ there exists a sequence $a_1\geq a_2\geq \dots \geq a_{i-1}\geq a$ where $a_j\in A_j$ for any $j$ between $1$ and $i-1$.

Suppose for the tuple $\LA$ we have an inclusion $A_l\subseteq A_{l+1}$ for some $l$. If we remove $A_{l+1}$ from $\LA$ we can see the new tuple maintains the above property: if $i\leq l$ there is nothing to prove since we can use the same descending chain as before for any $a\in A_i$. If $i>l+1$ then take $a\in A_i$. Because $\LA$ is upward concatenated we have a sequence $a_1\geq \dots
\geq a_l \geq a_{l+1}\geq \dots \geq a$, by removing the element $a_{l+1}$ we obtain a new thread of the tuple $(A_1,\dots, A_l, A_{l+2},\dots, A_i)$ terminating in $a$.

Similarly, if we have an inclusion $A_l\supseteq A_{l+1}$ we can see that eliminating $A_l$ from the tuple preserves the property of having each element being the terminal point of a descending chain of elements of the previous subsets as above.
%
%
\end{proof}

\begin{remark}
It is easy to understand that $\delta$ does not preserve collapsed tuples: consider three distinct elements $p,q,r$ of a poset such that any two of them are not related by the partial order, then the pair $(\{p, r\}, \{q, r\})$ is collapsed, but $\delta(\{p, r\}, \{q, r\})=(\{r\},\{r\})$.
\end{remark}

\begin{proposition}\label{prop-LA=LdeltaA-sectionreduction}
Let $\LA$ be a tuple of subsets of $\emph{Spc}(\CT^c)$. Then, there is a natural isomorphism $\LL_{\LA}\cong \LL_{\gamma\delta(\LA)}$.
\end{proposition}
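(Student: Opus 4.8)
\textbf{Proof plan for Proposition~\ref{prop-LA=LdeltaA-sectionreduction}.}
The plan is to show the natural isomorphism in two stages matching the factorization $\gamma\delta = \gamma\beta\tau$ (or, since $\beta$ and $\tau$ commute, $\gamma\tau\beta$): first that $\LL_{\LA}\cong \LL_{\delta(\LA)}$, then that $\LL_{\delta(\LA)}\cong \LL_{\gamma\delta(\LA)}$. For the first stage I would proceed by a finite induction that peels off one $\cap[\leq A_i']$ or $\cap[\geq A_{i+1}']$ at a time, reducing to the two building-block identities already proved. Concretely, applying $\tau$ amounts to replacing each $A_{i+1}$ by $A_{i+1}\cap[\leq A_i']$ one index at a time; for a single replacement, write $\LL_{\LA}$ with the relevant two adjacent spots exposed as $\dots\LL_{A_i'}\LL_{A_{i+1}}\dots$ and apply Corollary~\ref{cor-LambdaALambdaB=LambdaALambdaBcapleqA-rationaltorus} with $A = A_i'$, $B = A_{i+1}$, which gives $\LL_{A_i'}\LL_{A_{i+1}}\cong \LL_{A_i'}\LL_{A_{i+1}\cap[\subseteq A_i']}$; but here $[\subseteq A_i']$ in $\emph{Spc}(\CT^c)$ ordered by inclusion is exactly $[\leq A_i']$ in the poset language, so this is the desired step. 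Since all the terms to the left of position $i$ are untouched and localization functors compose, this local isomorphism extends to a natural isomorphism of the full composites. Iterating over $i$ from $1$ to $k-1$ yields $\LL_{\LA}\cong \LL_{\tau(\LA)}$. The same argument run from the right end using Proposition~\ref{prop-LALB=LAcapgeqBLB-sectionreduction} (which already has the form $\LL_A\LL_B\cong\LL_{A\cap[\supseteq B]}\LL_B$, i.e.\ $\LL_{A\cap[\geq B]}\LL_B$ in poset notation) gives $\LL_{\tau(\LA)}\cong\LL_{\beta\tau(\LA)} = \LL_{\delta(\LA)}$; one must check that applying $\beta$ after $\tau$ does not disturb upward concatenation, but that is exactly the content of Lemma~\ref{lem-betataucommute-sectionreduction} together with the previous lemma that $\gamma$ preserves concatenation, and anyway we only need the isomorphism of functors, which follows regardless of order.

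For the second stage, $\LL_{\delta(\LA)}\cong\LL_{\gamma\delta(\LA)}$, the point is that $\gamma$ only ever deletes a subset $A_i$ that is sandwiched $A_i\subseteq A_{i-1}$ (delete $A_{i-1}$) or $A_i\subseteq A_{i+1}$ (delete $A_{i+1}$) — wait, reading Lemma~\ref{lem-defgamma-sectionreduction} more carefully, the operations are: if $A_i\subseteq A_{i-1}$ one may drop $A_{i-1}$, and if $A_i\subseteq A_{i+1}$ one may drop $A_{i+1}$. So in both cases a larger adjacent subset is removed when a smaller one sits next to it. The corresponding functor identity is: if $B\subseteq A$ then $\LL_A\LL_B\cong\LL_B$ and $\LL_B\LL_A\cong\LL_B$. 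I would prove this directly from the Bousfield-localization characterization: $\LL_B$-local objects are precisely the $g(B)$-local objects, and since $g(B)$ is a summand (up to Bousfield class) of $g(A)=\coprod_{\fp\in A}g(\fp)\supseteq\coprod_{\fp\in B}g(\fp)$, every $g(A)$-acyclic object is $g(B)$-acyclic, so $\LL_B$ kills nothing that $\LL_A$ has already made into a local object; more precisely, for $B\subseteq A$ the acyclics of $\LL_A$ contain the acyclics of $\LL_B$, hence $\LL_A\LL_B X$ is $g(B)$-local (being $\LL_A$ of something, it is $g(A)$-local hence $g(B)$-local) with acyclic cofiber over $X\to\LL_AX\to\LL_A\LL_BX$... actually the cleanest route: $\LL_B$ is idempotent and for $B\subseteq A$ one has $\LL_B\LL_A\cong\LL_B$ because $\LL_A$-localization is a further localization at a subclass, and dually $\LL_A\LL_B\cong\LL_B$ because $\LL_B X$ is already $g(A)$-... no. Let me just say: I would verify $\LL_A\LL_B\cong\LL_B\cong\LL_B\LL_A$ for $B\subseteq A$ by checking on acyclics and locals using stratification (objects with empty support vanish), exactly as in the proof of Proposition~\ref{prop-LambdaALBc=convexloc}. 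Granting this, each elementary move of $\gamma$ induces a natural isomorphism of composites, and finitely many moves compose to $\LL_{\delta(\LA)}\cong\LL_{\gamma\delta(\LA)}$.

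Chaining the two stages gives the claimed $\LL_{\LA}\cong\LL_{\gamma\delta(\LA)}$, and canonicity in the sense of Remark~\ref{rmk-isoconjcanonical-sectionbasics} holds because every isomorphism used is one of the universal-property isomorphisms from Propositions~\ref{prop-LALB=LAcapgeqBLB-sectionreduction} and~\ref{prop-LALB=LALBcupnotleqA-sectionreduction} (themselves built from homotopy-pullback squares via Lemma~\ref{lem-fracturecube-sectionreduction}) or from the idempotence of Bousfield localizations. The main obstacle I anticipate is bookkeeping rather than mathematics: one must be careful that when $\tau$ or $\gamma$ acts at position $i$, the subsets at earlier positions have \emph{already} been modified, so the building-block lemma must be applied to the \emph{current} tuple, and one must confirm that the order in which the elementary moves are performed does not matter for the resulting functor — which is guaranteed by Lemma~\ref{lem-betataucommute-sectionreduction} (commutation of $\beta$ and $\tau$) and the well-definedness of $\gamma$ established in Lemma~\ref{lem-defgamma-sectionreduction}, so that the composite isomorphism is independent of choices.
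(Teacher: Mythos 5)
Your plan matches the paper's proof: $\tau$ is implemented by iterated applications of Corollary~\ref{cor-LambdaALambdaB=LambdaALambdaBcapleqA-rationaltorus}, $\beta$ by iterated applications of Proposition~\ref{prop-LALB=LAcapgeqBLB-sectionreduction}, and each $\gamma$-move by the isomorphism $\LL_A\LL_B\cong\LL_B\cong\LL_B\LL_A$ for $B\subseteq A$. One small correction to your scratch work on the $\gamma$-step: for $B\subseteq A$ the containment of acyclics runs the other way — every $g(A)$-acyclic is $g(B)$-acyclic (so $g(A)$-acyclics $\subseteq g(B)$-acyclics), hence $g(B)$-locals $\subseteq g(A)$-locals, and that is what makes $\LL_A\LL_BX\cong\LL_BX$ (the object $\LL_BX$ is already $g(A)$-local) and $\LL_B\LL_AX\cong\LL_BX$ (the composite $X\to\LL_AX\to\LL_B\LL_AX$ has $g(B)$-acyclic cofiber and $g(B)$-local target) both immediate from general Bousfield localization theory, without needing stratification.
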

\begin{proof}
It is a matter of understanding that the definitions of the operators $\tau$ and $\beta$ correspond to the reductions of respectively Corollary~\ref{cor-LambdaALambdaB=LambdaALambdaBcapleqA-rationaltorus} and Proposition~\ref{prop-LALB=LAcapgeqBLB-sectionreduction}. Thus, applying $\delta$ does not change the isomorphism type of the iterated localization.

Meanwhile, the reduction $\gamma$ corresponds to the isomorphism $\LL_{A}\LL_{B}\cong \LL_{A}\cong \LL_{B}\LL_{A}$ whenever we have an inclusion $A\subseteq B$.
\end{proof}

This result allows us to prove Conjecture~\ref{conj} in a specific case.
\begin{corollary}
Let $\LA$ be a tuple of subsets admitting no thread, then the associated composition of localizations $\LL_{\LA}$ is trivial. 
\end{corollary}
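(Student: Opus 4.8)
The plan is to read the statement off the reduction machinery just developed, specifically Proposition~\ref{prop-LA=LdeltaA-sectionreduction}, combined with the explicit formula for $\delta$.

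First I would translate the hypothesis. To say that $\LA = (A_1,\dots,A_k)$ admits no thread means there is no chain $a_1 \geq a_2 \geq \cdots \geq a_k$ of elements with $a_i \in A_i$ for all $i$. On the other hand, the explicit description of $\delta$ records
\[ \delta(\LA)_i = \{\, a_i \in A_i : \exists\, a_j \in A_j \text{ for } 1 \le j \le k,\ j \ne i,\ \text{with } a_1 \ge \cdots \ge a_i \ge \cdots \ge a_k \,\}, \]
and membership of $a_i$ in this set is exactly the assertion that \emph{some} thread of $\LA$ passes through $a_i$ in the $i$-th slot. Hence, under the no-thread hypothesis, $\delta(\LA)_i = \emptyset$ for every $i$, i.e.\ $\delta(\LA) = (\emptyset,\dots,\emptyset)$, and applying $\gamma$ collapses the repeated inclusions $\emptyset \subseteq \emptyset$ to the one-term tuple $(\emptyset)$.

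Next I would identify $\LL_\emptyset$ with the zero functor. By definition $\LL_\emptyset$ is the Bousfield localization attached to $g(\emptyset) = \coprod_{\fp \in \emptyset} g(\fp) = 0$, equivalently the localization whose kernel is $\CT_{\emptyset^c} = \{\, X \in \CT : \text{Supp}(X) \subseteq \text{Spc}(\CT^c) \,\} = \CT$; in either guise $\LL_\emptyset X$ is at once $\emptyset$-acyclic and $\emptyset$-local for every $X$, so $\LL_\emptyset X = 0$. Feeding this into Proposition~\ref{prop-LA=LdeltaA-sectionreduction} gives $\LL_{\LA} \cong \LL_{\gamma\delta(\LA)} = \LL_\emptyset = 0$, as claimed. (If one prefers to bypass $\gamma$, note simply that $\LL_{\delta(\LA)}$ is a $k$-fold composite of copies of $\LL_\emptyset = 0$, hence itself $0$.)

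I do not anticipate any real obstacle: every ingredient is already in place, and the only mildly delicate points — matching the combinatorial notion of ``thread'' against the membership condition defining the components of $\delta(\LA)$, and checking that $\LL_\emptyset$ is the zero functor rather than the identity — are each dispatched in a single line.
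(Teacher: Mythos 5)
Your argument is correct and is essentially the paper's own proof: you observe that the no-thread hypothesis forces every component of $\delta(\LA)$ to be empty (which is exactly what the paper records as $\delta(\LA)=\emptyset$), then invoke Proposition~\ref{prop-LA=LdeltaA-sectionreduction} and the fact that $\LL_\emptyset$, being the Bousfield localization at $g(\emptyset)=0$, is the zero functor. You spell out a couple of small points the paper leaves implicit (that $\gamma$ collapses $(\emptyset,\dots,\emptyset)$ to $(\emptyset)$, and that $\LL_\emptyset=0$ rather than the identity), but the route is the same.
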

\begin{proof}
Since $T(\LA)=\emptyset$, it follows that $\delta(\LA)=\emptyset$. Now apply Proposition~\ref{prop-LA=LdeltaA-sectionreduction}.
\end{proof}

\begin{remark}
Proposition~\ref{prop-LA=LdeltaA-sectionreduction} is not strong enough to prove Conjecture~\ref{conj}. There are many cases in which two different collapsed concatenated tuples have the same thread sets.

For example, suppose we have distinct Balmer primes $\fp_1, \fp_2, \fq_1, \fq_2$ such that the only inclusions among them are $\fp_1\supseteq \fp_2$ and $\fq_1\supseteq \fq_2$. In this situation the triple $(\{\fp_1, \fq_1\}, \{\fp_1, \fq_2\}, \{\fp_2, \fq_2\})$ and the pair $(\{\fp_1, \fq_1\}, \{\fp_2, \fq_2\})$ are both collapsed concatenated and their thread sets consists of all the chains containing  $\fp_1, \fp_2$ or $\fq_1, \fq_2$.
\end{remark}

\section{The finite case}\label{section-finitecase}
We now prove the isomorphism in the claim of Conjecture~\ref{conj} in the case where we are considering only finitely many elements of $\text{Spc}(\CT^c)$. This is essentially \cite[Thm.~1.8]{itloc} and \cite[Prop.~1.10]{itloc}. However, in that source the set indexing the localizations is linearly ordered since its main application was the chromatic case $L_n\CS p_{(p)}$ of Example~\ref{ex-LnSpstratified-sectionbalmer}.

We now propose the modifications to make the same argument work for a Balmer spectrum which is not necessarily linearly ordered.

We first notice that Corollary~\ref{cor-facturenoetherianbalmerspectrum-sectionstratification} provides an adaptation of the fracture axiom \cite[Def.~2.1]{itloc}, allowing us to construct homotopy limit cubes as illustrated in Lemma~\ref{lem-fracturecube-sectionreduction}.

\begin{definition}\label{def-isotropicloccomp}
Let $C \in s(\text{Spc}(\CT^c))$ be an arbitrary chain, hence we can write $C=\{ \fp_1\supset \fp_2\supset \dots \supset \fp_k \}$ for some $k \in \mathbb{N}$. We set
\[ \varphi_C=\LL_{\fp_1}\LL_{\fp_2}\dots \LL_{\fp_k}. \]
In the case $C=\emptyset$ we simply take the convention $\varphi_{\emptyset}=Id$.

Observe that by Corollary~\ref{cor-facturenoetherianbalmerspectrum-sectionstratification} this is the only composition of localizations with respect to a single Balmer prime which is possibly non-trivial.
\end{definition}

We start by proving the following analogue of the fracture cube. 
\begin{proposition}\label{prop-LambdaAholimchains-sectionfinitecase}
Let $A \subseteq \emph{Spc}(\CT^c)$ be a finite subset, then for any object $X$ of the tensor triangulated category $\CT$ we have an isomorphism 
\[ \LL_AX \cong \holim_{C \in s(A)'}\varphi_C X. \]
In particular, this equivalence holds for any subset $A$ if the Balmer spectrum is finite.
\end{proposition}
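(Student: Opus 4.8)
The plan is to prove this by induction on $|A|$, using the fracture cube machinery of Lemma~\ref{lem-fracturecube-sectionreduction} applied to a judicious stratification of $A$ by length. First I would fix a finite subset $A$ and, for each point $\fp\in A$, record its length $\ell(\fp)$ within the poset $(A,\subseteq)$ (equivalently, within $\text{Spc}(\CT^c)$ restricted to $A$; since $A$ is finite this is a finite nonnegative integer). Write $A=A_0\sqcup A_1\sqcup\cdots\sqcup A_m$ where $A_i=\{\fp\in A:\ell(\fp)=i\}$, ordered so that $A_0$ consists of the maximal primes (those minimal for inclusion among compact objects — careful with the reversal, but concretely: the $\fp$ admitting no longer descending chain below them in $A$). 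The key point is that if $\fq\in A_j$ with $j<i$ and $\fp\in A_i$, then $\fp\not\supseteq\fq$: a strict inclusion $\fp\supset\fq$ would extend a length-$j$ chain terminating at $\fq$ to a longer chain terminating at $\fp$, contradicting $\ell(\fp)=i$ (and $\fp=\fq$ is excluded since they lie in different length strata). Hence by Corollary~\ref{cor-facturenoetherianbalmerspectrum-sectionstratification} and Remark~\ref{rmk-conditionfracturecube-sectionreduction}(i), the hypothesis of Lemma~\ref{lem-fracturecube-sectionreduction} is satisfied for the tuple $(A_0,A_1,\dots,A_m)$, so $\LL_A X$ is the homotopy limit over the punctured cube $\CP(\{0,1,\dots,m\})'$ of the compositions $\LL_{A_{i_1}}\cdots\LL_{A_{i_k}}X$.

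Next I would analyze each vertex of this cube. Fix a chain of strata $S=\{i_1<i_2<\cdots<i_k\}\subseteq\{0,\dots,m\}$ and consider $\LL_{A_{i_1}}\LL_{A_{i_2}}\cdots\LL_{A_{i_k}}X$. Each $\LL_{A_{i_j}}=\LL_{A_{i_j}}$ is itself a localization with respect to $g(A_{i_j})=\coprod_{\fp\in A_{i_j}}g(\fp)$; I want to decompose it further. The plan is to use the fracture cube again, now within a single length stratum $A_{i_j}$: all primes in $A_{i_j}$ have the same length, hence no strict inclusions hold among them, so $\LL_{\fp}\LL_{\fq}=0$ for distinct $\fp,\fq\in A_{i_j}$ by Proposition~\ref{prop-fractureaxiombalmerspectrum}/Corollary~\ref{cor-facturenoetherianbalmerspectrum-sectionstratification}; consequently $\LL_{A_{i_j}}X\cong\prod_{\fp\in A_{i_j}}\LL_{\fp}X$ (the punctured cube over $\CP(A_{i_j})'$ collapses since every vertex indexed by a set of size $\geq 2$ is zero, leaving a product over singletons). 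Substituting these decompositions, and using that localization functors and homotopy limits can be interchanged appropriately, each vertex $\LL_{A_{i_1}}\cdots\LL_{A_{i_k}}X$ becomes a homotopy limit over all choices of $(\fp_{i_1},\dots,\fp_{i_k})$ with $\fp_{i_j}\in A_{i_j}$ of the compositions $\LL_{\fp_{i_1}}\cdots\LL_{\fp_{i_k}}X$. By Corollary~\ref{cor-facturenoetherianbalmerspectrum-sectionstratification}, such a composition $\LL_{\fp_{i_1}}\cdots\LL_{\fp_{i_k}}X$ is zero unless $\fp_{i_1}\supseteq\fp_{i_2}\supseteq\cdots\supseteq\fp_{i_k}$, i.e.\ unless $\{\fp_{i_1},\dots,\fp_{i_k}\}$ is a chain in $A$ — and when it is a chain, this is precisely $\varphi_C X$ for $C=\{\fp_{i_1}\supset\cdots\supset\fp_{i_k}\}$.

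Assembling the two layers of homotopy limits, $\LL_A X$ is computed as a homotopy limit over a diagram whose nonzero entries are indexed by nonempty chains $C$ of $A$ and whose value at $C$ is $\varphi_C X$. The remaining task is purely combinatorial: identify the resulting indexing category with (a category equivalent to) the poset $s(A)'$ of nonempty chains of $A$ ordered by inclusion, and check that the structure maps match the canonical natural transformations $\varphi_C\Rightarrow\varphi_{C'}$ for $C\subseteq C'$ (which exist since each extra localization comes with a unit map). I would argue that the "collapsing" of zero entries identifies the total diagram category with $s(A)'$: a nonempty chain $C$ meets each length stratum in at most one point (two elements of $C$ in the same stratum would be incomparable, impossible in a chain), so $C$ determines and is determined by the pair (set of strata it meets, the chosen point in each). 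One then checks that the homotopy limit of the original (two-layered) cube diagram agrees with $\holim_{C\in s(A)'}\varphi_C X$ by a cofinality/Fubini argument for homotopy limits, discarding the contractible-on-the-nose zero summands. The main obstacle I anticipate is precisely this bookkeeping step: carefully justifying the interchange of the nested homotopy limits and the elimination of the vanishing vertices so that the two indexing categories genuinely match, rather than merely having equivalent homotopy limits up to a non-canonical identification; this is exactly where \cite[Thm.~1.8]{itloc} did the analogous work in the linearly ordered case, and the adaptation amounts to replacing "subsets of a linear order" by "chains in a finite poset" throughout, which is compatible because chains are themselves linearly ordered.
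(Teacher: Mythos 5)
Your stratification-by-length approach is a genuinely different route from the paper's. The paper chooses a linear extension of the partial order on $A$ (labelling $\fp_1,\dots,\fp_n$ so that $\fp_i\supset\fp_j$ forces $i<j$), applies Lemma~\ref{lem-fracturecube-sectionreduction} once to the singletons $\{\fp_i\}$ over $\CP(A)'$, notes that vertices indexed by non-chains vanish, observes that $s(A)'$ is a sieve inside $\CP(A)'$, and concludes by a right Kan extension argument (via \cite{groth-pointed-der}). Your version forces a two-layer homotopy limit (outer cube over strata, inner products within each stratum) followed by a Fubini/cofinality collapse that the paper simply never has to perform.

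More importantly, your first step is wrong as stated. With the paper's definition of length, $A_0=\{\fp\in A:\ell(\fp)=0\}$ consists of the \emph{minimal} primes of $A$, not the maximal ones as you assert, and then the key verification fails: for $\fp\in A_i$ and $\fq\in A_j$ with $j<i$, a strict inclusion $\fp\supset\fq$ only gives $\ell(\fp)\geq\ell(\fq)+1=j+1$, which is entirely consistent with $\ell(\fp)=i>j$ — there is no contradiction, and indeed inclusions from a higher-length prime down into a lower-length prime are exactly what length is supposed to record. Remark~\ref{rmk-conditionfracturecube-sectionreduction}(i) needs $\fp\not\supseteq\fq$ for $\fp$ in a \emph{later} block and $\fq$ in an \emph{earlier} one, so the strata of maximal primes must come first. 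To repair the argument you should stratify by coheight (the dimension of chains \emph{above} $\fp$ in $A$), so $A_0$ really is the set of maximal primes of $A$, or equivalently hand Lemma~\ref{lem-fracturecube-sectionreduction} the reversed tuple $(A_m,\dots,A_1,A_0)$ with $A_i$ the length-$i$ stratum. With that fix the verification goes through; the remaining gap is the one you flag yourself, namely identifying the limit of the nested diagram with $\holim_{C\in s(A)'}\varphi_CX$ as a coherent diagram rather than vertex-by-vertex. The paper avoids precisely this bookkeeping by fracturing at singletons from the outset and restricting along the sieve $s(A)'\hookrightarrow\CP(A)'$ in one step.
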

\begin{proof}
This is a slight adaptation of \cite[Prop.~7.12]{itloc}.

By the assumption on $A$ we have finitely many Balmer primes. Thus, we can label all of them $\fp_1, \fp_2,\dots, \fp_n$ with an ordering such that if $\fp_i\supset\fp_j$ then $i<j$, equivalently if $i > j$ then $\fp_i \not \supseteq \fp_j$. The above Corollary~\ref{cor-facturenoetherianbalmerspectrum-sectionstratification} states the Bousfield classes $\bc{g(\fp_i)}$ satisfy the key assumption of Lemma~\ref{lem-fracturecube-sectionreduction}. Hence, we have an isomorphism
\[ \LL_AX \cong \holim_{B \in \CP(A)'}\LL_{\fp_{b_1}}\LL_{\fp_{b_2}}\dots \LL_{\fp_{b_k}}X \]
where $B=\{\fp_{b_1}, \fp_{b_2}, \dots, \fp_{b_k} \}$ with $b_1<b_2<\dots<b_k$.

The assumption that $\CT$ admits a geometric model $\CC$ implies that this diagram can be realized in $\CC$ rather than at the homotopy level of $\CT$. More precisely, there exists an element $\CD$ of $\text{Ho}(\CC^{\CP(A)'})$ that when passed through the forgetful functor $\text{Ho}(\CC^{\CP(A)'})\rightarrow \text{Ho}(\CC)^{\CP(A)'}$ produces the above diagram over $\CP(A)'$ of iterated localizations $\LL_{\fp_{b_1}}\LL_{\fp_{b_2}}\dots \LL_{\fp_{b_k}}X$.

It is immediate to see that many of compositions of localizations of the invoked puncture cube are actually trivial: the composition $\LL_{\fp_{b_1}}\LL_{\fp_{b_2}}\dots \LL_{\fp_{b_k}}$ is zero unless $\fp_{b_1}\supset \fp_{b_2}\supset \dots \supset \fp_{b_k}$.

Observe that the inclusion $j \colon s(A)'\hookrightarrow \CP(A)'$ is the inclusion of a downward closed subset (a sieve in the sense of \cite[Def.~1.28]{groth-pointed-der}): clearly any subset of a chain of elements of $A$ must remain totally ordered.

As we explained, if $B$ is not a chain we have $\LL_{\fp_{b_1}}\LL_{\fp_{b_2}}\dots \LL_{\fp_{b_k}}X=0$. Therefore, $\CD$ belongs to the essential image of the right Kan extension $j_*\colon \text{Ho}(\CC^{s(A)'})\rightarrow \text{Ho}(\CC^{\CP(A)'})$ (see \cite[Prop.~1.29]{groth-pointed-der}). This is equivalent to the unit $\CD\rightarrow j_*j^*\CD$ being an isomorphism (\cite[Lemma~1.27]{groth-pointed-der}).

We set $\star$ to be the category with just one object and as morphisms only the identity. For any indexing category $I$ we define $\pi_I \colon I\rightarrow \star$ to be the projection. This induces a restriction functor $(\pi_I)^*\colon \text{Ho}(\CC)\cong \text{Ho}(\CC^{\star})\rightarrow \text{Ho}(\CC^I)$ which has as right adjoint the right Kan extension $(\pi_I)_*\colon \text{Ho}(\CC^I)\rightarrow \text{Ho}(\CC)$. 

The homotopy limit of a diagram indexed over $I$ coincides with  $(\pi_I)_*$, thus we have
\[ \LL_A X=(\pi_{\CP(A)'})_*\CD\cong (\pi_{\CP(A)'})_*j_*j^*\CD. \]
Right Kan extensions can be formed functorially, thus $(\pi_{\CP(A)'})_*j_*=(\pi_{s(A)'})_*$ and we obtain
\[ \LL_AX\cong (\pi_{s(A)'})_*j^*\CD=\holim_{C \in s(A)'} \varphi_C X.\]
\end{proof}
Proposition~\ref{prop-LambdaAholimchains-sectionfinitecase} provides the first step in adapting the results of \cite{itloc} to the current context. Indeed, we can prove that if we are considering only finitely many Bousfield classes and the behaviour of the compositions of the associated localizations is regulated by  an ordering on the indexing poset (Corollary~\ref{cor-facturenoetherianbalmerspectrum-sectionstratification}) all the fundamental outcomes of   \cite{itloc} hold, mutatis mutandi.

To facilitate the translation from the notation of \cite{itloc} to the one adopted here, we propose the following dictionary.
\vspace{0.4cm}
\begingroup
\def\arraystretch{1.8}
\begin{longtable}{|| p{2cm} | p {4.7cm} | p{4.7cm} ||}
\hline 
\textbf{Concept} &  \textbf{Iterated chromatic localizations}& \textbf{Localizations over a finite poset}  \\
\hline
poset indexing single Bousfield classes & $N=\{0,1,\dots, n^*-1 \}$, total order $<$ & finite $F\subseteq\text{Spc}(\CT^c)$, partial order $\supset$ \\
subsets of the poset recovering joins of  B. classes & $\mathbb{P}=\CP(N)$ & $s(F)$ \\
Bousfield localization & $\lambda_A$ for $A\subseteq N$  & $\LL_{A}$ for $A\subseteq F$  \\
fracture axiom & $\lambda_i \lambda_A=0$ if $i>\max A$ & $\LL_{\fp}\LL_A=0$ if $\forall \fq\in A$ \ $\fp\not \supseteq \fq$  \\
fracture cube & $\lambda_A X=\holim_{B \in \CP(A)'}\phi_B X$ & $\LL_AX=\holim_{B \in s(A)'}\varphi_B X$\\
relation on subsets & $A \angle B$ if $a\leq b$ \ $\forall a \in A, b \in B$& $C \angle D$ if $\fp\supseteq \fq$ \ $\forall \fp \in C, \fq \in D$\\
poset indexing compositions of localizations & $\mathbb{Q}$: upward closed subsets of $\mathbb{P}$ & $\mathbb{U}$: upward closed subsets of $s(F)$\\
map $u$ relating the posets & $u\colon \mathbb{P}\rightarrow \mathbb{Q}$ \ $uA=\{ B \in \mathbb{P} : A \subseteq B \}$ & \parbox[t]{5.5cm}{ $ u\colon s(F)\rightarrow \mathbb{U}$ \\ $uC=\{ D \in s(F): C \subseteq D \}$ } \\ 
element indexing $\lambda_A$ & {$vA$: subsets of $N$ with  non-empty intersection with $A$} &    $wA$: chains in $F$  with non-empty intersection with $A$ \\
adjunction connecting the fracture cube to the above element & \parbox[t]{4cm}{$\CP(A)' \rightleftarrows vA$ \\ left adj. $\CP(A)'\hookrightarrow vA$ \\ right adj. $vA \rightarrow \CP(A)'$ \ $B \mapsto B\cap A$} & \parbox[t]{4cm}{$s(A)' \rightleftarrows wA$ \\ left adj. $s(A)' \hookrightarrow wA$ \\ right adj. $wA \rightarrow s(A)'$ \ $B\mapsto B \cap A$ }\\
operation modelling composition of localizations & for $U, V \in \LQ$ set  $U\ast V=\{A \cup B : A \in U,  B \in V, \ A\angle B \}$   &  for  $U, V \in \mathbb{U}$ set $U\ast V=\{C \cup D : C \in U, \ D \in V, \ C\angle D \}$   \\
\parbox[t]{4cm}{tread sets of \\ $\mathbb{A}$} & $T(\mathbb{A})=\{ T \in \mathbb{P} :  \exists a_i \in T \cap A_i, \ a_1\leq\dots \leq a_{n} \}$ & $T(\mathbb{A})=\{ T \in s(F) : \exists \fp_i \in T \cap A_i, \ \fp_1\supseteq \dots \supseteq\fp_{n}  \}$\\
decomposition of $T(\mathbb{A})$ & $T(\mathbb{A})=vA_1\ast vA_2 \ast \dots \ast vA_n$ &   $T(\mathbb{A})=wA_1\ast wA_2 \ast \dots \ast wA_n$\\
consequence on $\lambda_{\mathbb{A}}$ & $T(\mathbb{A})=T(\mathbb{B})$ $\Rightarrow$ $\lambda_{\mathbb{A}}\cong \lambda_\mathbb{B}$ & $T(\mathbb{A})=T(\mathbb{B})$ $\Rightarrow$ $\LL_{\mathbb{A}}\cong \LL_\mathbb{B}$ \\
\hline
\end{longtable}
\endgroup
\vspace{1mm}
\begin{theorem}
Let $\CT$ be a stratified tensor triangulated category with a noetherian Balmer spectrum and fix a finite subset $F\subseteq \emph{Spc}(\CT^c)$. Then, the iterated localizations $\LL_{\LA}$ for $\LA\in \CN(F)$ follow the results of \cite[Thm.~1.8]{itloc}, after translating the notation using the above dictionary.
\end{theorem}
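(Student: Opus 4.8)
The plan is to show that every structural ingredient used in the proofs of \cite{itloc} --- the indexing posets, the join decomposition, the fracture cube, the operation modelling composition, and the combinatorial identity $T(\LA)=T(\LB)\Rightarrow \LL_{\LA}\cong\LL_{\LB}$ --- has a faithful translation into the present setting, line for line with the dictionary. Since $F$ is finite, the poset $(F,\supseteq)$ plays exactly the role that the linear order $(N,<)$ plays in \cite{itloc}, except that the chains $s(F)$ (rather than the full power set $\CP(N)$) form the combinatorial index for joins of Bousfield classes; this is the only genuine novelty, and everything downstream must be checked to go through with $s(F)$ in place of $\CP(N)$.

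First I would verify the replacement $\CP(N)\rightsquigarrow s(F)$ at the level of the fracture cube: Proposition~\ref{prop-LambdaAholimchains-sectionfinitecase} already does this, expressing $\LL_AX$ as a homotopy limit over $s(A)'$ via the sieve inclusion $s(A)'\hookrightarrow\CP(A)'$ and a right Kan extension argument. Next I would transport the combinatorial backbone: the poset $\mathbb{U}$ of upward-closed subsets of $s(F)$, the map $u\colon s(F)\to\mathbb{U}$, $uC=\{D\in s(F):C\subseteq D\}$, and for each subset $A\subseteq F$ the element $wA=\{C\in s(F):C\cap A\neq\emptyset\}\in\mathbb{U}$, together with the adjunction $s(A)'\rightleftarrows wA$ whose right adjoint is $B\mapsto B\cap A$. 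The relation $C\angle D$ (meaning $\fp\supseteq\fq$ for all $\fp\in C$, $\fq\in D$) replaces $A\angle B$, and the star operation $U\ast V=\{C\cup D: C\in U, D\in V, C\angle D\}$ on $\mathbb{U}$ replaces the one on $\mathbb{Q}$. The key point is that the fracture axiom in \cite{itloc} ($\lambda_i\lambda_A=0$ when $i>\max A$) is exactly mirrored by Corollary~\ref{cor-facturenoetherianbalmerspectrum-sectionstratification} ($\LL_\fp\LL_A=0$ when $\fp\not\supseteq\fq$ for all $\fq\in A$), which is precisely the hypothesis that makes the homotopy-limit cube of Lemma~\ref{lem-fracturecube-sectionreduction} collapse onto chains. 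With these substitutions in place, each proof in \cite{itloc} --- the identification $\LL_A X\cong\holim_{C\in wA}\varphi_C X$, the compatibility $\LL_{A_1}\LL_{A_2}\cdots\LL_{A_n}$ with the star operation so that $T(\LA)=wA_1\ast\cdots\ast wA_n$, and finally the conclusion $T(\LA)=T(\LB)\Rightarrow\LL_{\LA}\cong\LL_{\LB}$ --- is reproduced verbatim after textual substitution, since none of those proofs uses the linearity of $N$ beyond the fracture axiom and the existence of a linear extension allowing the cube argument; and $(F,\supseteq)$ being finite always admits such a linear refinement, which is the ordering $\fp_1,\dots,\fp_n$ chosen in Proposition~\ref{prop-LambdaAholimchains-sectionfinitecase}.

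The main obstacle I anticipate is not any single deep step but the bookkeeping of the adjunction and Kan-extension arguments when $\CP(A)'$ is replaced by a proper sieve $s(A)'$: one must check that the right Kan extension along $s(A)'\hookrightarrow\CP(A)'$ behaves well enough that the homotopy limit over $\CP(A)'$ of the (mostly zero) diagram agrees with the homotopy limit over $s(A)'$, and likewise that the star operation on $\mathbb{U}$ interacts with $w(-)$ in the same way the original does on $\mathbb{Q}$ --- in particular that $wA\ast wB$ still computes the thread sets of the pair, using that a chain meeting $A$ and a chain meeting $B$ with the $\angle$ relation glue to a chain. This is where the geometric-model hypothesis (passing to $\Ho(\CC)$) is essential, exactly as in \cite{itloc}, so the ``realize the diagram in $\CC$, not just in $\Ho(\CC)$'' step of Proposition~\ref{prop-LambdaAholimchains-sectionfinitecase} must be invoked at every stage where a homotopy limit is manipulated functorially. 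Once this is granted, the theorem is a formal transport of \cite[Thm.~1.8]{itloc} and \cite[Prop.~1.10]{itloc}, and I would simply state it as such, pointing to the dictionary table for the precise correspondence and remarking that the proofs of the cited results are insensitive to the replacement of the total order by the finite poset $(F,\supseteq)$ together with its linear refinement.
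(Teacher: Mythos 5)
Your proposal matches the paper's proof: both argue by a dictionary-driven translation of \cite{itloc}, identifying Corollary~\ref{cor-facturenoetherianbalmerspectrum-sectionstratification} as the new fracture axiom and Proposition~\ref{prop-LambdaAholimchains-sectionfinitecase} as the new fracture cube, and then observing that the combinatorial posets ($\mathbb{U}$, $w(-)$, the star operation, etc.) and the homotopy-limit arguments carry over verbatim once $\CP(N)$ is replaced by $s(F)$. You are, if anything, somewhat more explicit than the paper about where care is needed (the sieve/Kan-extension bookkeeping and the $\angle$-gluing of chains behind $wA\ast wB$), but the substance and the route are the same.
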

\begin{proof}
It is just a matter of realizing that the proofs of all the results of \cite{itloc} translate perfectly in the new language. As we explained above, Corollary~\ref{cor-facturenoetherianbalmerspectrum-sectionstratification} is the adaptation of the fracture axiom \cite[Def.~2.1]{itloc} to this new context and it guarantees that $\LL_AX$ can be expressed as appropriate homotopy limit of compositions of single Bousfield classes, as proved in Proposition~\ref{prop-LambdaAholimchains-sectionfinitecase}.

This time we have to index the homotopy limit over the set of chains of elements in $A$, thus we pass from $\mathcal{P}(A)$ to $s(A)$. Consequently, we adapt the definition of all the posets used in \cite{itloc}: $vA$ indexing the localization $\lambda_A$, the poset $\mathbb{P}$ indexing the cubes of iterated $\phi_B$-localizations, $\mathbb{Q}$, the morphism $u\colon \mathbb{P}\rightarrow \LQ$ which lets us pass from the fully localizing cubes to the fracture diagrams, and so on.

After this passage is done, it is immediate that the proof works verbatim and all the results carry on in the new setting. For the more technical proofs working with the combinatorics of posets, we observe that even if in this situation $F$ is not a totally ordered set, its subsets we decided to consider are chains hence the previous proof really can be translated without issue.

All the core results on iterated localizations developed in \cite{itloc} are collected in \cite[Thm.~1.8]{itloc}.
\end{proof}

\begin{corollary}\label{cor-conjturefinitecase-sectionfinitecase}
Conjecture~\ref{conj} holds for any stratified tensor triangulated category with finite Balmer spectrum.
\end{corollary}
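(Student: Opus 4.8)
The plan is to reduce the statement immediately to the preceding Theorem by taking $F = \text{Spc}(\CT^c)$. First I would dispose of the standing hypotheses of Conjecture~\ref{conj}: a finite topological space satisfies the descending chain condition on closed subsets, hence is noetherian, and it obviously has finite Krull dimension, so for a stratified category $\CT$ with finite Balmer spectrum those hypotheses are automatic and nothing needs to be assumed beyond finiteness and stratification.

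Next I would match up the combinatorial data. Since $\text{Spc}(\CT^c)$ is finite, any two tuples $\LA = (A_1, \dots, A_k)$ and $\LB = (B_1, \dots, B_l)$ of subsets of $\text{Spc}(\CT^c)$ belong to $\CN(F)$ for $F = \text{Spc}(\CT^c)$, and the poset $s(F)$ of chains of $F$ is exactly the collection of chains $T \subseteq \text{Spc}(\CT^c)$ used in the definition of a thread set preceding Conjecture~\ref{conj}. Consequently the notion of thread set appearing in the Conjecture coincides with the one in the corresponding row of the dictionary, and the hypothesis $T(\LA) = T(\LB)$ is literally the hypothesis of the dictionary-translated version of \cite[Prop.~1.10]{itloc}. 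The preceding Theorem asserts that every result of \cite{itloc} holds in the translated language, and the final row of the dictionary is precisely the implication $T(\LA) = T(\LB) \Rightarrow \LL_{\LA} \cong \LL_{\LB}$; this gives the desired isomorphism. For canonicity in the sense of Remark~\ref{rmk-isoconjcanonical-sectionbasics}, I would observe that in \cite{itloc}, and hence after translation, this isomorphism is not constructed ad hoc but is assembled from the homotopy-limit presentation of Proposition~\ref{prop-LambdaAholimchains-sectionfinitecase} together with the universal properties of the Bousfield localizations, so compatibility with the structural natural transformations between iterated localizations is automatic.

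At this stage there is essentially no obstacle: all the genuine work has already been absorbed into the preceding Theorem, i.e.\ into verifying that the poset-combinatorial arguments of \cite{itloc} --- stated there for a totally ordered indexing set --- survive when the indexing set is only a finite poset. The one point that requires care there, and which I would flag as the true heart of the matter, is that one must restrict attention throughout to chains; this is harmless because every subset of a chain is again a chain, so the sieve and right Kan extension bookkeeping underlying Proposition~\ref{prop-LambdaAholimchains-sectionfinitecase} and the analogous steps of \cite{itloc} are unaffected.
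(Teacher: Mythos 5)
Your proof is correct and takes essentially the same approach as the paper: the paper treats the Corollary as following immediately from the preceding Theorem by taking $F = \text{Spc}(\CT^c)$, and you have correctly spelled out that reduction, including the automatic verification of the noetherianity and finite-dimensionality hypotheses for a finite space, the identification of thread sets, and the source of canonicity in the homotopy-limit construction.
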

\begin{proof}
From \cite[Thm.~1.8]{itloc}, it follows \cite[Prop.~1.10]{itloc}, which immediately implies Conjecture~\ref{conj}.
\end{proof}

\begin{example}\label{ex-counterexformulaloclim-sectionfinite}
We provide a counterexample to Proposition~\ref{prop-LambdaAholimchains-sectionfinitecase} when $A$ is not finite.

We set $\LT=S^1$ and consider $\CT=\LT\da \CS p_{\LQ}$, the rational $\LT$-equivariant stable homotopy category. In Example~\ref{ex-GSpQstratified-sectionbalmer} we recalled that it is a stratified tensor triangulated category and we gave a description of its Balmer spectrum. Since the Lie group is abelian the conjugacy classes coincide with the closed subgroups of $\LT$. These are the trivial subgroup $e$, the cyclic subgroups $C_n$ for $n\geq 2$ and the whole group $\LT$. We have cotoral inclusions $e\leq_{ct}\LT$ and $C_n\leq_{ct}\LT$ but obviously not between different finite subgroups. Therefore, the Balmer spectrum has the following form
\begin{center}
\begin{tikzcd}
 & &  \LT \arrow[dll, leftarrow] \arrow[dl, leftarrow, shorten=1mm] \arrow[d, leftarrow] \arrow[drr, leftarrow] & & \\
 e & C_2 & C_3 & \dots & C_n &\dots
\end{tikzcd}
\end{center}
where the arrows denote the containment of Balmer primes. Its proper closed subsets consist in the finite collections of finite subgroups.

We take $A=\text{Spc}(\LT\da \CS p_{\LQ}^c)$ and show that the formula
\[ X=\LL_AX\cong \holim_{B\in s(A)'}\varphi_BX  \]
is false for a particular spectrum $X$.

Since we are considering the whole Balmer spectrum the poset $s(A)'$ has the following structure
\begin{center}
\begin{tikzcd}[column sep=0.5cm]
 & & \{\LT \} \arrow[dll] \arrow[dl] \arrow[d] \arrow[drr]& &  \\
 \{e\leq_{ct}\LT\}& \{C_2\leq_{ct}\LT\}&  \{C_3\leq_{ct}\LT\}&  \cdots & \{C_n\leq_{ct}\LT\} & \cdots \\
\{e\} \arrow[u] & \{C_2 \} \arrow[u] & \{ C_3 \} \arrow[u]& \cdots & \{ C_n \} \arrow[u] & \cdots
\end{tikzcd}
\end{center}
If $\CF$ denotes the family of finite subgroups we have $\LL_{\LT}\cong \widetilde{E}\CF\wedge -$ while for a proper subgroup $C<\LT$ we have $\LL_{C}=\Lambda_C$. Hence, $X$ should coincide with the homotopy limit of the diagram
\begin{center}
\begin{tikzcd}[column sep=0.5cm]
 & & \widetilde{E}\CF\wedge X \arrow[dll] \arrow[dl] \arrow[d] \arrow[drr]& &  \\
 \widetilde{E}\CF\wedge \Lambda_{e}X& \widetilde{E}\CF\wedge \Lambda_{C_2}X&   \widetilde{E}\CF\wedge \Lambda_{C_3}X&  \cdots &  \widetilde{E}\CF\wedge\Lambda_{C_n}X & \cdots \\
\Lambda_{e}X \arrow[u] & \Lambda_{C_2}X \arrow[u] & \Lambda_{C_3}X  \arrow[u]& \cdots & \Lambda_{C_n}X \arrow[u] & \cdots
\end{tikzcd}
\end{center}
We consider $X=E\CF_+$. A consequence of \cite[Thm.~2.2.3]{gre-rationals1equiv} is that the following decomposition holds
\[ E\CF_+\cong \bigvee_{C\in \CF} E\bc{C}. \]
For such $X$, we have $\LL_{\LT}X=0$. Thus, the homotopy limit of the previous diagram coincides with the limit of the diagram
\begin{center}
\begin{tikzcd}[column sep=0.5cm]
 0 \arrow[d]&0 \arrow[d] & 0  \arrow[d] &\cdots &0\arrow[d] & \cdots \\
 \widetilde{E}\CF\wedge \Lambda_{e}X & \widetilde{E}\CF\wedge \Lambda_{C_2}X&  \widetilde{E}\CF\wedge \Lambda_{C_3}X&  \cdots & \widetilde{E}\CF\wedge \Lambda_{C_n}X & \cdots \\
\Lambda_{e}X \arrow[u] & \Lambda_{C_2}X \arrow[u] & \Lambda_{C_3}X \arrow[u]& \cdots & \Lambda_{C_n}X \arrow[u]& \cdots
\end{tikzcd}
\end{center}
This limit consists in the product of the limits of the spans $\Lambda_C X\rightarrow \widetilde{E}\CF\wedge \Lambda_{C}X \leftarrow 0$. Forming the limit of  one of such spans coincides with taking the cofiber of the morphism $\Lambda_CX\rightarrow \widetilde{E}\CF\wedge \Lambda_{C}X$ which is $E\CF_+\wedge \Lambda_CX$. This term  can be shown to coincide with $E\bc{C}$. Thus, the canonical map from $\LL_AX$ to the homotopy limit is the usual comparison map
\[ E\CF_+\cong  \bigvee_{C\in \CF}E\bc{C}\rightarrow \prod_{C\in \CF}E\bc{C} \]
from an infinite wedge of objects to their product. 

This is not an isomorphism. Indeed, \cite[Lemma~2.3.5~(b)]{gre-rationals1equiv} provides the computation $\pi_1^{\mathbb{T}}(E\bc{C})\cong \mathbb{Q}$ for any cyclic group $C$. Therefore, as we argued in Example~\ref{ex-SpcO(2)counterexamplefractureaxiom-sectionstratification}, taking the image of this map under $[\Sigma S^0,-]$ we obtain the canonical comparison map between an infinite sum and an infinite product of copies of $\mathbb{Q}$.
\end{example}

\section{Low dimensional cases}\label{section-lowdimcases}
As one would expect, the situation when the Balmer spectrum has dimension 0 is trivial.
\begin{proposition}\label{prop-conjtruedim0-sectionlowdim}
Let $\CT$ be a stratified tensor triangulated category such that its Balmer spectrum is a noetherian space of dimension 0. Then, Conjecture~\ref{conj} holds for $\CT$.
\end{proposition}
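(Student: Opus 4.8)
The plan is to observe that the hypotheses force $\text{Spc}(\CT^c)$ to be finite, and then to invoke Corollary~\ref{cor-conjturefinitecase-sectionfinitecase}. First I would recall that a Balmer spectrum is a spectral space and hence sober, so every irreducible closed subset is the closure $\overline{\{\fp\}}$ of a unique point. Having Krull dimension $0$ means every such closure equals the singleton $\{\fp\}$; in particular the space is $T_1$ and its irreducible components are precisely its one-point subsets. Since a noetherian space has only finitely many irreducible components, $\text{Spc}(\CT^c)$ is finite, and Conjecture~\ref{conj} for $\CT$ is then exactly the content of Corollary~\ref{cor-conjturefinitecase-sectionfinitecase}.

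If one prefers to avoid appealing to the finite case, there is an equally short direct argument. In dimension $0$ there are no proper inclusions of Balmer primes, so by Lemma~\ref{lem-thomason=downwardclosed-sectionbalmer} every subset of $\text{Spc}(\CT^c)$ is Thomason; in particular, for every $A$ the complement $A^c$ is Thomason, whence (by the discussion following Remark~\ref{rmk-suppX=suppDX-sectionBousfieldloc}) $\LL_A$ agrees with the smashing localization associated to the Thomason subset $A^c$. These smashing localizations commute and compose by intersection, so $\LL_{\LA}=\LL_{A_1}\cdots\LL_{A_k}$ depends only on $A_1\cap\cdots\cap A_k$. On the other hand, any thread $\fp_1\supseteq\cdots\supseteq\fp_k$ of $\LA$ must be constant, so a chain is a thread set of $\LA$ if and only if it is a singleton $\{\fp\}$ with $\fp\in A_1\cap\cdots\cap A_k$; thus $T(\LA)$ recovers the set $A_1\cap\cdots\cap A_k$. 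Combining the two observations, $T(\LA)=T(\LB)$ forces $A_1\cap\cdots\cap A_k=B_1\cap\cdots\cap B_l$, hence $\LL_{\LA}\cong\LL_{\LB}$, the isomorphism being the evident one coming from the identifications above.

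I do not expect any real obstacle. The only non-formal inputs are the standard fact that a noetherian spectral space of dimension $0$ is finite (used in the first approach) and the identification of $\LL_A$ with a finite localization when $A^c$ is Thomason (used in the second), both of which are immediate from material already recalled in the paper.
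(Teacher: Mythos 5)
Both of your routes are correct. Your second approach is essentially the paper's own proof in different clothing: the paper also first establishes that the spectrum is finite and discrete (it argues that dimension~$0$ forces $T_1$, which for spectral spaces is equivalent to Hausdorff, and a noetherian Hausdorff space is finite discrete), then uses the resulting product decomposition $\CT\simeq\prod_{\fp}L_{\fp}\CT$ to conclude $\LL_{\LA}\cong L_{\bigcap_i A_i}$ and reads off $T(\LA)=\{\{\fp\}:\fp\in\bigcap_i A_i\}$. You reach the same identifications by noting instead that every subset is simultaneously Thomason and the complement of a Thomason subset, so every $\LL_A$ is a smashing localization and the compositions are governed by intersections; this is a cosmetically different but mathematically equivalent way of getting $\LL_{\LA}=L_{\bigcap_i A_i}$. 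Your first approach, however, is a genuine shortcut the paper does not take: once one knows the spectrum is finite (which you obtain via sobriety together with the finiteness of irreducible components in a noetherian space, rather than via the Hausdorff characterization), one can simply cite Corollary~\ref{cor-conjturefinitecase-sectionfinitecase} and be done, with no need to compute $\LL_{\LA}$ or $T(\LA)$ at all. The cost of that shortcut is that it leans on the heavier machinery behind the finite case; the direct computation the paper gives (and your second argument reproduces) is more self-contained and gives explicit formulas.
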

\begin{proof}
The condition on the Balmer spectrum to be of dimension 0 means it is T1, but for spectral spaces this is equivalent to being Hausdorff (see \cite[Prop.~1.3.20]{spespa}). Noetherian Hausdorff spaces are finite discrete.

This implies that we have an equivalence of tensor triangulated categories
\begin{align*}
\CT &\simeq \prod_{\fp \in \text{Spc}(\CT^c)}L_{\fp}\CT\\
X&\mapsto (L_{\fp}X)_{\fp}.
\end{align*}
Consequently, for any subset $A\subseteq \text{Spc}(\CT^c)$ and any object $X\in \CT$ we have an isomorphism $\LL_{A}X\cong \prod_{\fp \in A}L_\fp X$. Thus, for an arbitrary tuple $\LA$ the equality $\LL_{\LA}=L_{\cap_i A_i}$ holds.

Since there are no proper inclusions of primes, all the possible chains of $\text{Spc}(\CT^c)$ reduce to singletons. Consequently, $T(\LA)=\{ \{\fp\}: \fp\in  \cap_i A_i \}$ and from this description the Conjecture follows easily.
\end{proof}

We now prove Conjecture~\ref{conj} for all Balmer spectra of dimension 1.

\begin{lemma}\label{lem-conjecturereducedthomasoncomplements-sectionlowdim}
Let $\CT$ be a stratified tensor triangulated category with noetherian Balmer spectrum. Suppose we have a finite decomposition $\emph{Spc}(\CT^c)=\bigcup_{i=1}^nZ_i$ where $Z_i$ are complements of Thomason subsets.

Suppose Conjecture~\ref{conj} holds for $L_Z\CT$ where $Z$ is any possible intersection of the above $Z_i$'s. Then, Conjecture~\ref{conj} holds for $\CT$.
\end{lemma}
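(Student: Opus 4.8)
The plan is to reduce iterated localizations on $\CT$ to iterated localizations on the finitely many categories $L_Z\CT$ via the fracture-cube machinery of Lemma~\ref{lem-fracturecube-sectionreduction}, and then to match up thread sets. First I would observe that since each $Z_i$ is the complement of a Thomason subset, the localization $\LL_{Z_i} = L_{Z_i}$ is smashing; hence the family $\{Z_1,\dots,Z_n\}$ satisfies condition~(ii) of Remark~\ref{rmk-conditionfracturecube-sectionreduction}, and these smashing localizations pairwise commute with $L_{Z_i\cap Z_j} = L_{Z_i}L_{Z_j}$. Applying Lemma~\ref{lem-fracturecube-sectionreduction} to the cover $\text{Spc}(\CT^c) = \bigcup_{i=1}^n Z_i$ expresses the identity functor $\mathrm{Id} = \LL_{\text{Spc}(\CT^c)}$ as the homotopy limit of the punctured cube $S = \{i_1 < \dots < i_k\} \mapsto L_{Z_S} = L_{Z_{i_1}\cap\dots\cap Z_{i_k}}$. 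Since homotopy limits commute with any exact functor preserving them, for any tuple $\LA$ on $\text{Spc}(\CT^c)$ we get
\[
\LL_{\LA} \cong \holim_{\emptyset \neq S \subseteq \{1,\dots,n\}} L_{Z_S}\LL_{\LA},
\]
so it suffices to prove, for each nonempty $S$, that $T(\LA) = T(\LB)$ implies a canonical isomorphism $L_{Z_S}\LL_{\LA} \cong L_{Z_S}\LL_{\LB}$, compatibly across the cube.

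The key step is then to identify $L_{Z_S}\LL_{\LA}$ with an iterated localization computed \emph{inside} the category $L_{Z_S}\CT$. Write $Z = Z_S$. Because $L_Z$ is smashing and the $g(\fp)$ for $\fp\in Z$ are (the images of) the idempotents for $L_Z\CT$, one checks that $L_Z\LL_A \cong \LL_{A\cap Z}^{L_Z\CT}\, L_Z$ for a single subset $A$, where the superscript indicates the localization formed in $L_Z\CT$ and we use $\text{Spc}((L_Z\CT)^c) \cong Z$ as in Remark~\ref{rmk-smahsinglocinducedinclusionbalmer-sectionbalmer}; iterating, using again that $L_Z$ commutes past each $\LL_{A_i}$ up to identifying supports, gives
\[
L_Z\LL_{\LA} \cong \LL_{\LA\cap Z}^{L_Z\CT}\, L_Z,
\]
where $\LA\cap Z := (A_1\cap Z,\dots,A_k\cap Z)$. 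Now $Z$ is open in $\text{Spc}(\CT^c)$, which is noetherian of finite Krull dimension; hence $Z$ (with the subspace topology and inclusion order) is again noetherian of finite Krull dimension, and $L_Z\CT$ is rigidly-compactly generated and stratified by \cite[Prop.~1.29, \S5]{BHS-stratification}-type restriction results. So Conjecture~\ref{conj} is available for $L_Z\CT$ by hypothesis.

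It remains to check the combinatorial input: $T(\LA) = T(\LB)$ as tuples over $\text{Spc}(\CT^c)$ implies $T(\LA\cap Z) = T(\LB\cap Z)$ as tuples over $Z$. This is where I expect the only real subtlety. A chain $T' \subseteq Z$ is a thread set for $\LA\cap Z$ iff there is a thread $\fp_1 \supseteq \dots \supseteq \fp_k$ with $\fp_i \in T'\cap A_i\cap Z$; since $T'\subseteq Z$ already, this says $T'$ is a chain in $\text{Spc}(\CT^c)$ that is a thread set for $\LA$ and contained in $Z$. Thus $T(\LA\cap Z) = \{ T' \in T(\LA) : T'\subseteq Z\}$, which depends only on $T(\LA)$ (and on $Z$), so equality of thread sets passes to the restriction. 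Therefore Conjecture~\ref{conj} for $L_Z\CT$ gives a canonical isomorphism $\LL_{\LA\cap Z}^{L_Z\CT} \cong \LL_{\LB\cap Z}^{L_Z\CT}$; precomposing with $L_Z$ and reassembling the homotopy limit over the cube yields $\LL_{\LA} \cong \LL_{\LB}$ on $\CT$. Canonicity in the sense of Remark~\ref{rmk-isoconjcanonical-sectionbasics} follows because each isomorphism in the cube is canonical and the comparison natural transformations from any $\LL_{\LC}$ are compatible with the smashing localizations $L_Z$, hence with the homotopy limit; the main obstacle is simply bookkeeping that the isomorphisms $L_Z\LL_{\LA} \cong \LL_{\LA\cap Z}^{L_Z\CT}L_Z$ are natural in the cube variable $S$ and compatible with the restriction maps $L_{Z_S} \to L_{Z_{S'}}$ for $S\subseteq S'$.
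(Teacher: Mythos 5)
Your overall strategy is the right one and close to the paper's: fracture over the finitely many $Z_S = \bigcap_{i\in S}Z_i$ via Lemma~\ref{lem-fracturecube-sectionreduction} and Remark~\ref{rmk-conditionfracturecube-sectionreduction}(ii), reduce each vertex to an iterated localization inside $L_{Z_S}\CT$, invoke the hypothesis there, and reassemble the homotopy limit using the canonicity from Remark~\ref{rmk-isoconjcanonical-sectionbasics}. Your combinatorial observation that $T(\LA\cap Z)=\{T'\in T(\LA): T'\subseteq Z\}$ depends only on $T(\LA)$ is also correct and is the same as in the paper.

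However, there is a genuine gap in the middle step, and it comes from the order in which you compose. You substitute $\LL_{\LA}X$ into the identity $X\cong\holim_{S}L_{Z_S}X$, so the terms you must control are $L_{Z_S}\LL_{\LA}$, with the smashing localization on the \emph{outside}. You then assert $L_Z\LL_A\cong \LL^{L_Z\CT}_{A\cap Z}L_Z$. This is false in general, because a Bousfield localization $\LL_A$ does not commute past a smashing localization. Concretely, take $\CT=L_1\CS p_{(p)}$ with Balmer spectrum $\{\CE_0\supset\CE_1\}$, $Z=\{\CE_0\}$ (open, complement of the Thomason subset $\{\CE_1\}$), and $A=\{\CE_1\}$. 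Then $L_Z\LL_A=L_{K(0)}L_{K(1)}$, which is nontrivial (it is the gluing corner of the chromatic fracture square), whereas $A\cap Z=\emptyset$ so your right-hand side $\LL^{L_Z\CT}_{\emptyset}L_Z$ is zero. So your proposed identification cannot be used.

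The fix is to reverse the order of application, which is what the paper does: rather than substituting $\LL_{\LA}X$ into the cube, apply the exact functor $\LL_{\LA}$ to the cube $\CC$ with $\CC_\emptyset X=X$ and $\CC_S X=L_{Z_S}X$. This preserves the (finite) homotopy limit and produces, at each vertex, the term $\LL_{\LA}L_{Z_S}X$ with the smashing localization on the \emph{inside}. Now Proposition~\ref{prop-LA=LdeltaA-sectionreduction} does exactly what you wanted: $\LL_{\LA}L_Z=\LL_{(A_1,\dots,A_k,Z)}\cong\LL_{\gamma\delta(A_1,\dots,A_k,Z)}=\LL_{\gamma\delta(A_1\cap Z,\dots,A_k\cap Z)}$, because $Z$ is upward closed (Lemma~\ref{lem-thomason=downwardclosed-sectionbalmer}) so downward concatenation against $Z$ intersects every $A_i$ with $Z$. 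All the subsets now live in $Z$, so this iterated localization factors through $L_Z$ and is computed in $L_Z\CT$, which is rigidly-compactly generated, stratified, and has Balmer spectrum $Z$; the hypothesis applies and the rest of your argument goes through.
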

\begin{proof}
We observe that by Lemma~\ref{lem-fracturecube-sectionreduction} and Remark~\ref{rmk-conditionfracturecube-sectionreduction} given an object $X\in \CT$ there exists a  cube diagram
\[  \mathcal{C}\colon \CP(\{1,\dots, n\})\rightarrow \CT \qquad I\mapsto \mathcal{C}_IX    \]
where the starting value is $\CC_{\emptyset}X=X$ and the value at the vertex labelled by $I$ is the smashing localization associated with $\bigcap_{i\in I}Z_i$, i.e. $\CC_IX=L_{\bigcap_{i\in I}Z_i}X$.

Now if we take a $k$-tuple $\LA=(A_1,\dots, A_k)$, we can apply $\LL_{\LA}$ to $\CC$ and we obtain  $\LL_{\LA}X$  expressed as the homotopy limit of the punctured cube with values $\LL_{\LA}L_{Z}$ where $Z$ ranges over all the possible intersections of the subsets $Z_i$.

By Proposition~\ref{prop-LA=LdeltaA-sectionreduction} we have that  the composite $\LL_{\LA}\LL_Z$ reduces to $\LL_{\gamma\delta(A_1,\dots, A_k, Z)}$. Since the $(k+1)$-tuple $\gamma\delta(A_1,\dots, A_k, Z)$ is downward concatenated and $Z$ is upward closed (Lemma~\ref{lem-thomason=downwardclosed-sectionbalmer}), it follows that all the elements of the tuple $\gamma\delta(A_1,\dots, A_k, Z)$ will be subsets of $Z$. Indeed, it is easy to see that $\gamma\delta(A_1,\dots, A_k, Z)=\gamma\delta(A_1\cap Z, A_2\cap Z,\dots, A_k \cap Z)$. 

We recall that for a general rigid-compactly generated tensor triangulated category $\CS$ and any complement of a Thomason subset $V\subseteq \text{Spc}(\CS^c)$, the localized category $L_V\CS$ is also a rigid-compactly generated tensor triangulated category. We saw in Remark~\ref{rmk-smahsinglocinducedinclusionbalmer-sectionbalmer} that the localization functor $\CS\rightarrow L_V\CS$ induces on the Balmer spectra the identification $V\cong \text{Spc}(L_V\CS^c)\hookrightarrow \text{Spc}(\CS^c)$. Moreover, if $\text{Spc}(\CS^c)$ is a (weakly) noetherian Balmer spectrum, so is $V\cong \text{Spc}(L_V\CS^c)$. Finally, if $\CS$ is stratified via the Balmer-Favi support, so is $L_V\CS$ by \cite[Cor.~4.9]{BHS-stratification}.

Therefore, in our situation the localized categories $L_Z\CT$ are in the form required by the assumptions of Conjecture~\ref{conj}, hence asking the Conjecture to hold in the case of these categories makes sense.

Suppose we have two tuples $\LA=(A_1,\dots, A_k)$ and $\LB=(B_1,\dots, B_l)$ such that $T(\LA)=T(\LB)$. It is immediate that this implies that for any $Z$ we have $T(A_1\cap Z,A_2\cap Z,\dots, A_k\cap Z)=T(B_1\cap Z, \dots, B_l\cap Z)$. Hence, at every vertex of the punctured cube we have a natural isomorphism between $\LL_{\LA}L_ZX$ and $\LL_{\LB}L_Z$ by the Conjecture holding on the tensor triangulated category $L_Z\CT$.

By the canonicity of these natural isomorphisms (Remark~\ref{rmk-isoconjcanonical-sectionbasics}), it follows that we obtain an isomorphism of the two punctured cubes. Thus, we have an isomorphism between their homotopy limits $\LL_{\LA}X\cong \LL_{\LB}X$ (natural in $X$).
\end{proof}

We now begin the argument for the case in dimension 1. First, we need a finiteness result.
\begin{theorem}[{\cite[Thm.~8.1.11]{spespa}}]\label{thm-spectralnoethfinitelymanyirred-sectionlowdim}
Let $Y$ be a spectral space. Then, $Y$ is noetherian if and only if for every $K\subseteq Y$ closed constructible subset $K$ has finitely many irreducible components and it does not admit an infinite chain of strictly decreasing closed constructible irreducible subsets.
\end{theorem}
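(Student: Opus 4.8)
The plan is to recast noetherianity in a workable form and then split into a routine forward implication and a substantive backward one.

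\emph{Reformulation.} For a spectral space $Y$ the conditions ``$Y$ is noetherian'', ``every open subset is quasi-compact'', and ``every closed subset is constructible'' are equivalent: the middle one is Definition~\ref{def-noetherian-sectionbalmer}; for the third, in a spectral space the closed constructible sets are exactly the complements of quasi-compact opens, a set which is both open and quasi-compact is automatically constructible, and conversely every constructible set is quasi-compact, being clopen in the patch-compact space $Y_{\mathrm{con}}$. Hence $Y$ is non-noetherian iff some open $U$ is not quasi-compact; as the quasi-compact opens contained in $U$ form an up-directed family with union $U$ and no largest element, dependent choice yields a strictly increasing chain $U_1\subsetneq U_2\subsetneq\cdots$ of quasi-compact opens, i.e.\ a strictly decreasing chain $F_1\supsetneq F_2\supsetneq\cdots$ of closed constructible subsets. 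So it suffices to prove: $Y$ admits an infinite strictly decreasing chain of closed constructible subsets if and only if some closed constructible subset has infinitely many irreducible components, or $Y$ admits an infinite strictly decreasing chain of closed constructible \emph{irreducible} subsets.

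\emph{The ``easy'' direction} (equivalent to the forward implication of the theorem): if a closed constructible $K$ has infinitely many irreducible components then $K$ is not noetherian, since a noetherian space has only finitely many irreducible components (choose, by the descending chain condition, a closed subset minimal among those not a finite union of irreducibles, and contradict); and because $K$ is constructible, its closed constructible subsets are exactly the closed constructible subsets of $Y$ contained in $K$, so applying the Reformulation to the spectral space $K$ produces an infinite strictly decreasing chain of closed constructible subsets of $Y$. An infinite strictly decreasing chain of closed constructible irreducibles is trivially such a chain. Either way $Y$ has an infinite strictly decreasing chain of closed constructible subsets, i.e.\ $Y$ is non-noetherian; taking contrapositives gives the forward implication.

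\emph{The ``hard'' direction.} Suppose $Y$ has an infinite strictly decreasing chain $F_1\supsetneq F_2\supsetneq\cdots$ of closed constructible subsets and assume, for contradiction, that no closed constructible subset of $Y$ has infinitely many irreducible components; I must build an infinite strictly decreasing chain of closed constructible irreducible subsets. The recursion: given a closed constructible $W$ and a non-stabilizing strictly decreasing chain $W\supsetneq D_1\supsetneq D_2\supsetneq\cdots$ of closed constructible subsets, write $W=C^{(1)}\cup\cdots\cup C^{(m)}$ with the $C^{(s)}$ its (by hypothesis finitely many) irreducible components; since $D_k=\bigcup_s(C^{(s)}\cap D_k)$ for every $k$, some component $C:=C^{(s)}$ has $\{C\cap D_k\}_k$ non-stabilizing, so after passing to a subchain $C\supsetneq C\cap D_{k_1}\supsetneq C\cap D_{k_2}\supsetneq\cdots$ is a non-stabilizing strictly decreasing chain of closed constructible subsets of the closed constructible \emph{irreducible} set $C$. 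Feeding $(F_1,\{F_{k+1}\})$ into this outputs an irreducible $Z_1:=C$; feeding in $(Z_1\cap D_{k_1},\{Z_1\cap D_{k_j}\}_{j\ge 2})$ outputs $Z_2$ with $Z_2\subseteq Z_1\cap D_{k_1}\subsetneq Z_1$; iterating, the $Z_i$ form an infinite strictly decreasing chain of closed constructible irreducible subsets, the desired contradiction.

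\emph{The main obstacle} is the one step on which the recursion rests: that an irreducible component $C$ of a closed constructible set $W$ is again \emph{closed constructible} in $Y$. That $C$ is closed is automatic; that $Y\setminus C$ is quasi-compact is not, and this is where the finiteness-of-components hypothesis is essential. From $W=C\cup\bigcup_{C'\ne C}C'$ with finitely many components one has $Y\setminus C=(Y\setminus W)\cup\bigcup_{C'\ne C}(C'\setminus C)$ with $Y\setminus W$ quasi-compact, reducing matters to a quasi-compactness statement for the nonempty open subsets $C'\setminus(C\cap C')$ of the remaining irreducible spectral spaces $C'$; controlling these requires invoking the hypothesis on the closed constructible subsets of the $C'$, together with the verification that this hypothesis descends from $Y$ to each closed constructible subspace and thence to the iterated components appearing in the recursion. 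Keeping straight the distinction between constructibility in $Y$ and in a closed subspace, and propagating the finiteness condition downwards, is the technical heart of the argument; granted this, everything else is formal manipulation with quasi-compact opens and the patch topology.
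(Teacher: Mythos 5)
This theorem is taken from Dickmann--Schwartz--Tressl and is merely \emph{cited} in the paper, not reproved, so there is no paper proof to compare against; the question is only whether your argument stands on its own.

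Your reformulation ($Y$ non-noetherian iff $Y$ admits an infinite strictly decreasing chain of closed constructible subsets) and the ``easy'' direction are correct. The problem is the ``hard'' direction. You correctly identify that the recursion hinges on the claim: \emph{if every closed constructible subset of $Y$ has finitely many irreducible components, then every irreducible component of a closed constructible set is itself closed and constructible.} You call this ``the technical heart'' and leave it to be filled in. Unfortunately it is not merely a gap --- the claim is \emph{false}. Take $C=\operatorname{Spec}\bigl(k[x_1,x_2,\dots]\bigr)$ with its unique closed point $p=(x_1,x_2,\dots)$, and form a new spectral space $Y=C\cup\{\eta\}$ by adjoining one extra point $\eta$ with $\overline{\{\eta\}}=\{\eta,p\}$ (this is $\operatorname{Spec}$ of the fibre product of $k[x_1,x_2,\dots]$ and a DVR over their common residue field $k$, or can be checked directly to be a coherent sober space). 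One checks that every closed constructible subset of $Y$ has only finitely many irreducible components: the closed constructible subsets of $Y$ are of the form $V(J)\subseteq C$ for finitely generated $J$, or $(C\setminus U)\cup\{\eta\}$ for a quasi-compact open $U\subseteq C$ with $p\notin U$, and in either case the components are the $V(P)$ for the finitely many primes $P$ minimal over a finitely generated ideal, together with possibly $\{\eta,p\}$. Nevertheless the irreducible component $\{\eta,p\}$ of the closed constructible set $Y$ is \emph{not} constructible, since $Y\setminus\{\eta,p\}=C\setminus\{p\}=\bigcup_i D(x_i)$ is not quasi-compact. (This does not contradict the theorem: $Y$ is not noetherian, and indeed condition (ii) fails for $Y$ via the chain $V(x_1)\supsetneq V(x_1,x_2)\supsetneq\cdots$.) Thus your recursion cannot even take its first step on such a $Y$: the chosen component $C^{(s)}$ need not be constructible, so $C^{(s)}\cap D_k$ need not be a closed constructible subset of $Y$, and the finiteness hypothesis cannot be reapplied at the next stage.

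The underlying difficulty is that ``$K$ has finitely many irreducible components'' is strictly weaker than ``$K$ is a finite union of \emph{closed constructible} irreducible subsets''; the example above separates the two. The source (DST, Thm.~8.1.11) is stated in terms of the latter, stronger condition, which meshes with the lattice of closed constructible sets (where topological irreducibility coincides with join-irreducibility and your recursion \emph{does} go through, using distributivity in place of the union-of-components argument). To repair the proof you would need either to switch to that formulation, or to prove the additional equivalence --- that under condition (ii) the two conditions (i) coincide --- which is itself non-trivial and is precisely the content you have skipped.
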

\begin{corollary}\label{cor-finmanyirrednoethsectionlowdim}
Any noetherian spectral space has finitely many irreducible components.
\end{corollary}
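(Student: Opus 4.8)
The plan is to obtain this as an immediate specialization of Theorem~\ref{thm-spectralnoethfinitelymanyirred-sectionlowdim}. The one fact to record first is that a spectral space $Y$ is itself a closed constructible subset of $Y$: it is trivially closed, and it is constructible because $Y$ is quasi-compact (spectral spaces are quasi-compact by definition) and open, hence lies in the Boolean algebra of constructible subsets generated by the quasi-compact opens. Thus $K=Y$ is an admissible choice in the criterion of Theorem~\ref{thm-spectralnoethfinitelymanyirred-sectionlowdim}.

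Next I would simply invoke the forward implication of that theorem: if $Y$ is noetherian, then every closed constructible $K\subseteq Y$ has finitely many irreducible components, and applying this with $K=Y$ yields exactly the assertion of the corollary. There is essentially nothing further to do; the argument is a one-liner once the ambient space has been identified as a closed constructible subset, and that identification is the only point requiring a moment's thought, so it is the nearest thing to an obstacle here.

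For completeness one could instead give a self-contained proof by noetherian induction on closed subsets: were the collection of closed subsets of $Y$ that are \emph{not} finite unions of irreducible closed subsets nonempty, it would contain a minimal element $Z$ (by the descending chain condition on closed sets); such a $Z$ would be reducible, say $Z=Z_1\cup Z_2$ with $Z_i$ proper closed subsets, and minimality would express each $Z_i$, hence $Z$, as a finite union of irreducibles, a contradiction. Taking $Z=Y$ and discarding redundant terms then gives finitely many irreducible components. However, routing through Theorem~\ref{thm-spectralnoethfinitelymanyirred-sectionlowdim} is the cleanest path and is the one I would take in the write-up.
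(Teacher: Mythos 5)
Your proposal is correct and matches the paper's intended argument exactly: the corollary is stated immediately after Theorem~\ref{thm-spectralnoethfinitelymanyirred-sectionlowdim} precisely because $Y$ is quasi-compact open, hence constructible, and closed in itself, so the forward implication with $K=Y$ gives the claim at once. The alternative noetherian-induction argument you sketch is also sound, but the one-line specialization is what the paper has in mind.
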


\begin{remark}
For a Balmer spectrum an irreducible component is the closure of a maximal prime. Hence, in this case Corollary~\ref{cor-finmanyirrednoethsectionlowdim} means that a noetherian Balmer spectrum has finitely many maximal primes.
\end{remark}

\begin{lemma}\label{lemma-intersectionmaxprimesfinite-sectionlowdim}
Let $\emph{Spc}(\CT^c)$ be noetherian of dimension $1$. Take $\fp, \fq \in \emph{Spc}(\CT^c)$ two distinct Balmer primes of length $1$. Then, the intersection $[\subseteq \fp]\cap[\subseteq \fq ]=\overline{\{\fp\}}\cap\overline{\{\fq\}}$ is finite.
\end{lemma}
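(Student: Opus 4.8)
The plan is to show that every element of $K:=\overline{\{\fp\}}\cap\overline{\{\fq\}}=[\subseteq\fp]\cap[\subseteq\fq]$ is a \emph{minimal} Balmer prime, and then to exploit noetherianity to conclude that there can be only finitely many of them.

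First I would rule out any containment among $\fp$ and $\fq$, and among the primes below them. If we had, say, $\fp\subsetneq\fq$, then since $\fp$ has length $1$ it is not minimal, so there is $\fr\subsetneq\fp$, giving a chain $\fr\subsetneq\fp\subsetneq\fq$ of dimension $2$; this contradicts the hypothesis $\dim\emph{Spc}(\CT^c)=1$ (equivalently, the fact that $\fq$ has length $1$). Hence $\fp\not\subseteq\fq$ and $\fq\not\subseteq\fp$. Now any $\fr\in K$ satisfies $\fr\subseteq\fp$ and $\fr\subseteq\fq$ (recall $\overline{\{\fp\}}=[\subseteq\fp]$, cf.\ Remark~\ref{rmk-thomasondownwardclosed-sectionbalmer} and Lemma~\ref{lem-thomason=downwardclosed-sectionbalmer}); it cannot equal $\fp$ or $\fq$ by what we just said, so $\fr\subsetneq\fp$, and since $\fp$ has length $1$ such an $\fr$ must be minimal (any chain strictly below $\fr$ would extend to a chain of dimension $\ge 2$ ending at $\fp$). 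So $K$ consists entirely of minimal primes, each of which is a closed point of $\emph{Spc}(\CT^c)$ because $\overline{\{\fr\}}=[\subseteq\fr]=\{\fr\}$ for $\fr$ minimal.

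Next I would finish by a finiteness argument. Being closed in the noetherian spectral space $\emph{Spc}(\CT^c)$, the set $K$ is itself a noetherian spectral space, all of whose points are closed. By Corollary~\ref{cor-finmanyirrednoethsectionlowdim} it has only finitely many irreducible components $K_1,\dots,K_m$, whose union is $K$; each $K_i$ is an irreducible closed subspace of a spectral space, hence the closure in $K$ of a unique generic point, and since that generic point is a closed point of $K$ we get $K_i=\{\eta_i\}$. Therefore $K=\{\eta_1,\dots,\eta_m\}$ is finite. (Alternatively, one could observe that $K$ is a $T_1$ spectral space, hence Hausdorff by \cite[Prop.~1.3.20]{spespa}, and a noetherian Hausdorff space is finite discrete — exactly the reasoning used in Proposition~\ref{prop-conjtruedim0-sectionlowdim}.)

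I do not expect a genuine obstacle here; the only steps needing a little care are the use of the finite-dimensionality (equivalently length-$1$) hypothesis to exclude containments among $\fp,\fq$ and the primes below them, and the identification of minimal Balmer primes with closed points so that the noetherian spectral space $K$ collapses to a finite set.
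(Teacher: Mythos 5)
Your proof is correct and follows essentially the same route as the paper: observe that $K=\overline{\{\fp\}}\cap\overline{\{\fq\}}$ is a closed (hence noetherian spectral) subspace, use the dimension/length hypothesis to rule out any nontrivial containments among its points, and then invoke Corollary~\ref{cor-finmanyirrednoethsectionlowdim} to conclude from finitely many irreducible components (which must be singletons) that $K$ is finite. The only difference is cosmetic: you spell out that elements of $K$ are minimal closed points and offer the $T_1$-Hausdorff alternative, whereas the paper simply notes $K$ is an antichain; both observations make the irreducible components singletons.
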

\begin{proof}
Observe that since $\overline{\{\fp\}}\cap\overline{\{\fq\}}\subseteq \text{Spc}(\CT^c)$ is a closed subset, it is again a noetherian spectral space. Hence, it must have finitely many irreducible components (Corollary~\ref{cor-finmanyirrednoethsectionlowdim}), which consist in the closures of some Balmer primes. But since $\fp \neq \fq$, all the elements of the intersection $\overline{\{\fp\}}\cap\overline{\{\fq\}}$ have no non-trivial inclusions. Thus, the above irreducible components are singletons and we conclude the subspace in question is finite.
\end{proof}
This will allow us to prove the Conjecture by induction. We first solve the starting step.
\begin{proposition}\label{prop-conjtruedim1irred-sectionlowdim}
Conjecture~\ref{conj} holds for a stratified tensor triangulated category $\CT$ with an irreducible noetherian Balmer spectrum of dimension 1.
\end{proposition}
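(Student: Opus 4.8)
We want to prove Conjecture~\ref{conj} when $\mathrm{Spc}(\CT^c)$ is irreducible, noetherian, of dimension $1$. Let me think about the structure of such a space. Irreducible means there is a unique generic point, i.e. a unique maximal (under inclusion) Balmer prime $\fm$; its closure is the whole space. Dimension $1$ means every other prime is minimal (length $0$), so $\mathrm{Spc}(\CT^c) = \{\fm\} \cup M$ where $M$ is the set of closed points, and the only inclusions are $\fp \subset \fm$ for $\fp \in M$. Chains are: the empty chain, singletons $\{\fm\}$, singletons $\{\fp\}$ for $\fp \in M$, and pairs $\{\fp \subset \fm\}$ for $\fp \in M$.

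**Key reduction.** The plan is to use the reductions of Section~\ref{section-firstreductions} together with the fracture-cube technology. After applying $\gamma\delta$ (Proposition~\ref{prop-LA=LdeltaA-sectionreduction}) we may assume $\LA$ and $\LB$ are collapsed and concatenated. In such a tuple, concatenation forces each $A_i$ to be "compatible" with its neighbours via the order. Given the very simple poset structure, I claim a concatenated collapsed tuple is severely constrained. Consider where $\fm$ can appear: since $\fm$ is the top, $\fm \in A_i$ imposes no upward constraint, but downward concatenation requires every element of $A_{i-1}$ to lie above some element of $A_i$; elements above $\fm$ only $\fm$ itself. So if $A_i = \{\fm\}$ alone for some middle index, collapsedness is violated with neighbours unless neighbours also shrink. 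The upshot I would establish is: a collapsed concatenated tuple on this space has length at most $2$, of one of the forms $(A)$, $(\{\fm\}, B)$ with $B \subseteq M$, $(A', \{\fm\})$ with... wait, $\fm \not\subseteq$ anything except itself, so $(A', \{\fm\})$ concatenated forces $A' = \{\fm\}$, collapsing. So really the normal forms are $(A)$ with $A$ arbitrary, and $(\{\fm\}, B)$ with $\emptyset \neq B \subseteq M$ and $\fm \notin B$ (the genuinely length-$2$ case), plus the trivial tuple. Then I must check that $T(\LA)$ determines which normal form we are in and its data: for $(A)$, $T(\LA) = \{\text{chains meeting } A\}$; for $(\{\fm\},B)$, $T(\LA)$ consists of chains $T$ with $\fm \in T$ and $T \cap B \neq \emptyset$ together with a thread, i.e. chains containing $\fm$ and some $\fp \in B$ — these are exactly $\{\fp \subset \fm\}$ and $\{\fm\}$ is excluded. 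So from $T(\LA)$ one reads off $B$ (the closed points $\fp$ with $\{\fp\subset\fm\} \in T(\LA)$) and whether the generic chain $\{\fm\}$ itself is a thread set (distinguishing the $(A)$ case with $\fm \in A$).

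**The analytic content.** Having reduced to comparing $\LL_{\LA}$ across the same normal form, the length-$\le 1$ case is trivial ($\LL_A$ depends only on $A$). The one substantive computation is: for $\emptyset \neq B \subseteq M$, identify $\LL_{(\{\fm\},B)} = \LL_{\fm}\LL_B$ up to canonical isomorphism with something manifestly depending only on $B$ (and the fact that we localize by $\fm$). Here I would invoke Proposition~\ref{prop-LALB=LAcapgeqBLB-sectionreduction} / the concatenation reductions to replace $B$ by $B \cap [\subseteq \fm] = B$ (no change), and then argue that $\LL_\fm \LL_B X$ fits into a homotopy pullback coming from Lemma~\ref{lem-fracturecube-sectionreduction} applied to $\mathrm{Spc}(\CT^c) = \{\fm\}^c{}^c \cup \dots$ — more precisely decompose via the Thomason complement $Z = \{\fm\}$ wait $\{\fm\}$ is open (its complement $M$ is closed since it is $\overline{\{\fm\}} \setminus \{\fm\}$... no, $M$ need not be closed). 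Let me instead use: $[\subseteq \fp]$ for $\fp \in M$ is $\{\fp\}$, closed; and $\mathrm{Spc}(\CT^c) \setminus \{\fp\}$ is a Thomason complement. The cleaner route is Lemma~\ref{lem-conjecturereducedthomasoncomplements-sectionlowdim}: cover by complements of Thomason subsets. But an irreducible dimension-$1$ space need not have a finite such cover separating points of $M$ when $M$ is infinite.

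**Main obstacle.** This is exactly the crux: $M$ can be infinite, and Proposition~\ref{prop-LambdaAholimchains-sectionfinitecase} fails for infinite subsets (Example~\ref{ex-counterexformulaloclim-sectionfinite} is precisely dimension $1$!). So I cannot naively write $\LL_B$ as a homotopy limit over finite chains. The fix I would pursue: since $B \subseteq M$ consists of pairwise incomparable closed points, and by Lemma~\ref{cor-facturenoetherianbalmerspectrum-sectionstratification} $\LL_\fp \LL_\fq = 0$ for distinct $\fp,\fq \in M$, the localization $\LL_B$ should split as a product $\prod_{\fp \in B} \LL_\fp$ — this needs an argument (the Balmer–Favi support of $g(B) = \coprod_{\fp\in B} g(\fp)$ is $B$, and the subcategory it generates decomposes because the supports are "orthogonal" closed points; one shows $\LL_B X \cong \prod_{\fp \in B}\LL_\fp X$ by checking both sides are $g(B)$-local with $g(B)$-acyclic cofiber of the natural map, using that a product of $g(\fp)$-local objects over distinct $\fp$ is $g(\fp')$-local for each $\fp'$ and has the right acyclicity). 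Then $\LL_\fm \LL_B X \cong \LL_\fm \prod_{\fp\in B}\LL_\fp X$, and I would need $\LL_\fm$ to commute past the product (it does: $\LL_\fm = \Lambda_{[\supseteq\fm\dots]}$... actually $\LL_\fm$, being localization at the generic point's complement's... hmm, $\{\fm\}^c = M$ is specialization-closed but is it Thomason? $M = \mathrm{Spc}(\CT^c)\setminus\{\fm\}$; its complement $\{\fm\}$ is open iff $\{\fm\}$ is quasi-compact open, true by visibility/noetherianity, so $M$ is Thomason and $\LL_\fm = \Lambda_M$ is a colocalization... no — $\LL_M$ is the smashing one and $\LL_\fm$ corresponds to the closed point complement). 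I'd instead directly show $\LL_\fm \LL_\fp X \cong$ (cofiber of $\Lambda_{\{\fp\}} X$ type term) and that it is computed from $X$, $\fp$, $\fm$ alone, hence any two tuples with the same $B$ give the same answer, and the whole thing is functorial in $B$, giving the canonical isomorphism of Remark~\ref{rmk-isoconjcanonical-sectionbasics}. The real work, and where I expect to spend the most effort, is making the product decomposition $\LL_B \cong \prod_{\fp\in B}\LL_\fp$ and its compatibility with $\LL_\fm$ rigorous for infinite $B$ — this replaces the failed finite-homotopy-limit formula and is the genuine new input beyond Section~\ref{section-finitecase}.
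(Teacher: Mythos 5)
Your overall strategy matches the paper's: apply $\gamma\delta$ to reduce to collapsed concatenated tuples, classify those tuples on the given $1$-dimensional irreducible spectrum, then read the tuple back off from the collection of thread sets. However, your enumeration of the normal forms is incomplete, and this is a genuine gap. You claim the only length-$2$ collapsed concatenated tuples are of the form $(\{\fm\},B)$ with $B\subseteq M$ (writing $\fm$ for the generic point). But $(\{\fm\}\cup C, D)$ with $C, D\subseteq M$ and $C\subsetneq D$ is also collapsed and concatenated: downward concatenation requires $C\subseteq D$ (since closed points are only $\geq$ themselves), upward is automatic (everything is $\leq \fm$), and $C\subsetneq D$ is exactly collapsedness. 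These tuples with $C\neq\emptyset$ are not reducible to $(\{\fm\},D)$ — indeed their thread sets are different, since $\{c\}$ for $c\in C$ is a thread set of the former but not the latter. Your reading-off argument ("chains containing $\fm$ and some $\fp\in B$") only detects the case $C=\emptyset$ and so does not pin down the normal form. The paper handles precisely this by extracting both $C$ (from singleton thread sets $\{\fq\}$) and $D$ (from thread sets $\{\fq,\ft\}$), distinguishing the two-term case from the single-term case by whether $C\subsetneq D$ or $C=D$.

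Separately, the long ``analytic content'' digression is unnecessary, and the obstacle you flag is not actually in play. You worry that Proposition~\ref{prop-LambdaAholimchains-sectionfinitecase} fails for infinite $B$, and that one must therefore establish a product decomposition $\LL_B\cong\prod_{\fp\in B}\LL_\fp$. But the proof never needs to express $\LL_B$ as a homotopy limit over chains, and nothing has to be computed about the functor $\LL_\fm\LL_B$. The only input from Section~\ref{section-firstreductions} is Proposition~\ref{prop-LA=LdeltaA-sectionreduction}, whose proof relies on the two-way fracture squares of Propositions~\ref{prop-LALB=LAcapgeqBLB-sectionreduction} and~\ref{prop-LALB=LALBcupnotleqA-sectionreduction} — i.e.\ Lemma~\ref{lem-fracturecube-sectionreduction} applied with just $n=2$ finitely many pieces (each piece may be infinite, which is fine). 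Once $T(\LA)=T(\LB)$ forces $\gamma\delta(\LA)=\gamma\delta(\LB)$ as tuples, the two iterated localizations are the very same functor and the isomorphism is canonical; no further identification is required. Your observation that Example~\ref{ex-counterexformulaloclim-sectionfinite} lives in dimension $1$ is correct, but it only shows that the holim-over-chains formula cannot be used here, not that there is an obstacle to the conjecture — the paper's proof of this proposition simply sidesteps that formula.
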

\begin{proof}
The spectrum being irreducible means it has a unique maximal prime, we denote it by $\ft$.

Using Proposition~\ref{prop-LA=LdeltaA-sectionreduction} we can consider only tuples of subsets of $\text{Spc}(\CT^c)$ which are collapsed and concatenated. Thus, we can see that $\LL_{\LA}$ reduces to one of the three following forms:
\begin{enumerate}
\item $\Lambda_C$ where $\ft \not \in C$;
\item $L_{\{\ft\}\cup C}$ where $\ft\not \in C$;
\item $L_{\{\ft\}\cup C}\Lambda_D$ where $\ft \not \in C$, $\ft\not \in D$ and $C\subsetneq D$.
\end{enumerate}
If we have $T(\LA)=T(\LB)$ for two tuples, then $T(\gamma\delta\LA)=T(\LA)=T(\LB)=T(\gamma\delta\LB)$ and we can reduce to examine only tuples as in the above three cases to verify the Conjecture.

We have:
\begin{enumerate}
\item $T(C)=wC=\{ T \in s(\text{Spc}(\CT^c)) : T\cap C\neq \emptyset\}$;
\item $T(\{\ft\}\cup C)=w(\{\ft \}\cup C)=\{ T \in s(\text{Spc}(\CT^c)) : T\cap(\{\ft\}\cup C)\neq \emptyset\}$;
\item $T(\{\ft\}\cup C, D)=\{ T \in s(\text{Spc}(\CT^c)) : \exists d\in D \ \ft, d\in T \quad \vee \quad \exists c \in C \ c \in T\}$.
\end{enumerate}
If $T(\LA)=T(\LB)$ does contain $\{\ft\}$, then $\LA$ and $\LB$ must be in the form $\{\ft\}\cup C$ and $\{\ft\}\cup C'$ respectively. Since $C=\{\fq \in \text{Spc}(\CT^c) \setminus \{\ft \}: \{\fq\}\in T(\LA) \}$ and similarly $C'=\{\fq \in \text{Spc}(\CT^c) \setminus \{\ft \}: \{\fq\}\in T(\LB) \}$, we conclude $C=C'$.

If instead $T(\LA)=T(\LB)$ does not contain $\{\ft \}$, we are in the first or third case. Set $C=\{\fq \in \text{Spc}(\CT^c) \setminus \{\ft\}: \{\fq\}\in T(\LA) \}$ and $D=\{\fq \in \text{Spc}(\CT^c) \setminus \{\ft\}: \{\fq, \ft\}\in T(\LA)\}$. If $C=D$, then we must be in the first case and the description of $C$ identifies uniquely $\LA=\LB=(C)$. If $C\subsetneq D$, then we are in the third case and $\LA=\LB=(\{\ft\}\cup C, D)$.
\end{proof}
\begin{remark}\label{rmk-dim1samethreadssufficient-sectionlowdim}
Be warned that we are not stating that the collection of thread sets uniquely identifies a composition of localizations: we argued that given the collections of thread sets $T(\LA)=T(\LB)$ we can identify a unique collapsed concatenated tuple $\LC$ such that $\LL_{\LA}\cong \LL_{\LC}\cong \LL_{\LB}$. However, we cannot exclude that there might exist tuples $\mathbb{D}, \mathbb{E}$ with different thread sets $T(\mathbb{D})\neq T(\mathbb{E})$ but whose associated iterated localizations are nevertheless isomorphic $\LL_{\mathbb{D}}\cong \LL_{\mathbb{E}}$.

A priori, it is not clear why the iterated localizations as in the above cases should be distinct if they are in different forms. Indeed, the fact that $\LL_{A}g(\fp)\neq 0$ if and only if $\fp\in A$ allows us to easily differentiate cases (1) and (2) for different subsets $C$. But this is not enough to distinguish the composition of case (3) from single localizations $\LL_A$, or two of such compositions associated with different subsets $C, D$.

Even in this low dimensional case we cannot prove the inverse of Conjecture~\ref{conj} in general.

However, in concrete examples where we can carry out computations, it should be possible to prove that once reduced to the above forms the iterated localizations are distinct.

For example, consider $\CT=D(\mathbb{Z})$: we can differentiate the iterated localizations by applying them to various chain complexes and then computing the homology groups.
\end{remark}

\begin{theorem}\label{thm-conjtruedim1-sectionlowdim}
For a stratified tensor triangulated category $\CT$ with noetherian Balmer spectrum of dimension 1 Conjecture~\ref{conj} holds.
\end{theorem}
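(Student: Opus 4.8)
The plan is to deduce the theorem from the irreducible case, Proposition~\ref{prop-conjtruedim1irred-sectionlowdim}, via the fracture reduction of Lemma~\ref{lem-conjecturereducedthomasoncomplements-sectionlowdim}. First observe that we may assume $\text{Spc}(\CT^c)$ contains a prime of length $1$, since otherwise it has dimension $0$ and Proposition~\ref{prop-conjtruedim0-sectionlowdim} applies. In a Balmer spectrum of dimension $1$ every prime has length $0$ (i.e.\ is minimal) or length $1$, and a length-$1$ prime is automatically maximal, as a prime strictly containing it would yield a chain of dimension $2$. Hence the length-$1$ primes form a subset of the maximal primes, which is finite by Corollary~\ref{cor-finmanyirrednoethsectionlowdim}; enumerate them as $\ft_1,\dots,\ft_r$. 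By Lemma~\ref{lemma-intersectionmaxprimesfinite-sectionlowdim}, for $i\neq j$ the intersection $\overline{\{\ft_i\}}\cap\overline{\{\ft_j\}}$ is finite, so the set $N$ of all minimal primes that lie strictly below two or more of the $\ft_j$ is finite.

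Next I would produce a finite cover of $\text{Spc}(\CT^c)$ by complements of Thomason subsets. Let $Z_0$ be the union of $N$ with the (finite) set of all maximal primes of $\text{Spc}(\CT^c)$; its complement consists entirely of minimal primes, hence is downward closed and therefore Thomason by Lemma~\ref{lem-thomason=downwardclosed-sectionbalmer}. For $1\leq j\leq r$ let $M_j$ be the set of minimal primes $\mathfrak{r}$ with $\mathfrak{r}\subsetneq\ft_j$ and $\mathfrak{r}\not\subseteq\ft_i$ for every $i\neq j$, and set $Z_j=\{\ft_j\}\cup M_j$. One checks directly that the complement of $Z_j$ is downward closed: if $\fq\subseteq\fp$ with $\fq\in Z_j$ but $\fp\notin Z_j$, then $\fq=\ft_j$ forces $\fp=\ft_j$ by maximality, while $\fq\in M_j$ forces $\fp=\fq$ by minimality, a contradiction either way. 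Thus each $Z_j$ is a complement of a Thomason subset, and $Z_j$ is an irreducible subspace with generic point $\ft_j$, of dimension at most $1$.

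The equality $\text{Spc}(\CT^c)=Z_0\cup Z_1\cup\dots\cup Z_r$ is then a short case check: a maximal prime lies in $Z_0$; a non-maximal prime $\fp$ is minimal and lies strictly below some $\ft_j$, so if it lies below two or more of the $\ft_j$ it is in $N\subseteq Z_0$, and otherwise it lies below a unique $\ft_j$ and so lies in $M_j\subseteq Z_j$. The pairwise (and higher) intersections of the covering sets are innocuous: $Z_0$ is finite, $Z_0\cap Z_j=\{\ft_j\}$, and $Z_i\cap Z_j=\emptyset$ for distinct $i,j$ in $\{1,\dots,r\}$ (the $\ft_i$ being distinct and $M_i\cap M_j=\emptyset$). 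Hence every nonempty intersection of the covering sets $Z_0,\dots,Z_r$ is either $Z_0$, or some $Z_j$, or a finite set, or empty. In each case $L_Z\CT$ is a stratified rigidly-compactly generated tensor triangulated category whose Balmer spectrum $Z$ is noetherian of finite Krull dimension, and Conjecture~\ref{conj} holds for it: trivially when $Z=\emptyset$; by Corollary~\ref{cor-conjturefinitecase-sectionfinitecase} when $Z$ is finite (in particular when $Z=Z_j$ with $M_j=\emptyset$); and by Proposition~\ref{prop-conjtruedim1irred-sectionlowdim} when $Z=Z_j$ is irreducible of dimension $1$. Lemma~\ref{lem-conjecturereducedthomasoncomplements-sectionlowdim} then yields Conjecture~\ref{conj} for $\CT$.

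The main difficulty is setting up the cover correctly: one must collect the finitely many minimal primes that are ``seen'' by more than one irreducible component into a single finite, specialization-complementary piece $Z_0$, so that what remains of each component is a genuinely irreducible complement of a Thomason subset. This is precisely where the finiteness statement of Lemma~\ref{lemma-intersectionmaxprimesfinite-sectionlowdim} is used. Once the cover is available, the argument is routine bookkeeping on top of the already established finite and irreducible cases.
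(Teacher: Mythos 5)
Your plan is correct and achieves the result, but it takes a genuinely different route from the paper. The paper proves the theorem by induction on the number of maximal primes, peeling off one irreducible component at a time: at the inductive step it covers the spectrum by $U=\text{Spc}(\CT^c)\setminus\overline{\{\fp_{n+1}\}}$, $W=\overline{\{\fp_{n+1}\}}\setminus\bigcup_{i\leq n}\overline{\{\fp_i\}}$, and the (finite) generization closure $V$ of the overlap, then applies the inductive hypothesis to $L_U\CT$, the irreducible/zero-dimensional case to $L_W\CT$, and the finite case to $V$ and all remaining intersections, concluding with Lemma~\ref{lem-conjecturereducedthomasoncomplements-sectionlowdim}. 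You instead separate \emph{all} irreducible components at once: the piece $Z_0$ absorbs the finitely many maximal primes together with the finitely many shared minimal primes, and each $Z_j$ is the exclusive part of the $j$-th component, which is irreducible of dimension at most one with pairwise intersections either empty or a singleton. Both arguments hinge on the same three ingredients — Lemma~\ref{lem-conjecturereducedthomasoncomplements-sectionlowdim}, Proposition~\ref{prop-conjtruedim1irred-sectionlowdim} together with the finite/zero-dimensional cases, and Lemma~\ref{lemma-intersectionmaxprimesfinite-sectionlowdim} to control overlaps — but your one-shot cover avoids the induction and makes the geometric picture more transparent, at the cost of a slightly more elaborate case check on the intersections of the cover.

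One small repair: your verification that $Z_j^{c}$ is downward closed misquotes the role of minimality. If $\fq\in M_j$ and $\fq\subsetneq\fp$, minimality of $\fq$ does \emph{not} force $\fp=\fq$; rather, $\fp$ must have length $1$ (since the spectrum has dimension $1$), hence $\fp=\ft_i$ for some $i$, and the defining condition $\fq\not\subseteq\ft_i$ for $i\neq j$ forces $\fp=\ft_j\in Z_j$, giving the desired contradiction. With this corrected, your construction of the cover and the application of Lemma~\ref{lem-conjecturereducedthomasoncomplements-sectionlowdim} go through exactly as you describe.
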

\begin{proof}
We prove the claim by induction on the number of maximal primes.


Proposition~\ref{prop-conjtruedim1irred-sectionlowdim} deals with the starting case. Now assume the claim to be true if the Balmer spectrum has at most $n$ maximal primes.

Let us consider a tensor triangulated category $\CT$ whose Balmer spectrum $\text{Spc}(\CT^c)$ has $(n+1)$-maximal primes, say $\fp_1,\dots, \fp_{n+1}$. We define
\[U=\text{Spc}(\CT^c)\setminus \overline{\{\fp_{n+1}\}}\qquad W=\overline{\{\fp_{n+1}\}}\setminus \bigcup_{i=1}^n\overline{\{\fp_i\}} \qquad V=[\supseteq U^c\cap W^c ].  \] 
These are three complements of  Thomason subsets. The first one coincides with the Balmer primes contained in one of the first $n$ maximal primes, but not in $\fp_{n+1}$. The second is given by the Balmer primes contained exclusively in the maximal prime $\fp_{n+1}$. Finally, unraveling the definition we can write
\[ V=\{\fq\in \text{Spc}(\CT^c) : \exists i\leq n \ \exists \fp \in \overline{\{\fp_i\}}\cap \overline{\{\fp_{n+1}\}} \quad \fq\supseteq \fp \}.\]
This set consists of the generalization closure of the primes belonging to the intersection of $\overline{\{\fp_{n+1}\}}$ and $\overline{\{\fp_{i}\}}$ for some $i\leq n$.

For $L_U\CT$ the Conjecture is verified by inductive assumption, for $L_W\CT$ by Proposition~\ref{prop-conjtruedim0-sectionlowdim} or Proposition~\ref{prop-conjtruedim1irred-sectionlowdim} depending on the length of $\fp_{n+1}$. We observe that $V$ is finite by Lemma~\ref{lemma-intersectionmaxprimesfinite-sectionlowdim}. Hence, if $Z$ is one among $V, V\cap U, V\cap W, V\cap U\cap W$, the Conjecture holds for $L_{Z}\CT$ by Corollary~\ref{cor-conjturefinitecase-sectionfinitecase}. 

Therefore, Lemma~\ref{lem-conjecturereducedthomasoncomplements-sectionlowdim} allows us to conclude.
\end{proof}

\begin{example}
Recall Example~\ref{ex-DRnoethstratified-sectionbalmer}. Let $k$ be an algebraically closed field. We set $R=k[x,y]/(xy)$: this is a noetherian commutative ring of Krull dimension 1. The underlying set of its Zariski spectrum is given by the prime ideals $(x), (y)$ and $(x-\lambda, y-\mu)$ for arbitrary scalars $\lambda, \mu \in k$. We denote by these names the corresponding Balmer ideals in the Balmer spectrum $\text{Spc}(D(R)^c)$. Consequently, we have the proper inclusions $(x,y-\mu)\subset (x)$, $(x-\lambda, y)\subset (y)$ and the ideal $(x,y)$ is the only one contained in both $(x)$ and $(y)$.

We now define $A=\{(x-\lambda,y) : \lambda \in k\}$ and $B=\text{Spc}(D(R)^c)\setminus \{(x),(y)\}$, then consider $\LL_{\{(x), (y)\}}\LL_{\{(x)\}\cup A}\LL_{B}$. In this case the possible threads are
\[\begin{array}{ccccc}
(x)&=&(x) &\supset &  (x,y-\mu)\\
(x)&\supset &(x,y) &=&  (x,y) \\
(y)&\supset &(x-\lambda,y)   &=& (x-\lambda, y).
\end{array}\]
%
We can easily verify that the associated thread sets are the same for $\LL_{\{(x), (y)\}}\LL_{B}$. Therefore, Theorem~\ref{thm-conjtruedim1-sectionlowdim} implies we have an isomorphism 
\[\LL_{\{(x), (y)\}}\LL_{\{(x)\}\cup A}\LL_{B}\cong \LL_{\{(x), (y)\}}\LL_{B}.\]
Actually, it is not too difficult to prove this isomorphism directly. Indeed, since $B$ is a discrete subspace and it can be split as $B=A^*\cup C\cup \{(x,y)\}$ with $A^*=A\setminus \{(x,y)\}$ and $C=B\setminus A$, we have a decomposition $\Gamma_B\mbu\cong \Gamma_{A^*}\mbu \amalg \Gamma_C\mbu\amalg \Gamma_{(x,y)}\mbu$. Hence, the corresponding equality $\LL_B=\Lambda_B\cong \Lambda_{A^*}\times \Lambda_{C}\times \Lambda_{(x,y)}$ holds. It follows that 
\[ \LL_{\{(x)\}\cup A}\LL_B\cong \LL_{\{(x)\}\cup A}(\LL_{A^*}\times \LL_{C}\times \LL_{(x,y)}  )\cong \LL_{A^*}\times \LL_{(x)}\LL_C \times \LL_{(x,y)}.\]
Using this we can compute
\begin{align*}
\LL_{\{(x), (y)\}}\LL_{\{(x)\}\cup A}\LL_{B}&\cong \LL_{\{(x), (y)\}}(\LL_{A^*}\times \LL_{(x)}\LL_C\times \LL_{(x,y)})\\
&\cong \LL_{(y)}\LL_{A^*}\times \LL_{(x)}\LL_{C}\times \LL_{\{(x), (y)\}}\LL_{(x,y)}  \cong \LL_{\{(x), (y)\}}\LL_B.
\end{align*}
\end{example}
We now prove the isomorphism of Conjecture~\ref{conj} for a special case where we consider a Balmer spectrum of dimension 2.

\begin{lemma}\label{lem-specialcesesconjdim2-sectionlowdim}
Let $\CT$ be a stratified tensor triangulated category with noetherian Balmer spectrum of dimension 2. Suppose there is a unique maximal prime $\ft$ and a unique minimal prime $\fm$. Let $A,B\subseteq \emph{Spc}(\CT^c)$ be two arbitrary collections of Balmer primes of length 1. Then, there are canonical natural isomorphisms as follows
\begin{align*}
\LL_{ \{\ft\} \cup A\cup\{\fm\}  }\LL_{\{\ft\} \cup B\cup\{\fm\}  } &\cong \LL_{\{\ft\} \cup (A\cap B)\cup\{\fm\}  }, \\
\LL_{ \{\ft\} \cup A}\LL_{\{\ft\} \cup B\cup\{\fm\}  } &\cong \LL_{ \{\ft\} \cup A}\LL_{\{\ft\} \cup (A\cap B)\cup\{\fm\}  }, \\
\LL_{ \{\ft\} \cup A \cup \{\fm\}}\LL_{B\cup\{\fm\} } &\cong \LL_{ \{\ft\} \cup (A\cap B)\cup \{\fm\}}\LL_{B\cup\{\fm\}  }.
\end{align*}
\end{lemma}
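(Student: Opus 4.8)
The plan is to reduce each of the three claimed isomorphisms to the first-reduction results of Section~\ref{section-firstreductions} combined with the fracture cube of Lemma~\ref{lem-fracturecube-sectionreduction}, exploiting the special shape of a dimension-2 spectrum with a unique maximal prime $\ft$ and unique minimal prime $\fm$. In this situation the spectrum decomposes as $\{\fm\}\sqcup M\sqcup\{\ft\}$ where $M$ is the set of length-$1$ primes, and every length-$1$ prime $\fp$ satisfies $\fm\subsetneq\fp\subsetneq\ft$. So a subset of the form $\{\ft\}\cup A\cup\{\fm\}$ is automatically the complement of a Thomason subset precisely when it is downward closed, which happens as soon as it contains $\fm$ and $\ft$; hence $\LL_{\{\ft\}\cup A\cup\{\fm\}}$ is a \emph{smashing} localization $L_{Z}$ with $Z=\{\ft\}\cup A\cup\{\fm\}$. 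This is the observation that makes the first isomorphism essentially formal.

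For the first isomorphism I would argue as follows. Both $Z_1=\{\ft\}\cup A\cup\{\fm\}$ and $Z_2=\{\ft\}\cup B\cup\{\fm\}$ are complements of Thomason subsets, so $\LL_{Z_1}=L_{Z_1}$ and $\LL_{Z_2}=L_{Z_2}$ are finite smashing localizations (using that the spectrum is noetherian, hence generically noetherian, so Theorem~9.11 of \cite{BHS-stratification} applies via the Example following Corollary~\ref{cor-facturenoetherianbalmerspectrum-sectionstratification}). Then $L_{Z_1}L_{Z_2}\cong L_{Z_1\cap Z_2}$, and $Z_1\cap Z_2=\{\ft\}\cup(A\cap B)\cup\{\fm\}$, which is exactly the claim. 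This is the easy case.

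For the second and third isomorphisms the right-hand localizations are still smashing (they contain both $\ft$ and $\fm$), but the outer one on the left is not of that form. Here I would use Proposition~\ref{prop-LALB=LAcapgeqBLB-sectionreduction} (replacing $A$ by $A\cap[\supseteq B]$) together with Proposition~\ref{prop-LALB=LALBcupnotleqA-sectionreduction} / Corollary~\ref{cor-LambdaALambdaB=LambdaALambdaBcapleqA-rationaltorus} to trim the inner subset. Concretely, for the second isomorphism write $C=\{\ft\}\cup B\cup\{\fm\}$. Since $\fm$ lies below every prime, we have $[\subseteq(\{\ft\}\cup A)]\supseteq\{\fm\}$ and $[\subseteq(\{\ft\}\cup A)]\cap M = A\cup(\text{primes below }\ft\text{ not below anything in }A)$; but every length-$1$ prime is below $\ft\in\{\ft\}\cup A$, so $[\subseteq(\{\ft\}\cup A)]$ in fact contains all of $M$, which is too coarse. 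The correct move is instead Corollary~\ref{cor-LambdaALambdaB=LambdaALambdaBcapleqA-rationaltorus} applied to cut $C$ down to $C\cap[\subseteq(\{\ft\}\cup A)]$ — and one checks $C\cap[\subseteq(\{\ft\}\cup A)] = \{\ft\}\cup(B\cap M)\cup\{\fm\}$ since the only constraint that survives is that length-$1$ primes in $B$ must already lie below $\ft$ (automatic) — so this alone does nothing. Therefore the real content must come from a fracture-cube argument: decompose $\LL_{C}$ using the smashing decomposition $\Gamma_C\mbu\cong\Gamma_{\{\ft\}}\mbu\amalg\Gamma_{M\cap C\cup\{\fm\}}\mbu$ (legitimate because $\{\ft\}$ is open and disjoint from $M\cup\{\fm\}$ as $\ft$ is isolated at the top), giving $\LL_{C}\cong\LL_{\{\ft\}}\times\LL_{(B\cap M)\cup\{\fm\}}$, and then post-compose with $\LL_{\{\ft\}\cup A}$; the factor $\LL_{\{\ft\}\cup A}\LL_{\{\ft\}}$ is unaffected by whether we use $B$ or $A\cap B$, while on the remaining factor one uses Proposition~\ref{prop-LALB=LAcapgeqBLB-sectionreduction} to see that primes of $B\cap M$ not below any prime of $\{\ft\}\cup A$ other than $\ft$ itself get trimmed — and the primes surviving are exactly those in $A\cap B$, because a length-$1$ prime $\fp\in B\cap M$ contributes to $\LL_{\{\ft\}\cup A}\LL_{\dots}$ nontrivially only if some prime of $\{\ft\}\cup A\cup\{\fm\}$ strictly above $\fp$ appears, i.e.\ only if $\fp\in A$ (the prime $\ft$ is handled by the separate factor) — so the restriction from $B\cap M$ to $A\cap B$ on that factor is exactly Proposition~\ref{prop-LALB=LAcapgeqBLB-sectionreduction}. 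The third isomorphism is symmetric, swapping the roles of inner and outer, and there one uses Proposition~\ref{prop-LALB=LAcapgeqBLB-sectionreduction} directly on the outer subset $\{\ft\}\cup A\cup\{\fm\}$ against the inner $B\cup\{\fm\}$.

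The main obstacle I anticipate is getting the bookkeeping of the family/cofamily operators $[\supseteq-]$ and $[\subseteq-]$ exactly right in the presence of the distinguished primes $\ft$ and $\fm$: because $\ft$ sits above everything and $\fm$ below everything, naive application of Propositions~\ref{prop-LALB=LAcapgeqBLB-sectionreduction} and \ref{prop-LALB=LALBcupnotleqA-sectionreduction} tends to be vacuous, and the genuine trimming from $B$ to $A\cap B$ only emerges after one splits off the $\{\ft\}$-factor (and, for $\fm$, observes it is forced into every thread set and hence into every reduced tuple). Making that splitting rigorous — i.e.\ justifying the smashing product decomposition $\LL_{C}\cong\LL_{\{\ft\}}\times\LL_{(B\cap M)\cup\{\fm\}}$ from the isolation of $\ft$ and then tracking the canonicity of the resulting isomorphisms so they are compatible in the sense of Remark~\ref{rmk-isoconjcanonical-sectionbasics} — is where the care is needed; the rest is the formal calculus of Section~\ref{section-firstreductions}.
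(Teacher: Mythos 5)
There is a fatal error in the ``easy case'' that undermines the whole proposal. You claim that $Z=\{\ft\}\cup A\cup\{\fm\}$ is the complement of a Thomason subset whenever it contains $\ft$ and $\fm$, so that $\LL_{Z}$ is a smashing localization. This is false: by Lemma~\ref{lem-thomason=downwardclosed-sectionbalmer} a Thomason subset is \emph{downward} closed (closed under specialization), so its complement is \emph{upward} closed. Since $\fm$ is the unique minimal prime, any upward closed set containing $\fm$ must contain every prime $\fq\supseteq\fm$, i.e.\ the whole spectrum. Hence $\{\ft\}\cup A\cup\{\fm\}$ is a complement of Thomason only in the degenerate case $A=M$. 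For generic $A$ the localization $\LL_{\{\ft\}\cup A\cup\{\fm\}}$ is \emph{not} smashing and the identity $L_{Z_1}L_{Z_2}\cong L_{Z_1\cap Z_2}$ does not apply; indeed the first isomorphism of the Lemma is genuinely nontrivial precisely because these localizations are not smashing. (The same confusion appears if you read ``downward closed'' as written: a downward closed set containing $\ft$ must contain all $\fq\subseteq\ft$, which is again the whole spectrum.)

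The proposed product decomposition $\LL_{C}\cong\LL_{\{\ft\}}\times\LL_{(B\cap M)\cup\{\fm\}}$ for the second and third isomorphisms is also incorrect, for a related reason. The prime $\ft$ is the unique maximal (i.e.\ generic) point, so the spectrum is irreducible; $\{\ft\}$ is open but dense, not ``isolated''. Consequently there is no idempotent decomposition $\Gamma_C\mbu\cong\Gamma_{\{\ft\}}\mbu\amalg\Gamma_{(B\cap M)\cup\{\fm\}}\mbu$, and the right tool is not a direct product but the homotopy pullback (fracture) square from Lemma~\ref{lem-fracturecube-sectionreduction}. This is exactly what the paper's proof does: for each of the three claims it chooses a decomposition of the inner (or outer) index set into two pieces $C$ and $D$ with $\fp\not\supseteq\fq$ for $\fp\in D$, $\fq\in C$, forms the resulting fracture square, applies the remaining outer localization (which preserves homotopy pullbacks), simplifies each vertex using the reduction $\LL_{\LA}\cong\LL_{\gamma\delta(\LA)}$ of Proposition~\ref{prop-LA=LdeltaA-sectionreduction}, and then recognizes the punctured square as the one computing the target localization. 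Your instinct to invoke a fracture-cube argument is correct, but you need a pullback along the punctured square, not a product splitting, and you cannot start from the ``smashing'' shortcut, which is unavailable here.
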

\begin{proof}
We start from the first isomorphism. We divide the set $\{\ft\} \cup B\cup\{\fm\}$ as follows: define $C=\{\ft\}\cup B\setminus A$ and $D=(A\cap B)  \cup \{\fm\}$. Clearly $\forall \fp \in D, \forall\fq \in C$ we have $\fp\not\supseteq \fq$, therefore we can apply Lemma~\ref{lem-fracturecube-sectionreduction} to obtain a homotopy pullback square as follows
\begin{center}
\begin{tikzcd}
\LL_{\{\ft\} \cup B\cup\{\fm\}}X \arrow[r] \arrow[d] & \LL_{C}X \arrow[d] \\
\LL_{D}X \arrow[r] & \LL_{C}\LL_{D}X. \arrow[ul, phantom, "\ulcorner" very near end]
\end{tikzcd}
\end{center} 
If we apply $\LL_{ \{\ft\} \cup A\cup\{\fm\}}$ to the above diagram we still obtain an homotopy pullback. We observe $\gamma\delta( \{\ft\} \cup A\cup\{\fm\},C)=(\{\ft\})$ and $\gamma\delta( \{\ft\} \cup A\cup\{\fm\}, D)=(D)$, therefore by Proposition~\ref{prop-LA=LdeltaA-sectionreduction} the resulting square is in the form
\begin{center}
\begin{tikzcd}
\LL_{ \{\ft\} \cup A\cup\{\fm\} }\LL_{\{\ft\} \cup B\cup\{\fm\}}X \arrow[r] \arrow[d] & \LL_{\ft}X \arrow[d] \\
\LL_{(A\cap B)\cup\{\fm\}}X \arrow[r] & \LL_{\ft}\LL_{D}X. \arrow[ul, phantom, "\ulcorner" near end]
\end{tikzcd}
\end{center} 
We notice the punctured square is the same we would obtain for $\{\ft\}\cup (A\cap B)\cup \{\fm\}$ invoking Lemma~\ref{lem-fracturecube-sectionreduction} and using the decomposition in the two subsets $\{\ft\}$ and $(A\cap B)\cup\{\fm\}$. This proves the claimed isomorphism.

Now on to the second. As before, let us consider again the fracture square
\begin{center}
\begin{tikzcd}
\LL_{\{\ft\} \cup B\cup\{\fm\}}X \arrow[r] \arrow[d] & \LL_{C}X \arrow[d] \\
\LL_{D}X \arrow[r] & \LL_{C}\LL_{D}X \arrow[ul, phantom, "\ulcorner" very near end]
\end{tikzcd}
\end{center}
but now we apply $\LL_{\{\ft\}\cup A}$ to this. Using the reduction of Proposition~\ref{prop-LA=LdeltaA-sectionreduction}, we obtain the homotopy pullback
\begin{center}
\begin{tikzcd}
\LL_{\{\ft\}\cup A}\LL_{\{\ft\} \cup B\cup\{\fm\}}X \arrow[r] \arrow[d] & \LL_{\ft}X \arrow[d] \\
\LL_{\{\ft\}\cup A}\LL_{D}X \arrow[r] & \LL_{\ft}\LL_{D}X. \arrow[ul, phantom, "\ulcorner"  near end]
\end{tikzcd}
\end{center} 
As we mentioned above, the decomposition of $\{\ft\}\cup(A\cap B)\cup \{\fm\}$ in the two sets $\{\ft\}$ and $D$ gives the homotopy pullback
 \begin{center}
\begin{tikzcd}
\LL_{\{\ft\} \cup (A\cap B)\cup\{\fm\}}X \arrow[r] \arrow[d] & \LL_{\ft}X \arrow[d] \\
\LL_{D}X \arrow[r] & \LL_{\ft}\LL_{D}X \arrow[ul, phantom, "\ulcorner" very near end]
\end{tikzcd}
\end{center}
and applying $\LL_{\{\ft\}\cup A}$ to it we obtain the same span in the lower right corner as in the square before this. Therefore, since  both compositions $\LL_{ \{\ft\} \cup A}\LL_{\{\ft\} \cup B\cup\{\fm\}  }$ and $\LL_{ \{\ft\} \cup A}\LL_{\{\ft\} \cup (A\cap B)\cup\{\fm\}  }$ can be expressed as the homotopy limit of the same diagram they must coincide.

We finally conclude with the third isomorphism. This time we use the decomposition $\{\ft\}\cup A\cup \{\fm\}$ as the union of $\{\ft\}\cup (A\cap B)$ and $(A\setminus B)\cup \{\fm\}$ to get the homotopy pullback square
\begin{center}
\begin{tikzcd}
\LL_{\{\ft\} \cup A\cup\{\fm\}}X \arrow[r] \arrow[d] & \LL_{(A\setminus B)\cup\{\fm\}}X \arrow[d] \\
\LL_{\{\ft\}\cup (A\cap B)}X \arrow[r] & \LL_{\{\ft\}\cup (A\cap B)}\LL_{(A\setminus B)\cup\{\fm\}}X. \arrow[ul, phantom, "\ulcorner" very near end]
\end{tikzcd}
\end{center}
If we consider this for $X=\LL_{B\cup \{\fm\}}X'$ and use Proposition~\ref{prop-LA=LdeltaA-sectionreduction} we obtain the square  
\begin{center}
\begin{tikzcd}
\LL_{\{\ft\} \cup A\cup\{\fm\}}\LL_{B\cup \{\fm\}}X' \arrow[r] \arrow[d] & \LL_{\fm}X' \arrow[d] \\
\LL_{\{\ft\}\cup (A\cap B)}\LL_{B\cup \{\fm\}}X' \arrow[r] & \LL_{\{\ft\}\cup (A\cap B)}\LL_{\fm}X'. \arrow[ul, phantom, "\ulcorner" very near end]
\end{tikzcd}
\end{center}
But the decomposition of $\{\ft\}\cup (A\cap B)\cup \{\fm\}$ in $\{\ft\}\cup (A\cap B)$ and $\{\fm\}$ induces the homotopy pullback
\begin{center}
\begin{tikzcd}
\LL_{\{\ft\} \cup (A\cap B)\cup\{\fm\}}X \arrow[r] \arrow[d] & \LL_{\fm}X \arrow[d] \\
\LL_{\{\ft\}\cup (A\cap B)}X \arrow[r] & \LL_{\{\ft\}\cup (A\cap B)}\LL_{\fm}X. \arrow[ul, phantom, "\ulcorner" very near end]
\end{tikzcd}
\end{center}
If we consider this for  $X=\LL_{B\cup \{\fm\}}X'$ we obtain the same punctured square as in the diagram before of this, expressing $\LL_{\{\ft\} \cup A\cup\{\fm\}}\LL_{B\cup \{\fm\}}X'$ as homotopy pullback. Therefore, we conclude there is an isomorphism $\LL_{\{\ft\} \cup A\cup\{\fm\}}\LL_{B\cup \{\fm\}}X'\cong \LL_{\{\ft\} \cup (A\cap B)\cup\{\fm\}}\LL_{B\cup \{\fm\}}X'$.
\end{proof}

\begin{remark}
Observe that we have the corresponding equalities of collections of thread sets 
\begin{align*}
T(\{\ft\} \cup A\cup\{\fm\} , \{\ft\} \cup B\cup\{\fm\} )&=T(\{\ft\} \cup (A\cap B)\cup\{\fm\} ),\\
 T(\{\ft\}\cup A, \{\ft\}\cup B\cup \{\fm\}  )&=T(\{\ft\}\cup A, \{\ft\}\cup (A\cap B)\cup \{\fm\} ),\\
T(\{\ft\}\cup A\cup \{\fm\}, B\cup \{\fm\}  )&=T(\{\ft\}\cup (A\cap B)\cup \{\fm\}, B\cup \{\fm\} ).
\end{align*}
Thus, Lemma~\ref{lem-specialcesesconjdim2-sectionlowdim} presents particular instances of the isomorphism of Conjecture~\ref{conj}.
\end{remark}

\begin{corollary}\label{cor-LtALtAcapBmLBm=LtALBm-sectionlowdim}
Let $\CT$ be a stratified tensor triangulated category with noetherian Balmer spectrum of dimension 2. Suppose there is a unique maximal prime $\ft$ and a unique minimal prime $\fm$. Let $A,B\subseteq \emph{Spc}(\CT^c)$ be two arbitrary collections of Balmer primes of length 1. Then, there is a canonical natural isomorphism \[ \LL_{\{\ft\}\cup A}\LL_{\{\ft\}\cup (A\cap B)\cup \{\fm\}}\LL_{B\cup \{\fm\}}\cong  \LL_{\{\ft\}\cup A}\LL_{B\cup \{\fm\}}. \]
\end{corollary}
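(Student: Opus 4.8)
The plan is to derive this directly from Lemma~\ref{lem-specialcesesconjdim2-sectionlowdim} together with the elementary reduction already contained in the proof of Proposition~\ref{prop-LA=LdeltaA-sectionreduction}; no genuinely new argument is needed.

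First I would invoke the third isomorphism of Lemma~\ref{lem-specialcesesconjdim2-sectionlowdim}, which provides a canonical natural isomorphism
\[ \LL_{\{\ft\}\cup (A\cap B)\cup \{\fm\}}\LL_{B\cup \{\fm\}}\cong \LL_{\{\ft\}\cup A\cup \{\fm\}}\LL_{B\cup \{\fm\}}. \]
Applying $\LL_{\{\ft\}\cup A}$ on the left of both functors then gives
\[ \LL_{\{\ft\}\cup A}\LL_{\{\ft\}\cup (A\cap B)\cup \{\fm\}}\LL_{B\cup \{\fm\}}\cong \LL_{\{\ft\}\cup A}\LL_{\{\ft\}\cup A\cup \{\fm\}}\LL_{B\cup \{\fm\}}. \]

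Next, since $\{\ft\}\cup A\subseteq \{\ft\}\cup A\cup \{\fm\}$, the operation $\gamma$ appearing in the proof of Proposition~\ref{prop-LA=LdeltaA-sectionreduction} yields the canonical isomorphism $\LL_{\{\ft\}\cup A}\LL_{\{\ft\}\cup A\cup \{\fm\}}\cong \LL_{\{\ft\}\cup A}$ (when two adjacent localizations are nested, the one for the smaller support class, applied last, absorbs the larger one). Composing this on the right with $\LL_{B\cup \{\fm\}}$ and substituting into the previous display gives
\[ \LL_{\{\ft\}\cup A}\LL_{\{\ft\}\cup (A\cap B)\cup \{\fm\}}\LL_{B\cup \{\fm\}}\cong \LL_{\{\ft\}\cup A}\LL_{B\cup \{\fm\}}, \]
as claimed.

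The only subtlety is that the resulting isomorphism should be canonical in the sense of Remark~\ref{rmk-isoconjcanonical-sectionbasics}, but this is automatic: the isomorphism in Lemma~\ref{lem-specialcesesconjdim2-sectionlowdim} is built from homotopy pullback squares and the $\gamma$-reduction of Proposition~\ref{prop-LA=LdeltaA-sectionreduction} comes from a universal property, so all the relevant comparison triangles commute. An equally short alternative is to use instead the second isomorphism of Lemma~\ref{lem-specialcesesconjdim2-sectionlowdim} to replace $\LL_{\{\ft\}\cup A}\LL_{\{\ft\}\cup (A\cap B)\cup \{\fm\}}$ by $\LL_{\{\ft\}\cup A}\LL_{\{\ft\}\cup B\cup \{\fm\}}$ and then absorb $\LL_{\{\ft\}\cup B\cup \{\fm\}}\LL_{B\cup \{\fm\}}\cong \LL_{B\cup \{\fm\}}$; I do not foresee any real obstacle on either route.
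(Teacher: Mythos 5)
Your proof is correct, and it takes a different (and somewhat cleaner) route than the paper's. The paper's proof invokes Lemma~\ref{lem-specialcesesconjdim2-sectionlowdim} twice, progressively enlarging the middle index set from $\{\ft\}\cup(A\cap B)\cup\{\fm\}$ to $\{\ft\}\cup S\cup\{\fm\}$ with $S=\text{Spc}(\CT^c)\setminus\{\ft,\fm\}$: first the second isomorphism is used (with $B$ replaced by $(A\cap B)\cup(S\setminus A)$) and then the third (with $A$ replaced by $(A\cap B)\cup(S\setminus A)\cup(S\setminus B)$); the middle localization then becomes $\LL_{\text{Spc}(\CT^c)}=\mathrm{id}$ and drops out. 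You instead apply the Lemma only once — the third isomorphism to replace the middle index set by $\{\ft\}\cup A\cup\{\fm\}$ — and then absorb $\LL_{\{\ft\}\cup A\cup\{\fm\}}$ into $\LL_{\{\ft\}\cup A}$ by the $\gamma$-reduction $\LL_C\LL_D\cong\LL_C$ for $C\subseteq D$ from the proof of Proposition~\ref{prop-LA=LdeltaA-sectionreduction} (your alternative using the second isomorphism and absorbing $\LL_{\{\ft\}\cup B\cup\{\fm\}}\LL_{B\cup\{\fm\}}\cong\LL_{B\cup\{\fm\}}$ works just as well). The net effect is the same, but your derivation is more economical in its use of the Lemma; the paper's version instead keeps the argument entirely inside the Lemma's three isomorphisms and only appeals to the triviality of $\LL_{\text{Spc}(\CT^c)}$ at the end. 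Both correctly yield a canonical isomorphism, since each building block comes from homotopy pullback squares or the universal property underlying the $\gamma$-reduction.
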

\begin{proof}
We denote by $S$ the set $\text{Spc}(\CT^c)\setminus \{\fm, \ft\}$. Then, concatenating the isomorphisms of Lemma~\ref{lem-specialcesesconjdim2-sectionlowdim} we obtain
\begin{align*}
\LL_{\{\ft\}\cup A}\LL_{\{\ft\}\cup (A\cap B)\cup \{\fm\}}\LL_{B\cup \{\fm\}}&\cong \LL_{\{\ft\}\cup A}\LL_{\{\ft\}\cup (A\cap B)\cup (S\setminus A)\cup \{\fm\}}\LL_{B\cup \{\fm\}}\\
&\cong \LL_{\{\ft\}\cup A}\LL_{\{\ft\}\cup (A\cap B)\cup(S\setminus A)\cup (S\setminus B)\cup \{\fm\}}\LL_{B\cup \{\fm\}}\\
&\cong \LL_{\{\ft\}\cup A}\LL_{B\cup \{\fm\}}
\end{align*}
where the last isomorphism comes from $(A\cap B)\cup (S\setminus A)\cup (S\setminus B)=S$.
\end{proof}

\begin{proposition}\label{prop-conjtruedim2irredonemin-sectionlowdim}
Let $\CT$ be a stratified tensor triangulated category with noetherian Balmer spectrum of dimension 2. Suppose there is a unique maximal prime $\ft$ and a unique minimal prime $\fm$. Then, Conjecture~\ref{conj} holds for $\CT$.
\end{proposition}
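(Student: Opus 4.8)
The plan is to follow the strategy of Proposition~\ref{prop-conjtruedim1irred-sectionlowdim}, the extra difficulty being that the ``middle'' of the spectrum is now a nontrivial antichain. Write $\text{Spc}(\CT^c)=\{\ft\}\cup M\cup\{\fm\}$, where $M$ is the (possibly infinite) antichain of Balmer primes of length $1$; as the dimension is $2$ we have $M\neq\emptyset$. The families and cofamilies here are very restricted: $[\leq\{\ft\}]$ and $[\geq\{\fm\}]$ are the whole spectrum, $[\leq\{\fq\}]=\{\fq,\fm\}$ and $[\geq\{\fq\}]=\{\fq,\ft\}$ for $\fq\in M$, and $[\leq S]=S\cup\{\fm\}$, $[\geq S]=S\cup\{\ft\}$ for $\emptyset\neq S\subseteq M$. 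First I would use Proposition~\ref{prop-LA=LdeltaA-sectionreduction} to replace $\LA$ and $\LB$ by the collapsed concatenated tuples $\gamma\delta(\LA)$ and $\gamma\delta(\LB)$; since this changes neither $\LL_{(-)}$ nor $T(-)$ it suffices to treat collapsed concatenated tuples. Feeding the shapes of $[\leq S],[\geq S]$ into the concatenation and collapse conditions, a short (if fiddly) combinatorial argument shows that a collapsed concatenated tuple $\LA=(A_1,\dots,A_k)$ must have the following form, reading left to right: at most one initial entry $\{\ft\}\cup P$ with $P\subseteq M$; then a (possibly empty) block of consecutive entries $\{\ft\}\cup M_i\cup\{\fm\}$ with $M_i\subseteq M$ and consecutive $M_i$ pairwise incomparable, or else a single entry $M'\subseteq M$ with $\ft,\fm\notin M'$ (these are mutually exclusive, because the concatenation conditions forbid a tuple from containing both an entry meeting $\{\ft,\fm\}$ in both points and one meeting it in neither); and at most one terminal entry $Q\cup\{\fm\}$ with $Q\subseteq M$. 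In particular the $k=1$ tuples are exactly the single subsets $A$, for which $T(A)=T(B)$ already forces $A=B$.

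Next I would collapse the block. Repeated application of the first isomorphism of Lemma~\ref{lem-specialcesesconjdim2-sectionlowdim} merges a run $\LL_{\{\ft\}\cup M_i\cup\{\fm\}}\cdots\LL_{\{\ft\}\cup M_j\cup\{\fm\}}$ into $\LL_{\{\ft\}\cup(M_i\cap\cdots\cap M_j)\cup\{\fm\}}$, and by the Remark following that Lemma each step preserves $T(-)$. Hence every collapsed concatenated $\LA$ is carried, without changing $\LL_\LA$ up to canonical isomorphism nor $T(\LA)$, to a tuple of length at most $3$ whose unique middle entry is either $\{\ft\}\cup N\cup\{\fm\}$ (with $N=\bigcap_i M_i$) or a single $M'\subseteq M$ with $\ft,\fm\notin M'$. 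The flanking entries $\{\ft\}\cup P$ and $Q\cup\{\fm\}$ are respectively the complement of a Thomason set and a Thomason set (Lemma~\ref{lem-thomason=downwardclosed-sectionbalmer}), so their localizations are the smashing $L_{\{\ft\}\cup P}$ and $\Lambda_{Q\cup\{\fm\}}$. If the middle entry is $\{\ft\}\cup N\cup\{\fm\}$ I would reduce $N$ against its neighbours: the second isomorphism of Lemma~\ref{lem-specialcesesconjdim2-sectionlowdim} replaces $N$ by $P\cap N$ when $L_{\{\ft\}\cup P}$ precedes, the third replaces $P\cap N$ by $P\cap N\cap Q$ when $\Lambda_{Q\cup\{\fm\}}$ follows, and, should the reduced middle primes equal $P\cap Q$, Corollary~\ref{cor-LtALtAcapBmLBm-sectionlowdim} deletes the middle entry altogether. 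This produces for each $\LA$ an explicit \emph{normal form} for $\LL_\LA$ — one of $\LL_A$ ($k=1$), $L_{\{\ft\}\cup P}\Lambda_{Q\cup\{\fm\}}$, $L_{\{\ft\}\cup P}\LL_{\{\ft\}\cup R\cup\{\fm\}}\Lambda_{Q\cup\{\fm\}}$ with $R\subsetneq P\cap Q$, $L_{\{\ft\}\cup P}\LL_{M'}\Lambda_{Q\cup\{\fm\}}$ with $\ft,\fm\notin M'$, and their one-sided truncations — depending only on the sets $P$, $Q$, the reduced middle primes $R$, and whether a ``neither'' entry $M'$ occurs.

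Finally I would match thread sets to normal forms. Computing $T(\LA)$ for each normal form directly — a finite check, since every chain in $\text{Spc}(\CT^c)$ has at most three elements — one finds that $T(\LA)$ records exactly the defining data: the singleton chains recover the reduced middle primes $R$ (resp.\ $M'\cap P\cap Q$); the two-element chains $\{\fm,\fq\}$ and $\{\fq,\ft\}$ with $\fq\in M$ recover $P$ and $Q$ (and, in the ``neither'' case, the full chains $\{\fm,\fq,\ft\}$ recover all of $M'$); and whether $\{\fm,\ft\}$ lies in $T(\LA)$ detects whether a ``neither'' entry is present, so that distinct types of normal form never share a thread-set collection. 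Hence $T(\LA)=T(\LB)$ forces the two normal forms to coincide, giving the asserted isomorphism $\LL_\LA\cong\LL_\LB$; its canonicity in the sense of Remark~\ref{rmk-isoconjcanonical-sectionbasics} is automatic, since every reduction used is built from universal properties of localizations and from homotopy limits.

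The main obstacle is not any individual isomorphism — those are immediate from Lemma~\ref{lem-specialcesesconjdim2-sectionlowdim} and Corollary~\ref{cor-LtALtAcapBmLBm-sectionlowdim} — but rather the structural classification of collapsed concatenated tuples in this poset and the patient, case-by-case bookkeeping verifying that the thread-set data pins down the normal form, including the degenerate short-length configurations and the ``neither-entry'' configuration.
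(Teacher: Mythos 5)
Your proposal matches the paper's proof in both strategy and the auxiliary results invoked: reduce to collapsed concatenated tuples via $\gamma\delta$ (Proposition~\ref{prop-LA=LdeltaA-sectionreduction}), merge and prune the middle using the three isomorphisms of Lemma~\ref{lem-specialcesesconjdim2-sectionlowdim} together with Corollary~\ref{cor-LtALtAcapBmLBm=LtALBm-sectionlowdim} to arrive at a finite list of normal forms (your list, including one-sided truncations, coincides with the paper's eleven cases), and then check that the thread-set collection pins down the normal form. Your presentation is somewhat more structural in spelling out the $\ft$/$\fm$-membership pattern that a collapsed concatenated tuple must exhibit, but this is just making explicit what the paper leaves implicit, and the remaining thread-set bookkeeping you defer to is exactly the case-by-case analysis carried out in the paper's proof.
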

\begin{proof}
Using Proposition~\ref{prop-LA=LdeltaA-sectionreduction}, Lemma~\ref{lem-specialcesesconjdim2-sectionlowdim} and Corollary~\ref{cor-LtALtAcapBmLBm=LtALBm-sectionlowdim}, we can reduce all the possible compositions of localizations to one of the following forms, where $A^1, B^1, C^1$ denote collections of Balmer primes of length $1$.
\begin{enumerate}
\item $\LL_{A^1}$;
\item $\LL_{\{\ft\} \cup A^1}$;
\item $\LL_{A^1\cup \{\fm\}}$;
\item $\LL_{\{\ft\}\cup A^1 \cup \{\fm\}}$;
\item $\LL_{\{\ft\} \cup A^1}\LL_{B^1}$ with $A^1\subsetneq B^1$;
\item $\LL_{A^1}\LL_{B^1\cup \{\fm\}}$ with $B^1\subsetneq A^1$;
\item $\LL_{\{\ft\}\cup A^1}\LL_{B^1\cup \{\fm\}}$;
\item $\LL_{\{\ft\}\cup A^1}\LL_{\{\ft\}\cup B^1 \cup \{\fm\}}$ with $B^1\subsetneq A^1$;
\item $\LL_{\{\ft\}\cup A^1 \cup \{\fm\}}\LL_{B^1\cup \{\fm\}}$ with $A^1\subsetneq B^1$;
\item $\LL_{\{\ft\}\cup A^1}\LL_{B^1}\LL_{C^1\cup \{\fm\}}$ with $A^1,C^1\subsetneq B^1$;
\item $\LL_{\{\ft\}\cup A^1}\LL_{\{\ft\}\cup B^1 \cup \{\fm\}}\LL_{C^1\cup \{\fm\}}$ with $B^1\subsetneq A^1\cap C^1$.
\end{enumerate}
We now prove that a collection of thread sets for one of the tuples as above, let us denote it $T$, uniquely individuates the iterated localization associated with it.

Case (1) can be characterized by these properties:
\begin{itemize}
\item $\{\ft\}, \{\fm\},\{\ft, \fm\}$ are not thread sets;
\item we have the following equalities of sets
\[  \{\fp \in \text{Spc}(\CT^c): \fp\neq \ft, \fm \quad \{\fp, \ft\}\in T \}=\{\fp \in \text{Spc}(\CT^c): \fp\neq \ft, \fm \quad \{\fp\}\in T \}  \]
and 
\[  \{\fp \in \text{Spc}(\CT^c): \fp\neq \ft, \fm \quad \{\fp, \fm\}\in T \}=\{\fp \in \text{Spc}(\CT^c): \fp\neq \ft, \fm \quad \{\fp\}\in T \},  \]
which exclude cases (5) and (6) respectively;
\item  finally, to distinguish it from case (10) when $A^1=C^1$, we have the equalities
\begin{align*}
\{  \fp \in \text{Spc}(\CT^c) : \fp \neq \fm, \ft \quad \{\fp, \ft\} \in T \}&= \{  \fp \in \text{Spc}(\CT^c) : \fp \neq \fm, \ft \quad \{\fp, \fm\} \in T \} \\
&=\{  \fp \in \text{Spc}(\CT^c) : \fp \neq \fm, \ft \quad \{\fp, \ft, \fm\} \in T \}. 
\end{align*}
\end{itemize}
We can individuate uniquely $A^1$ as the set of Balmer primes of length $1$ in the last equality, or more simply as $\{ \fp \in \text{Spc}(\CT^c) : \fp \neq \fm, \ft \quad \{\fp\}\in T \}$.

Cases (2) and (8) are the only ones such that $\{\ft\}$ is a thread set, while $\{\fm\}$ is not. At this point (2) can be differentiated from (8) by the equality
\[ \{\fp\in \text{Spc}(\CT^c): \fp\neq \ft, \fm \quad \{\fp\}\in T \}=\{\fp\in \text{Spc}(\CT^c): \fp\neq \ft, \fm \quad \{\fp, \fm\}\in T \} \]
and this set uniquely identifies $A^1$. For case (8) instead we have a proper inclusion 
\[ \{\fp\in \text{Spc}(\CT^c): \fp\neq \ft, \fm \quad \{\fp\}\in T \}\subsetneq\{\fp\in \text{Spc}(\CT^c): \fp\neq \ft, \fm \quad \{\fp, \fm\}\in T \}\]
and these two subsets recover $B^1$ and $A^1$ respectively.

Case (3) and (9) can be treated as (2) and (8) by inverting the role of $\ft$ and $\fm$.

Case (4) is the only one admitting as thread sets both $\{\ft\}$ and $\{\fm\}$. Here we can reconstruct $A^1$ with no ambiguity as $\{ \fp \in \text{Spc}(\CT^c) : \fp \neq \fm, \ft \quad \{\fp\}\in T \}$.

Case (5) is characterized by the properties
\begin{itemize}
\item $\{\ft, \fm\}\not \in T$;
\item the equality 
\[ \{\fp\in \text{Spc}(\CT^c) : \fp\neq \ft, \fm \quad \{\fp\}\in T\}=\{\fp\in \text{Spc}(\CT^c) : \fp\neq \ft, \fm \quad \{\fp,\fm\}\in T\}  \] 
excludes case (6);
\item if we set $A=\{ \fp \in \text{Spc}(\CT^c) : \fp \neq \fm, \ft \quad \{\fp\}\in T \}$ and $B=\{ \fp \in \text{Spc}(\CT^c) : \fp \neq \fm, \ft \quad \{\fp, \ft \}\in T \}$, we have a proper inclusion $A \subsetneq B$, so we can exclude case (1) and case (10) when $A^1\cap C^1=C^1$;
\item we have the equality
\[ \{  \fp \in \text{Spc}(\CT^c) : \fp \neq \fm, \ft \quad \{\fp, \ft\} \in T \} =\{  \fp \in \text{Spc}(\CT^c) : \fp \neq \fm, \ft \quad \{\fp, \ft, \fm\} \in T \} \]
to distinguish it from the case (10) when $A^1\cap C^1 \subsetneq C^1$; 
\end{itemize}
Furthermore, the above sets $A, B$ individuate respectively the sets $A^1, B^1$ without ambiguity.

Case (6) can be treated as case (5) after inverting the role of $\ft$ and $\fm$.

Case (7) and (11) are the only ones with $\{\fm, \ft\}\in T$ but $\{\ft\}, \{\fm\}\not \in T$. We can differentiate them by the following fact: we define the sets
\begin{align*}
 D&=\{\fp\in \text{Spc}(\CT^c): \fp\neq \ft,\fm \quad \{\fm,\fp\}\in T\} \\
 E&=\{\fp\in \text{Spc}(\CT^c): \fp\neq \ft,\fm \quad \{\ft,\fp\}\in T\} \\
 F&=\{\fp\in \text{Spc}(\CT^c): \fp\neq \ft,\fm \quad \{\fp\}\in T\}.
\end{align*}
Then, in case (7) we must have $D\cap E=F$, while in case (11) the proper inclusion $F\subsetneq D\cap E$ holds.
In case (7) the sets $D$ and $E$ identify uniquely the sets $A^1$ and $B^1$ respectively. While for case (11) $D$ recovers $A^1$, $F$ recovers $B^1$ and finally $E$ recovers $C^1$.

Finally, we can characterize case (10) by the facts that $\{\ft, \fm\}\not \in T$ and that if we define $B=\{\fp \in \text{Spc}(\CT^c) : \fp\neq \fm, \fp \quad \{\fp, \fm,\ft\}\in T\}$, $A=\{ \fp \in \text{Spc}(\CT^c) : \fp \neq \fm, \ft \quad \{\fp, \fm \}\in T \}$ and $C=\{ \fp \in \text{Spc}(\CT^c) : \fp \neq \fm, \ft \quad \{\fp, \ft \}\in T \}$ we have proper inclusions $A\subsetneq B$ and $C\subsetneq B$, which distinguish this case from (1), (5) and (6). 
\end{proof}

\begin{theorem}\label{thm-conjtruedim2irred-sectionlowdim}
Let $\CT$ be a stratified tensor triangulated category with noetherian Balmer spectrum of dimension 2. Suppose the Balmer spectrum has finitely many minimal primes. Then, Conjecture~\ref{conj} holds true for $\CT$.
\end{theorem}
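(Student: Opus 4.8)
The plan is to bootstrap from the doubly irreducible case, Proposition~\ref{prop-conjtruedim2irredonemin-sectionlowdim}, by an induction analogous to the one used for dimension $1$ in Theorem~\ref{thm-conjtruedim1-sectionlowdim}. By Corollary~\ref{cor-finmanyirrednoethsectionlowdim} the Balmer spectrum has finitely many maximal primes, and by hypothesis finitely many minimal primes, so I would induct on the pair (number of minimal primes, number of maximal primes), the base case being a unique maximal and a unique minimal prime, which is Proposition~\ref{prop-conjtruedim2irredonemin-sectionlowdim} (or a statement of lower dimension already established). Two standing facts are used throughout: an iterated localization $\LL_{\LA}$ is exact, hence preserves finite homotopy limits; and passing to $L_Z\CT$ for $Z$ a complement of a Thomason subset keeps the Balmer spectrum noetherian and the category stratified (as recalled inside the proof of Lemma~\ref{lem-conjecturereducedthomasoncomplements-sectionlowdim}), while the hypothesis $T(\LA)=T(\LB)$ is inherited by any tuples obtained by appending a fixed subset or by intersecting each $A_i$ with a fixed subset, since the thread-set operator is assembled from the $\ast$-product of Section~\ref{section-finitecase}.

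The reduction of the number of minimal primes is the easy half. If $\fm_1,\dots,\fm_s$ are the minimal primes and $s\ge 2$, the complements of the Thomason subsets $[\not\supseteq\fm_1]$ and $[\not\supseteq\{\fm_2,\dots,\fm_s\}]$, i.e. the up-sets $[\supseteq\fm_1]$ and $[\supseteq\{\fm_2,\dots,\fm_s\}]$, cover $\text{Spc}(\CT^c)$, since every prime lies above a minimal prime. Being up-sets, their maximal (resp.\ minimal) primes are exactly those of $\text{Spc}(\CT^c)$ lying in them, so $L_{[\supseteq\fm_1]}\CT$ has the single minimal prime $\fm_1$ and $L_{[\supseteq\{\fm_2,\dots,\fm_s\}]}\CT$ has the $s-1$ minimal primes $\fm_2,\dots,\fm_s$, with the same maximal primes; and their intersection consists of primes lying above two distinct minimal primes, hence of primes of length $\ge 1$, so that (the ambient dimension being $2$) this subspace has dimension $\le 1$. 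Applying Lemma~\ref{lem-conjecturereducedthomasoncomplements-sectionlowdim} to this two-element cover then reduces the claim for $\CT$ to: the claim for $L_{[\supseteq\fm_1]}\CT$ (a unique minimal prime), for $L_{[\supseteq\{\fm_2,\dots,\fm_s\}]}\CT$ (fewer minimal primes --- inductive hypothesis), and for the intersection (dimension $\le 1$ --- Theorem~\ref{thm-conjtruedim1-sectionlowdim}). So it remains to treat the case of a unique minimal prime $\fm$.

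In the unique-minimal-prime case Lemma~\ref{lem-conjecturereducedthomasoncomplements-sectionlowdim} is useless, because the only complement of a Thomason subset containing $\fm$ is $\text{Spc}(\CT^c)$ itself. Instead I would apply $\LL_{\LA}$ to the arithmetic fracture square attached to the Thomason subset $\{\fm\}$ --- the homotopy pullback of $X\to L_{\{\fm\}^c}X$ along $\Lambda_{\{\fm\}}X\to L_{\{\fm\}^c}\Lambda_{\{\fm\}}X$ --- exhibiting $\LL_{\LA}X$ as a homotopy pullback of $\LL_{\LA}\LL_{\{\fm\}^c}X$, $\LL_{\LA}\LL_{\{\fm\}^c}\LL_{\{\fm\}}X$ and $\LL_{\LA}\LL_{\{\fm\}}X$. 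The first two terms only involve localizations on $L_{\{\fm\}^c}\CT$, whose Balmer spectrum $\text{Spc}(\CT^c)\setminus\{\fm\}$ has dimension $\le 1$ (any chain avoiding $\fm$ consists of primes of length $\ge 1$): by Proposition~\ref{prop-LA=LdeltaA-sectionreduction} (appending the up-set $\{\fm\}^c$) and Theorem~\ref{thm-conjtruedim1-sectionlowdim} they depend, up to canonical isomorphism, only on $T(\LA)$, and the second term moreover receives the same argument $\LL_{\{\fm\}^c}\LL_{\{\fm\}}X$ for $\LA$ and for $\LB$. This leaves the genuinely hard corner, the ``$\fm$-local'' part $\LL_{\LA}\LL_{\{\fm\}}$, in which $\LL_{\{\fm\}}=\Lambda_{\{\fm\}}$ is a completion and so does not lower the dimension. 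Here I would argue by hand, extending the case analysis of Proposition~\ref{prop-conjtruedim2irredonemin-sectionlowdim} and the fracture-square manipulations of Lemma~\ref{lem-specialcesesconjdim2-sectionlowdim}: list the reduced collapsed concatenated forms of $\LL_{\LA}\LL_{\{\fm\}}$ in the presence of several maximal primes $\ft_1,\dots,\ft_r$ above $\fm$, and check that each form is pinned down by its collection of thread sets.

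The main obstacle is precisely this last step: the generalization-closure cover that removes extra minimal primes is no longer available once the minimal prime is unique, so the reduction of the number of maximal primes must be carried out inside the completion $\Lambda_{\{\fm\}}\CT$, by an explicit extension of Proposition~\ref{prop-conjtruedim2irredonemin-sectionlowdim} from one maximal prime to arbitrarily many. Everything else is routine bookkeeping: one verifies that each auxiliary category $L_Z\CT$ occurring in the cubes meets the running hypotheses --- noetherian Balmer spectrum together with either finitely many minimal primes or dimension $\le 1$ --- that the finite intersections which arise are handled by the finite case (Corollary~\ref{cor-conjturefinitecase-sectionfinitecase}), and that the isomorphisms produced at the various vertices are compatible in the sense of Remark~\ref{rmk-isoconjcanonical-sectionbasics}, so that they glue to the desired canonical isomorphism $\LL_{\LA}\cong\LL_{\LB}$.
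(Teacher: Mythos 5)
Your proposal shares the paper's overall skeleton --- covering $\text{Spc}(\CT^c)$ by cofamilies and invoking Lemma~\ref{lem-conjecturereducedthomasoncomplements-sectionlowdim} repeatedly, bottoming out at Proposition~\ref{prop-conjtruedim2irredonemin-sectionlowdim}, Theorem~\ref{thm-conjtruedim1-sectionlowdim} and Corollary~\ref{cor-conjturefinitecase-sectionfinitecase} --- but the hand-off once the minimal prime is unique is handled differently. The paper inducts on the number $n$ of length-$2$ primes $\ft_1,\dots,\ft_n$: the base case $n=1$ is reduced via the cover by $Z_i=[\supseteq\fm_i]$ over the minimal primes, and the induction step covers by the cofamilies $U_i=[\not\subseteq\ft_i]$, each of which drops $\ft_i$ and so has fewer length-$2$ primes, letting the inductive hypothesis and Lemma~\ref{lem-conjecturereducedthomasoncomplements-sectionlowdim} do the work. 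You instead fracture along the Thomason subset $\{\fm\}$ and isolate the difficulty in the corner $\LL_{\LA}\Lambda_{\{\fm\}}$.

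The gap you flag is genuine and not merely bookkeeping. Because $\Lambda_{\{\fm\}}$ is a non-smashing completion it does not shrink $\text{Spc}(\CT^c)$, so the ``explicit extension of Proposition~\ref{prop-conjtruedim2irredonemin-sectionlowdim} to arbitrarily many maximal primes'' that you defer to is exactly what is missing: with several $\ft_j$'s the enumeration of collapsed concatenated forms grows without an evident uniform pattern, and nothing in Sections~\ref{section-firstreductions}--\ref{section-finitecase} gives a further reduction once $\Lambda_{\{\fm\}}$ has been applied. Be aware, moreover, that the paper's own induction step is delicate at precisely the same spot: invoking Lemma~\ref{lem-conjecturereducedthomasoncomplements-sectionlowdim} there requires $\bigcup_i U_i=\text{Spc}(\CT^c)$, and this covering condition fails as soon as some prime lies in every $\overline{\{\ft_i\}}$ --- in particular whenever $\fm$ is the unique minimal prime, since then $\fm\subseteq\ft_i$ for all $i$ so $\fm\notin U_i$ for any $i$. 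So the obstacle you isolated is real and not an artifact of your route; to complete the argument you would need to actually carry out the maximal-prime reduction after completion, or find a different fracture that still covers $\fm$, rather than appeal to the lower-dimensional results.
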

\begin{proof}
By Corollary~\ref{cor-finmanyirrednoethsectionlowdim}, the Balmer spectrum  has finitely many primes of length 2, we denote them by $\ft_1, \ft_2, \dots, \ft_n$. We argue by induction on $n$.

Let us begin with the starting case $n=1$. If $\fm_1,\dots, \fm_k$ are the minimal Balmer primes, we define $Z_i=[\supseteq \fm_i]$ for $1\leq i \leq k$ to be the cofamilies generated by them. We observe that for $L_{Z_i}\CT$ the conjecture holds by Theorem~\ref{thm-conjtruedim1-sectionlowdim} and Proposition~\ref{prop-conjtruedim2irredonemin-sectionlowdim}. If instead $Z$ is the intersection of two or more or the above $Z_i$'s, then it must have dimension $0$ or $1$. Thus, the conjecture holds for $L_Z\CT$ by Proposition~\ref{prop-conjtruedim0-sectionlowdim} or Theorem~\ref{thm-conjtruedim1-sectionlowdim}.

Therefore, Lemma~\ref{lem-conjecturereducedthomasoncomplements-sectionlowdim} shows the conjecture is true in the starting case.

Now we deal with the induction step. Suppose $n>1$, then for any $1 \leq i \leq n$ we can define the subset $U_i=[\not \subseteq \ft_i]$ of $\text{Spc}(\CT^c)$ which is clearly upward closed, hence the complement of a Thomason subset by noetherianity. Each $U_i$ is a spectral space of dimension 2 with finitely many minimal elements and with a number of length 2 elements strictly less than $n$. Thus, by inductive assumption the conjecture holds for $L_{U_i}\CT$. If $U$ is the intersection of two or more of these $U_i$'s, then either it has dimension 2 but less than $n$ elements with length 2 or it has dimension strictly less than 2. In the former case the conjecture for $L_U\CT$ holds again by inductive assumption, in the latter case it holds by Proposition~\ref{prop-conjtruedim0-sectionlowdim} and Theorem~\ref{thm-conjtruedim1-sectionlowdim}. Therefore, we can use Lemma~\ref{lem-conjecturereducedthomasoncomplements-sectionlowdim} to conclude.
\end{proof}

\begin{example}
Let us consider $\CT=\mathbb{T}^2 \da \CS p_{\LQ}$, the rational stable homoropy category equivariant with respect to the torus $\mathbb{T}^2=S^1\times S^1$. We saw in Example~\ref{ex-GSpQstratified-sectionbalmer} that this is a stratified category, whose Balmer spectrum is homeomorphic to $\Gamma \mathbb{T}^2$ with the $zf$-topology.

By \cite[Rmk.~6.2]{gre-rationalbalmer} and \cite[Lemma~6.3]{gre-rationalbalmer} all primes of $\text{Spc}(\mathbb{T}^2 \da \CS p_{\LQ}^c)$ are visible, thus this is a noetherian space. 

A priori $\Gamma \mathbb{T}^2$ has infinitely many minimal points, corresponding to the finite subgroups of $\mathbb{T}^2$. However, if we limit ourselves to consider only finitely many of these finite subgroups and the closed subgroups of $\mathbb{T}^2$ cotorally containing them, we can apply Theorem~\ref{thm-conjtruedim2irred-sectionlowdim}.

There exist countably many subgroups isomorphic to the circle $S^1$, we label these by $S_i$ for $i\in \mathbb{N}$. Clearly, if $e$ denotes the trivial subgroup, we have $S_i\geq_{ct}e$. For any $i$ we can find a finite subgroup $F_i$ such that $S_i\geq_{ct}F_i$ and $S_i\not \geq_{ct}F_j$ for $i\neq j$.

Let us fix a natural number $n$. We define the following subsets of the Balmer spectrum
\begin{align*}
A_0&=\{S_i : i\in \mathbb{N}\} \\
A_j&=\{S_i : i\geq j  \} \cup \{F_i : i<j\} \cup \{e\}\qquad 1\leq j \leq n \\
A_{n+1}&=\{F_i : 0\leq i \leq n\} \cup \{e\}.
\end{align*}
Then, the tuple $\LA=(A_0, A_1,\dots, A_{n+1})$ is collapsed and concatenated, but we have a further reduction
\[ \LL_{\LA}\cong \LL_{A_0}\LL_{A_{n+1}} \]
since $T(\LA)=T(A_0, A_{n+1})$.
\end{example}

\begin{bibdiv}
\begin{biblist}
  
\bib{balmer-spectrumprimes}{article}{
	author={Balmer, P.},
	title={The spectrum of prime ideals in tensor triangulated categories},
	year={2005},
	journal={Journal für die reine und angewandte Mathematik},
	volume={588},
	pages={149-168},
}  

\bib{balmer-suppfiltr}{article}{
	author={Balmer, P.},
	title={Supports and filtrations in algebraic geometry and modular representation theory},
	year={2007},
	journal={American Journal of Mathematics},
	pages={1227-1250},
	volume={129},
	number={5},

}

\bib{balmerfavi}{article}{
	author={Balmer, P.},
	author={Favi, G.},
	year={2011},
	title={Generalized tensor idempotents and the telescope conjecture},
	journal={Proceedings of the London Mathematical Society},
	volume={102},
	number={6},
	pages={1161–1185},
}

\bib{barnes-rationalO2equiv}{article}{
	author={Barnes, D.},
	title={Rational $O(2)$-Equivariant Spectra},
	journal={Homology, Homotopy and Applications},
	volume={19},
	number={1},
	pages={225–252},
	year={2017},
}

\bib{BHS-stratification}{article}{
	author={Barthel, T.},
	author={Heard, D.},
	author={Sanders, B.},
	title={Stratification in tensor triangular geometry with applications to spectral Mackey functors},
	date={2023},
	journal={Cambridge Journal of Mathematics},
	volume={11},
	number={4},
	pages={829-915},
}

\bib{itloc}{article}{
  author={Bellumat, N.},
  author={Strickland, N. P.}
  title={Iterated chromatic localisation},
  journal={arXiv:1907.07801},
  date={2019},
  }
  
\bib{thesis}{article}{
  author={Bellumat, N.},
  title={Finiteness of compositions of localizations via fracture diagrams},
  journal={uk.bl.ethos.842794},
  date={2021},
  }

\bib{bik08}{article}{
	author={Benson, D.},
	author={Iyengar, S. B.},
	author={Krause, H.},
	title={Local cohomology and support for triangulated categories},
	journal={Annales scientifiques de l'École Normale Supérieure},
	date={2008},
	pages={575-621},
	volume={41},
	number={4},
}

\bib{bik}{article}{
	author={Benson, D.},
	author={Iyengar, S. B.},
	author={Krause, H.},
	title={Stratifying triangulated categories},
	journal={Journal of Topology},
	date={2011},
	pages={641-666},
	volume={4},
	number={3},
}

    \bib{spespa}{book}{
  author={Dickmann, M.},
  author={Schwartz, N.},
  author={Tressl, M.},
  title={Spectral spaces},
  date={2019},
  place={Cambridge}, 
  series={New Mathematical Monographs},
  DOI={10.1017/9781316543870}, 
  publisher={Cambridge University Press},
  collection={New Mathematical Monographs}
  }

\bib{ekmm}{book}{
	author={Elmendorf, A. D.},
	author={Kriz, I.},
	author={Mandell, M. A.},
	author={May, J.P.},
	title={Rings, Modules, and Algebras in Stable Homotopy Theory},
	publisher={American Mathematical Society},
	year={1997},	
	series={Mathematical Surveys and Monographs},
	volume={47}
	pages={249},
}
  
  \bib{gre-rationals1equiv}{article}{
    author={Greenlees, J. P. C.},
     title={Rational $S^1$-Equivariant Stable Homotopy Theory},
      ISSN={0065-9266},
   journal={Memoirs of the American Mathematical Society},
    volume={138},
    date={1999},
    number={611},
     pages={293},
    }

\bib{gre-rationalbalmer}{article}{
	author={Greenlees, J. P. C.},
	title={The Balmer spectrum of rational equivariant cohomoloy theories},
	year={2019},
	journal={Journal of Pure and Applied Algebra},
	volume={223 (7)},
	pages={2845-2871},
	ISSN={0022-4049},
}

\bib{groth-pointed-der}{article}{
    author={Groth, M.},
     title={Derivators, pointed derivators, and stable derivators},
   journal={Algebraic \& Geometric Topology},
    volume={13},
    date={2013},
     pages={313-374},
     number={1},
    }  

\bib{hirch}{book}{
	author={Hirschhorn, P. S.},
	title={Model Categories and Their Localizations},
	volume={99},
	publisher={American Mathematical Society},
	year={2003},	
	series={Mathematical Surveys and Monographs},
	pages={457},
}
    
    \bib{hovey-bousfield-loc-splitting-conj}{inproceedings}{
    author={Hovey, M.},
    title={Bousfield localization functors and Hopkins' chromatic
          splitting conjecture},
    booktitle={Proceedings of the Cech Centennial Conference on Homotopy Theory},
    volume={181},
    date={1993},
    pages={225-250},
    }  
    
\bib{hovey-model}{book}{
	author={Hovey, M.},
	title={Model Categories},
	volume={63},
	publisher={American Mathematical Society},
	year={1999},	
	series={Mathematical Surveys and Monographs},
	pages={209},
}        

\bib{hopast:ash}{book}{
    author={Hovey, M.},
    author={Palmieri, J. H.},
    author={Strickland, N. P.},
     title={Axiomatic stable homotopy theory},
      ISSN={0065-9266},
   series={Memoirs of the American Mathematical Society},
    volume={128},
    date={1997},
    number={610},
     pages={x+114},
    }
    
\bib{host-loc}{book}{
    author={Hovey, M.},
    author={Strickland, N. P.},
     title={Morava K-theories and Localisation},
      ISSN={0065-9266},
   series={Memoirs of the American Mathematical Society},
    volume={139},
    date={1999},
    number={666},
     pages={viii+100},
    }

\bib{IK13}{article}{
	author={Iyengar, S. B.},
	author={Krause, H.},
	title={The Bousfield lattice of a triangulated category and stratification},
	year={2013},
	journal={Mathematische Zeitschrift},
	volume={273},
	number={3-4},
	pages={1215-1241},
}

\bib{LMSMcC-equivstabhomth}{book}{
	author={Lewis Jr., L. G.},
	author={May, J. P.},
	author={Steinberg, M.},
	author={McClure, J. E.},
	title={Equivariant Stable Homotopy Theory},
	volume={1213},
	publisher={Springer-Verlag, Berlin},
	year={1986},	
	series={Lecture Notes in Mathematics},
}

\bib{highertopos}{book}{
	author={Lurie, J.},
	title={Higher Topos Theory},
	publisher={Princeton University Press},
	year={2009},	
	series={Annals of Mathematics Studies},
	pages={944},
}


\bib{equivorthspe}{book}{
	author={Mandell, M.A.},
    author={May, J. P.},
	title={Equivariant Orthogonal Spectra and $S$-Modules},
	volume={159},
	publisher={American Mathematical Society},
	year={2002},	
	series={Memoirs of the American Mathematical Society},
	pages={108},
}  

\bib{neeman-tower}{article}{
	author={Neeman, A.},
	title={The chromatic tower for $D(R)$},
	year={1992},
	journal={Topology},
	volume={31},
	number={3},
	pages={519-532},
}

\bib{thomason-classification}{article}{
	author={Thomason, R. W.},
	title={The classification of triangulated subcategories},
	journal={Compositio Mathematica},
	date={1997},
	pages={1-27},
	volume={105},
	number={1}
}

\end{biblist}
\end{bibdiv}

\end{document}